\newtheorem{thm}{Theorem}[section]
\newtheorem{prp}[thm]{Proposition}
\newtheorem{lem}[thm]{Lemma}
\newtheorem{cor}[thm]{Corollary}
\newtheorem{con}[thm]{Question}
\newtheorem*{con*}{Question}
\theoremstyle{definition}
\newtheorem{dfn}[thm]{Definition}
\newtheorem{rmk}[thm]{Remark}
\numberwithin{equation}{section}
\newcommand{\Z}{{\mathbb Z}}
\newcommand{\R}{{\mathbb R}}
\newcommand{\N}{{\mathbb N}}
\newcommand{\Y}{{\mathbb Y}}
\newcommand{\A}{{\mathbb A}}
\newcommand{\C}{{\mathbb C}}
\newcommand{\df}{{\mathfrak d}}
\newcommand{\Df}{{\mathfrak D}}
\newcommand{\op}[1]{\operatorname{#1}}
\newcommand{\NN}{{\mathcal N}}
\newcommand{\OO}{{\mathcal O}}
\newcommand{\WW}{{\mathcal W}}
\newcommand{\ZZ}{{\mathcal Z}}
\newcommand{\dist}{{\mathrm{dist}}}
\newcommand{\st}{{\mathrm{st}}}
\newcommand{\rep}{{\mathrm{rep}}}
\newcommand{\om}{\mathrm{\omega}}
\newcommand{\Vol}{\mathrm{Vol}}
\newcommand{\Fix}{\mathrm{Fix\,}}
\newcommand{\Crit}{\mathrm{Crit\,}}
\newcommand{\CAL}{\mathrm{CAL}}
\newcommand{\id}{\mathrm{id}}
\newcommand{\di}{{\mathrm d}}
\newcommand{\ta}{{\mathrm T}}
\newcommand{\dR}{{\mathrm{dR}}}
\newcommand{\p}{\partial}
\newcommand{\into}{\hookrightarrow}
\newcommand{\x}{\times}
\newcommand{\wh}{\widehat}
\newcommand{\beq}{\begin{equation}}
\newcommand{\beqn}{\begin{equation}\nonumber}
\newcommand{\eeq}{\end{equation}}
\newcommand{\bea}{\begin{equation}\begin{aligned}}
\newcommand{\bean}{\begin{equation}\begin{aligned}\nonumber}
\newcommand{\eea}{\end{aligned}\end{equation}}
\noindent\textsc{Seoul National University, Department of Mathematical Sciences,  Research institute in Mathematics, Gwanak-Gu, 
	Seoul 08826, South Korea} \par  
\title{A local contact systolic inequality in dimension three}
\author{Gabriele Benedetti and Jungsoo Kang}
\begin{document}
\maketitle
\begin{abstract}
Let $\alpha$ be a contact form on a connected closed three-manifold $\Sigma$. The systolic ratio of $\alpha$ is defined as $\rho_{\mathrm{sys}}(\alpha):=\tfrac{1}{\Vol(\alpha)}T_{\min}(\alpha)^2$, where $T_{\min}(\alpha)$ and $\Vol(\alpha)$ denote the minimal period of periodic Reeb orbits and the contact volume. The form $\alpha$ is said to be Zoll if its Reeb flow generates a free $S^1$-action on $\Sigma$. We prove that the set of Zoll contact forms on $\Sigma$ locally maximises the systolic ratio in the $C^3$-topology. More precisely, we show that every Zoll form $\alpha_*$ admits a $C^3$-neighbourhood $\mathcal U$ in the space of contact forms such that, for every $\alpha\in\mathcal U$, there holds $\rho_{\mathrm{sys}}(\alpha)\leq \rho_{\mathrm{sys}}(\alpha_*)$ with equality if and only if $\alpha$ is Zoll.
\end{abstract}

\tableofcontents
\section{Introduction}
Let $\Sigma$ be a connected closed manifold of dimension $2n+1$ and let $\mathcal C(\Sigma)$ be the set of contact forms on it. Namely, the elements $\alpha\in\mathcal C(\Sigma)$ are one-forms on $\Sigma$ such that the $(2n+1)$-form $\alpha\wedge(\di\alpha)^{n}$ is nowhere vanishing. This property implies that there exists a unique vector field $R_\alpha$ on $\Sigma$ determined by the relations $\di\alpha(R_\alpha,\,\cdot\,)=0$ and $\alpha(R_\alpha)=1$.\label{def:reeb} The vector field $R_\alpha$ is called the Reeb vector field and the associated flow $\Phi^\alpha$ the Reeb flow. Periodic orbits of $\Phi^\alpha$ \label{def:phialpha} are fundamental objects in contact and symplectic geometry. The Weinstein conjecture, which asserts that every contact form on a closed manifold possesses at least one periodic orbit \cite{Wei79}, has played a prominent role in the field. The conjecture has been established in many particular situations, most notably when $\Sigma$ is three-dimensional \cite{Tau}. In these cases a more refined question arises: What can be said about the period of the orbits that one finds? A natural problem is, namely, to give an explicit upper bound on $T_{\min}(\alpha)$, the minimal period of periodic orbits of $\Phi^\alpha$, in terms of some geometric quantity associated with $\alpha$. Following \cite{APB14}, we use here the contact volume (other choices are possible and can lead to different results, as in \cite{AFM16})
\begin{equation}\label{e:cvol}
\Vol(\alpha):= \int_{\Sigma} \alpha\wedge (\di\alpha)^{n}>0,
\end{equation}
and consider the \textbf{systolic ratio}
\begin{equation*}
\rho_{\mathrm{sys}}:\mathcal C(\Sigma)\to(0,\infty],\qquad \rho_{\mathrm{sys}}(\alpha):= \frac{T_{\min}(\alpha)^{n+1}}{\Vol(\alpha)}.
\end{equation*}
Inside $\mathcal C(\Sigma)$ one can consider the subset $\mathcal C(\xi)$ of contact forms defining a given co-oriented contact structure $\xi$ on $\Sigma$. This means that $\xi$ is a co-oriented hyperplane field such that $\ker\alpha=\xi$ for all $\alpha\in\mathcal C(\xi)$. Following the breakthrough result in dimension three obtained by Abbondandolo, Bramham, Hryniewicz and Salom\~{a}o in \cite{ABHSnew}, Sa\u{g}lam showed that
\[
\sup_{\alpha\in \mathcal C(\xi)}\rho_{\mathrm{sys}}(\alpha)=+\infty,
\]
i.e.\ the systolic ratio does not admit a \textbf{global upper bound} on $\mathcal C(\xi)$, for any contact contact structure $\xi$ in any dimension \cite{Sag18}.

Such bound might hold, however, if one takes a special subclass of contact forms in $\mathcal C(\xi)$. For instance, a celebrated theorem of Viterbo \cite[Theorem 5.1]{Vit00} (see also \cite{AMO08}) asserts that the systolic ratio is bounded from above on the set of contact forms on $S^{2n+1}$ arising from convex embeddings into $\R^{2(n+1)}$.
Another distinguished subclass is given by the canonical contact forms on the unit tangent bundle of closed Riemannian or Finsler manifolds. This is the setting where systolic geometry originated and has been hitherto tremendously studied (see \cite[Chapter 7.2]{Ber03}).   

In a similar vein, for a general $\Sigma$, one is led to study the local behaviour of $\rho_{\mathrm{sys}}$ around its critical set. This direction of inquiry was initiated in \cite{APB14} by \'Alvarez-Paiva and Balacheff, who showed that $\op{Crit}\rho_{\mathrm{sys}}$ is exactly the set of Zoll contact forms.
\begin{dfn}\label{dfn:zoll}
A contact form $\alpha$ on a manifold $\Sigma$ is called \textbf{Zoll} of period $T(\alpha)>0$, if the flow $\Phi^{\alpha}$ induces a free $\R/T(\alpha)\Z$-action (all orbits are periodic and have prime period $T(\alpha)$). We write $\ZZ(\Sigma)$ for the set of all Zoll contact forms on $\Sigma$.
\end{dfn}
Once a Zoll form $\alpha_*$ is given, it is easy to deform it through a path $s\mapsto \alpha_s$ of Zoll forms with $\alpha_0=\alpha_*$. We can just set $\alpha_s:=T_s\Psi_s^*\alpha_*$, where $s\mapsto\Psi_s$ is any isotopy of $\Sigma$ and $s\mapsto T_s$ a path of positive numbers.  By a theorem of Weinstein \cite{Wei74}, these represent all possible deformations of $\alpha_*$ through Zoll contact forms.

While the local structure of $\ZZ(\Sigma)$ is well understood, describing when $\ZZ(\Sigma)$ is non-empty and investigating its global structure are more subtle issues. In this regard, a classical construction by Boothby and Wang represents a useful tool \cite[Theorem 2 and 3]{BW58}. From Definition \ref{dfn:zoll}, it follows that if $\alpha_*$ is Zoll of period $1$, the quotient by the action of the Reeb flow yields an oriented $S^1$-bundle $\mathfrak p:\Sigma\to M$, where $M$ is a closed manifold of dimension $2n$ and $S^1=\R/\Z$. The Zoll contact form $\alpha_*$ becomes a connection form for $\mathfrak p$, while the two-form $\di\alpha_*$ descends to a symplectic form $\om_*$ on $M$, representing minus the Euler class of $\mathfrak p$. Vice versa, given a symplectic manifold $(M,\om_*)$ such that the cohomology class of $\om_*$ is integral, one can construct an oriented $S^1$-bundle $\mathfrak p:\Sigma\to M$ with a connection form $\alpha_*$ satisfying $\di\alpha_*=\mathfrak p^*\om_*$, so that $\alpha_*$ is a Zoll form of period $1$ on $\Sigma$.

The Boothby-Wang construction tells us exactly which connected three-manifolds admit a Zoll contact form:~they are total spaces of non-trivial oriented $S^1$-bundles over connected oriented closed surfaces. In this case, an easy topological argument shows that the diffeomorphism type of the quotient $M$ and the Euler number of $\mathfrak p$ depend only on $\Sigma$ and not on $\alpha_*$. In particular, minus the Euler number equals $|H_1^\mathrm{tor}(\Sigma;\Z)|$, namely the cardinality of the torsion subgroup of the first integral homology of $\Sigma$. Hence, as shown in \cite[Proposition 3.3]{APB14}, we have the identity
\[
\rho_{\mathrm{sys}}(\alpha_*) = \frac{1}{|H_1^\mathrm{tor}(\Sigma;\Z)|}.
\]
The next result classifies Zoll contact forms on $\Sigma$ up to diffeomorphisms and up to isotopies. When $\Sigma$ is $SO(3)$ or $S^3$, the diffeomorphism classification was carried out in \cite[Theorem B.2]{ABHS17} and \cite[Proposition 3.9]{ABHS15}.

\begin{prp}\label{p:three}
Let $\Sigma$ be the total space of a non-trivial orientable $S^1$-bundle over a connected orientable closed surface.
\begin{enumerate}
	\item If $\alpha$ and $\alpha'$ are Zoll contact forms on $\Sigma$, there is a diffeomorphism $\Psi:\Sigma\to\Sigma$ and a positive constant $T>0$ such that
	\[
	\Psi^*\alpha'=T\alpha.
	\]
	\item The space $\ZZ(\Sigma)$ has exactly two connected components.
\end{enumerate}
\end{prp}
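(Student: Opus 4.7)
The plan is a Moser-type deformation argument inside the Boothby-Wang picture recalled in the paper.

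\textbf{Part (1).} After rescaling I may assume $T(\alpha)=T(\alpha')=1$. The Reeb flows present $\Sigma$ as the total space of two principal $S^1$-bundles $\mathfrak p:\Sigma\to M$ and $\mathfrak p':\Sigma\to M'$, with symplectic curvatures $\omega,\omega'$ on the bases representing minus the Euler class. Since the diffeomorphism type of the base and the Euler number of the bundle depend only on $\Sigma$, these two bundles are isomorphic as oriented principal $S^1$-bundles, so I fix an $S^1$-equivariant diffeomorphism $\Phi_0:\Sigma\to\Sigma$ covering some $\psi_0:M\to M'$. Then $\Phi_0^*\alpha'$ is a connection form on $\mathfrak p$ with curvature $\psi_0^*\omega'$, cohomologous to $\omega$. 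Moser's theorem on the surface $M$ produces $\psi_1:M\to M$ with $\psi_1^*\psi_0^*\omega'=\omega$, and $\psi_1$ (being isotopic to the identity) lifts to a bundle automorphism $\Phi_1$ of $\mathfrak p$. Setting $\beta:=(\Phi_0\circ\Phi_1)^*\alpha'$, both $\alpha$ and $\beta$ are connection $1$-forms on $\mathfrak p$ satisfying $\di\beta=\di\alpha$ and having the same Reeb vector field, so $\beta-\alpha=\mathfrak p^*\gamma$ for some closed $\gamma\in\Omega^1(M)$. Along the affine path $\alpha_t:=\alpha+t\,\mathfrak p^*\gamma$, $t\in[0,1]$, both the Reeb field and the period stay constant, hence $\alpha_t\in\ZZ(\Sigma)$; Weinstein's theorem recalled just before the proposition then yields an isotopy $\Phi^t$ of diffeomorphisms, $\Phi^0=\id$, with $(\Phi^t)^*\alpha=\alpha_t$. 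The composition $\Psi:=\Phi_0\circ\Phi_1\circ(\Phi^1)^{-1}$ satisfies $\Psi^*\alpha'=\alpha$; multiplying by the rescaling factor yields the statement.

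\textbf{Part (2).} Consider the map $\rho:\ZZ(\Sigma)\to\mathcal R(\Sigma)$, $\alpha\mapsto R_\alpha$, into the space of vector fields on $\Sigma$ generating a free $S^1$-action. Its fibre over $R$ is the open convex subset of connection $1$-forms with symplectic curvature on the bundle $\Sigma\to\Sigma/\langle R\rangle$, hence contractible; a standard lifting argument identifies $\pi_0\ZZ(\Sigma)$ with $\pi_0\mathcal R(\Sigma)$. The Euler number of the induced $S^1$-bundle is a locally constant $\Z$-valued invariant on $\mathcal R(\Sigma)$, taking the two values $\pm|H_1^{\mathrm{tor}}(\Sigma;\Z)|$ (the sign reverses under $R\mapsto -R$, equivalently under $\alpha\mapsto-\alpha$), so $\ZZ(\Sigma)$ has at least two components. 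To conclude that there are exactly two, one invokes the essential uniqueness of the Seifert fibration on $\Sigma$: under the hypothesis that $\Sigma$ is the total space of a non-trivial orientable $S^1$-bundle, the classical results on $S^1$-actions on Seifert-fibred $3$-manifolds (Orlik--Raymond and later authors) imply that any two free $S^1$-actions on $\Sigma$ inducing bundles with the same Euler number are isotopic through free $S^1$-actions.

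The main obstacle I anticipate is this uniqueness statement for free $S^1$-actions; the rest of the argument is a routine combination of the Boothby-Wang correspondence, Moser's theorem on symplectic surfaces, and Weinstein's deformation theorem.
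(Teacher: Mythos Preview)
Your Part~(1) is essentially correct and close in spirit to the paper's Lemma~\ref{lem:Zoll_uniqueness}, though you take a slightly longer route: you first equalise the curvatures on the base via Moser's theorem on surfaces, and only then straighten the remaining closed basic one-form. The paper instead runs a single Moser/Gray argument on $\Sigma$ along the full path $\alpha_u=\alpha+u\,\mathfrak p^*\eta$, observing that the curvatures $\omega_u=(1-u)\omega+u\omega_1$ stay symplectic because the induced base map is orientation-preserving; this avoids your extra step. Also, your invocation of ``Weinstein's theorem'' to produce the isotopy $\Phi^t$ is a bit circular: the statement you actually need (a path of Zoll forms with constant period is realised by pullback along an isotopy) is exactly the Gray-type argument the paper carries out explicitly, and Weinstein's result in \cite{Wei74} is really an infinitesimal statement. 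So Part~(1) works, but the direct Moser argument is cleaner and self-contained.

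Part~(2) has a genuine gap at precisely the point you flag. Reducing $\pi_0\ZZ(\Sigma)$ to $\pi_0\mathcal R(\Sigma)$ is fine (the fibres are indeed convex, and a path of free $S^1$-actions can be lifted using equivariant isotopies), and the Euler-number invariant correctly separates at least two components. But the hard content is that $\mathcal R(\Sigma)$ has \emph{at most} two components, and ``Orlik--Raymond and later authors'' does not cover this: their work classifies $S^1$-actions up to equivariant diffeomorphism, not up to isotopy through free actions. The paper treats this case by case with the appropriate deep theorems: for $M\neq S^2$ it uses Waldhausen's rigidity for Seifert fibrations \cite[Satz~5.5]{Wal67}; for lens spaces it uses the computation of mapping class groups due to Bonahon \cite{Bon83} and Hodgson--Rubinstein \cite{HR85} (and Cerf \cite{Cer68} for $S^3$). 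In the $S^3$ and $\R\mathbb P^3$ cases the two components are in fact not distinguished by $\bm\xi\mapsto-\bm\xi$ but by an orientation-reversing diffeomorphism, and the paper further invokes Massot \cite{Mas08} to show the two isotopy classes of contact structures are genuinely distinct when $\Sigma\neq S^3,\R\mathbb P^3$. Without naming and using these specific results, your Part~(2) remains a sketch rather than a proof.
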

We provide a proof of the proposition together with a detailed description of the connected components of $\ZZ(\Sigma)$ in Section \ref{sec:class}.

Actually, the results in \cite{APB14} go beyond the characterisation of $\op{Crit}\rho_{\mathrm{sys}}$ and imply that if $s\mapsto\alpha_s$ is a smooth deformation of $\alpha_*\in\ZZ(\Sigma)$ with $\alpha_0=\alpha_*$, then $s\mapsto\rho_{\mathrm{sys}}(\alpha_s)$ attains a strict maximum at $0$, provided the deformation is not tangent in $s=0$ to all orders to $\ZZ(\Sigma)$. On the other hand, by Weinstein's theorem, if the deformation is contained in $\ZZ(\Sigma)$, then $\rho_{\mathrm{sys}}(\alpha_s)=\rho_{\mathrm{sys}}(\alpha_*)$ for all $s$. As communicated to us by the authors, an implicit goal in \cite{APB14} was to answer the following question on a sharp \textbf{local upper bound} for $\rho_{\mathrm{sys}}$.
\begin{con}[Local contact systolic inequality]\label{con:lsi}Let $\alpha_*$ be a Zoll contact form on a connected closed manifold $\Sigma$ of dimension $2n+1$ and let $k\geq 0$ be an integer. Does there exist a $C^k$-neighbourhood $\mathcal U$ of $\alpha_*$ in the set of contact forms on $\Sigma$ such that
\[
\rho_{\mathrm{sys}}(\alpha)\ \leq\ \rho_{\mathrm{sys}}(\alpha_*),\qquad \forall\, \alpha\in\mathcal U
\]
and the equality holds if and only if $\alpha$ is a Zoll form?
\end{con}
For Zoll Riemannian metrics on a compact rank one symmetric space, an analogous question was formulated in \cite{Bal,APB14}. This Riemannian question is answered positively for $S^2$ with  $k=2$ in \cite{ABHS17}.

In their seminal paper \cite{ABHS15}, Abbondandolo, Bramham, Hryniewicz, and Salom\~ao give a positive answer to Question \ref{con:lsi} with $k=3$ when $\Sigma$ is the three-sphere (or more generally, by means of a simple covering argument, when the base $M$ is the two-sphere). Moreover, they give a negative answer to the question in dimension three, if one replaces the $C^k$-closeness of contact forms with the $C^0_{\mathrm{loc}}$-closeness of the Reeb flows.

In the present paper, building on their beautiful result, we answer Question \ref{con:lsi} affirmatively with $k=3$ for every closed three-manifold admitting a Zoll contact form (so, compared with \cite{ABHS15}, here the base $M$ can be an arbitrary orientable closed surface), including a statement regarding the \textbf{diastolic ratio} 
\begin{equation*}\label{e:dia}
\rho_{\mathrm{dia}}(\alpha):=\frac{T_{\max}(\alpha)^{n+1}}{\Vol(\alpha)},
\end{equation*}
where $T_{\max}(\alpha)$ is the maximal period of \textit{prime} periodic orbits of $\Phi^\alpha$. If $\alpha$ is Zoll, then there holds $T_{\min}(\alpha)=T(\alpha)=T_{\max}(\alpha)$ so that $\rho_{\mathrm{sys}}(\alpha)=\rho_{\mathrm{dia}}(\alpha)$. To get a stronger result, for every free-homotopy class of loops $\mathfrak h$ in $\Sigma$, we also define the minimal and maximal period of prime periodic orbits of $\Phi^\alpha$ in the class $\mathfrak h$ and we denote them by $T_{\min}(\alpha,\mathfrak h)$ and $T_{\max}(\alpha,\mathfrak h)$, respectively. Finally, we write $\rho_{\mathrm{sys}}(\alpha,\mathfrak h)$ and $\rho_{\mathrm{dia}}(\alpha,\mathfrak h)$ for the corresponding systolic and diastolic ratios. Clearly, $\rho_{\mathrm{sys}}(\alpha)\leq\rho_{\mathrm{sys}}(\alpha,\mathfrak h)\leq \rho_{\mathrm{dia}}(\alpha,\mathfrak h)\leq\rho_{\mathrm{dia}}(\alpha)$.
\begin{thm}\label{t:main}
Let $\alpha_*$ be a Zoll contact form on a connected closed three-manifold $\Sigma$, and let  $\mathfrak h$ be the free-homotopy class of the prime periodic orbits of $\Phi^{\alpha_*}$. There exists a $C^2$-neighbourhood $\mathcal U$ of $\di\alpha_*$ in the space of exact two-forms on $\Sigma$ such that, for every contact form $\alpha$ on $\Sigma$ with $\di\alpha\in \mathcal U$, we have
\begin{equation*}
\rho_{\mathrm{sys}}(\alpha,\mathfrak h) \leq \frac{1}{|H_1^\mathrm{tor}(\Sigma;\Z)|} \leq \rho_{\mathrm{dia}}(\alpha,\mathfrak h)
\end{equation*}
and any of the two equalities holds if and only if $\alpha$ is Zoll. In particular, Zoll contact forms are strict local maximisers of the systolic ratio in the $C^3$-topology.
\end{thm}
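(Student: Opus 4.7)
My plan is to adapt the proof of Abbondandolo--Bramham--Hryniewicz--Salom\~{a}o~\cite{ABHS15} for $S^3$ to a general Boothby--Wang bundle $\mathfrak p\colon\Sigma\to M$. After rescaling so that $T(\alpha_*)=1$ (via Proposition~\ref{p:three}), $\alpha_*$ is a connection form on $\mathfrak p$ with $\di\alpha_*=\mathfrak p^*\om_*$ and $\int_M\om_*=|H_1^\mathrm{tor}(\Sigma;\Z)|$, so $\Vol(\alpha_*)=|H_1^\mathrm{tor}(\Sigma;\Z)|$. I would first sharpen the hypothesis: the $C^2$-closeness of $\di\alpha$ to $\di\alpha_*$ can be upgraded (without loss of generality) to $C^3$-closeness of $\alpha$ to $\alpha_*$. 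Indeed, adding a closed 1-form $c$ to $\alpha$ leaves $\Vol(\alpha)$ invariant (by Stokes) and merely reparametrises Reeb orbits, so that the period in class $\mathfrak h$ shifts by $\int_{\mathfrak h} c$. Since the fibre class $\mathfrak h$ is torsion in $H_1(\Sigma;\Z)$, it vanishes in $H_1(\Sigma;\R)$, so $\int_{\mathfrak h} c=0$ for every closed $c$; Hodge-decomposing $\alpha-\alpha_*$ and discarding its closed (exact\,+\,harmonic) part leaves a coexact remainder whose $C^3$-norm is controlled by $\|\di\alpha-\di\alpha_*\|_{C^2}$ via elliptic regularity.

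Next, I would construct a Birkhoff-type surface of section $\mathcal D\subset\Sigma$ for $\Phi^\alpha$ adapted to the bundle. Starting with $\alpha_*$: pick $p\in M$, trivialise $\mathfrak p$ over $M\setminus\{p\}$, and lift to an embedded surface $\mathcal D_*\subset\Sigma$ transverse to $R_{\alpha_*}$ whose topological closure has boundary the fibre over $p$ traversed $|H_1^\mathrm{tor}(\Sigma;\Z)|$ times (the Euler number of $\mathfrak p$). Fubini gives $\int_{\mathcal D_*}\di\alpha_*=\int_M\om_*=|H_1^\mathrm{tor}(\Sigma;\Z)|$, matching the period of the multiply-covered binding. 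For $\alpha$ close to $\alpha_*$ in $C^3$, the fibre-return map over $p$ is a $C^2$-perturbation of $\mathrm{id}_{S^1}$ and hence has a fixed point by an elementary degree argument, yielding a persistent prime Reeb orbit $\gamma_b$ in the class $\mathfrak h$ with period $T_p\approx 1$. Its $|H_1^\mathrm{tor}(\Sigma;\Z)|$-th iterate then bounds a Birkhoff section $\mathcal D$ for $\Phi^\alpha$, with symplectic first-return map $\psi\colon\mathcal D^\circ\to\mathcal D^\circ$ and first-return time $\tau$ satisfying, by Fubini and Stokes,
\begin{equation*}
\Vol(\alpha)=\int_{\mathcal D}\tau\,\di\alpha,\qquad \int_{\mathcal D}\di\alpha=|H_1^\mathrm{tor}(\Sigma;\Z)|\,T_p.
\end{equation*}

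The proof then closes by combining two estimates on $T_{\min}(\alpha,\mathfrak h)$: the binding $\gamma_b\in\mathfrak h$ of period $T_p$ gives $T_{\min}(\alpha,\mathfrak h)\leq T_p$, while the action--minimax argument of~\cite{ABHS15} applied to $\psi$ yields a periodic orbit in $\mathfrak h$ of period at most $\overline\tau:=\Vol(\alpha)/(|H_1^\mathrm{tor}(\Sigma;\Z)|\,T_p)$, so $T_{\min}(\alpha,\mathfrak h)\leq\overline\tau$. Multiplying,
\begin{equation*}
T_{\min}(\alpha,\mathfrak h)^2\ \leq\ T_p\cdot\overline\tau\ =\ \frac{\Vol(\alpha)}{|H_1^\mathrm{tor}(\Sigma;\Z)|},
\end{equation*}
which yields $\rho_{\mathrm{sys}}(\alpha,\mathfrak h)\leq 1/|H_1^\mathrm{tor}(\Sigma;\Z)|$; the diastolic bound is symmetric (using $T_{\max}\geq T_p$ and $T_{\max}\geq\overline\tau$). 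Equality forces $\tau\equiv T_p$ on $\mathcal D$, whence $\alpha$ is Zoll by Weinstein's deformation theorem. I anticipate the main obstacle to be \emph{Step 2}: the construction of $\mathcal D$ when $M$ has positive genus, where the section is a higher-genus surface with one boundary component, so the disk-based action--minimax of~\cite{ABHS15} must be extended to this topology; additionally, identifying dynamically the persistent binding as the $|H_1^\mathrm{tor}(\Sigma;\Z)|$-th iterate of a prime orbit in class $\mathfrak h$ (rather than another related orbit) will require careful perturbation analysis of the fibre-return map over~$p$.
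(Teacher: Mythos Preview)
Your overall strategy---reduce to a surface of section bounded by an iterate of a short closed Reeb orbit, then analyse the first-return map via an action/generating-function argument---matches the paper's. The reduction from $C^2$-closeness of $\di\alpha$ to $C^3$-closeness of $\alpha$ via Hodge theory is correct and is exactly Lemma~\ref{l:estomega}. You also correctly flag the genus extension of the disk-based minimax as the main technical hurdle; this is precisely what Section~\ref{sec:generating} of the paper addresses.

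However, your concluding argument has a genuine gap. You claim that the action--minimax of \cite{ABHS15} applied to the return map $\psi$ yields a periodic orbit in $\mathfrak h$ of period at most $\overline\tau=\Vol(\alpha)/(t_\Sigma T_p)$, and then multiply this with $T_{\min}\leq T_p$. But what \cite{ABHS15} (and the paper, in Corollary~\ref{cor:implications}) actually prove is weaker: \emph{if the Calabi invariant is $\leq 0$ and $\psi\neq\id$, there is an interior fixed point with strictly negative action}, i.e.\ period $<T_p$, not period $\leq\overline\tau$. The bound $T_{\min}\leq\overline\tau$ does not follow: when $\overline\tau<T_p$ you only get $T_{\min}<T_p$, which is insufficient for $T_{\min}^2\leq T_p\,\overline\tau$. (A quasi-autonomous Hamiltonian gives $\sigma(q_{\min})\leq \CAL(\psi)/\mathrm{area}$, but the factor-of-two in $\int_N\sigma\,\di\lambda=2\CAL(\psi)$ obstructs the clean mean-value inequality you want.)

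The paper circumvents this by a different and simpler endgame: it first shows (Proposition~\ref{prp:gin}) that $\mathcal P_T(\alpha,\mathfrak h)$ is \emph{compact and non-empty}, then chooses the binding $\gamma$ to be an orbit of \emph{minimal} period in that set, and shows (Proposition~\ref{prp:normal}) that \emph{any} such $\gamma$ can serve as binding for the section. With this choice, assuming $\rho_{\mathrm{sys}}(\alpha,\mathfrak h)\geq 1/t_\Sigma$ forces $\CAL(\varphi)\leq 0$; if $\alpha$ is not Zoll then $\varphi\neq\id_N$, and implication~\eqref{e:imply} produces a fixed point with period strictly below $T(\gamma)=T_{\min}(\alpha,\mathfrak h)$, a contradiction. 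This bypasses any need for a ``period $\leq\overline\tau$'' statement and simultaneously handles the equality case cleanly. Your appeal to Weinstein's theorem for the equality case is also misplaced: the rigidity comes not from a deformation argument but from the \emph{strictness} of the action inequality in Corollary~\ref{cor:implications} when $\varphi\neq\id_N$.
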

\begin{rmk}
This result can be used to prove a systolic inequality for magnetic flows on closed oriented surfaces, as we discuss in \cite{BK19b}.
\end{rmk}
\noindent\textbf{Sketch of proof of Theorem \ref{t:main}.} The strategy of the proof closely follows the one in \cite{ABHS15}. We divide the proof of the theorem into two parts, corresponding to Section \ref{sec:global} and \ref{sec:generating}, respectively. 

In the \textbf{first part} we start by assuming without loss of generality that all prime orbits of $\alpha_*$ have period equal to $1$, the form $\alpha$ is $C^2$-close to $\alpha_*$ and $\di\alpha$ is $C^2$-close to $\di\alpha_*$. Then, we show that there exists a real number $T$ with $1<T<2$ such that the set $\mathcal P_T(\alpha,\mathfrak h)$ of prime periodic orbits $\gamma$ of $\Phi^\alpha$ in the class $\mathfrak h$ with period $T(\gamma)\leq T$ is not empty (see Proposition \ref{prp:gin}). Moreover, given $\gamma\in\mathcal P_T(\alpha,\mathfrak h)$, we construct a global surface of section $N\to\Sigma$ for $\Phi^\alpha$, which is diffeomorphic to $M$ with an open disc removed and such that its boundary covers $|H_1^\mathrm{tor}(\Sigma;\Z)|$-times the orbit $\gamma$ (see Section \ref{ss:surface}). If $\lambda$ is the restriction of $\alpha$ to $N$, then $\di\lambda$ is symplectic in the interior $\mathring N$ and vanishes of order one at the boundary $\p N$. The first-return time, a priori only defined on $\mathring N$, extends to a function $\tau:N\to(0,\infty)$, which is $C^1$-close to the constant $1$. The first-return map, a priori only defined on $\mathring N$, extends to a diffeomorphism $\varphi:N\to N$, which is $C^1$-close to $\id_N$. Moreover, there holds $\varphi^*\lambda-\lambda=\di \sigma$, where $\sigma:=\tau-T(\gamma)$ is a $C^1$-small function, called the action of $\varphi$. The volume of $\alpha$ is related to the Calabi invariant $\CAL(\varphi):=\tfrac12\int_N \sigma\di\lambda$ of the map $\varphi$ through the formula 
\begin{equation*}
\Vol(\alpha)=\int_N\tau\di\lambda=\int_N\big(\sigma+T(\gamma)\big)\di\lambda=2\CAL(\varphi)+|H_1^\mathrm{tor}(\Sigma;\Z)|\,T(\gamma)^2.
\end{equation*}
Furthermore, every fixed point $q\in \mathring N$ of $\varphi$ yields a periodic orbit $\gamma_q\in\mathcal P_T(\alpha,\mathfrak h)$ with period
\begin{equation*}
T(\gamma_q)=\sigma(q)+T(\gamma).
\end{equation*}
In particular, when $\alpha$ is not Zoll, $\varphi\neq \id_N$. The properties of the return time and the return map are collected in Theorem \ref{t:final3dim}. As a consequence, in Corollary \ref{c:neccond} we argue that Theorem \ref{t:main} is proven if we take $\gamma$ to have minimal, respectively, maximal period among orbits in $\mathcal P_T(\alpha,\mathfrak h)$, and are able to show that
\begin{equation}\label{e:implyintro}
\begin{aligned}
\varphi\neq\id_N,\ \ \CAL(\varphi)\leq 0\quad\Longrightarrow\quad \exists\, q_-\in\mathring N\cap\Fix(\varphi),\ \ \sigma(q_-)<0,\\[1ex]
\varphi\neq\id_N,\ \ \CAL(\varphi)\geq 0\quad\Longrightarrow\quad \exists\, q_+\in\mathring N\cap\Fix(\varphi),\ \ \sigma(q_+)>0.
\end{aligned}
\end{equation}
Indeed, if we take $\gamma\in\mathcal P_T(\alpha,\mathfrak h)$ with minimal period and assume $\rho_{\mathrm{sys}}(\alpha,\mathfrak h)\geq\frac{1}{|H_1^\mathrm{tor}(\Sigma;\Z)|}$, then $\CAL(\varphi)\leq 0$. But, if $\alpha$ is not Zoll, the first implication in \eqref{e:implyintro} yields $\gamma_{q_-}\in\mathcal P_T(\alpha,\mathfrak h)$ with $T(\gamma_{q_-})<T(\gamma)$. This contradiction proves $\rho_{\mathrm{sys}}(\alpha,\mathfrak h)<\frac{1}{|H_1^\mathrm{tor}(\Sigma;\Z)|}$ for a contact form $\alpha$ which is not Zoll. The inequality for $\rho_{\mathrm{dia}}(\alpha,\mathfrak h)$ follows analogously.

In the \textbf{second part} of the proof, we establish implications \eqref{e:implyintro}. When $N$ is the two-disc, these implications were already shown to hold in \cite[Corollary 5]{ABHS15}. The key step there is to find a formula for the Calabi invariant in terms of the generating function of $\varphi$ \cite[Proposition 2.20]{ABHS15}. Instead of finding such a formula for general $N$, we show implications \eqref{e:implyintro} by constructing a path $t\mapsto\varphi_t$ of $\di\lambda$-Hamiltonian diffeomorphisms of $N$ with $\varphi_0=\id_N$, $\varphi_1=\varphi$, which is generated by a quasi-autonomous Hamiltonian $H:N\times[0,1]\to\R$ (see Proposition \ref{p:qa}). We recall from \cite{BP94} that a function $H$ is quasi-autonomous if there exist $q_{\min},q_{\max}\in N$ such that
\[
\min_{q\in N} H(q,t)=H(q_{\min},t),\qquad \max_{q\in N} H(q,t)=H(q_{\max},t),\qquad \forall\, t\in[0,1].
\]
In particular, $q_{\min}$ and $q_{\max}$ are fixed points of $\varphi$, if they lie in $\mathring N$. In order to exhibit such a path, we construct a Weinstein neighbourhood of the diagonal in $\big(N\times N,(-\di\lambda)\oplus\di\lambda\big)$ (see Proposition \ref{prop:neighborhood_map}). This yields a generating function $G:N\to\R$ for $\varphi$. Let $[0,\epsilon)\times S^1\subset N$ be a collar neighbourhood of the boundary with radial coordinate $R$. At this point, crucially using that $\di\lambda$ vanishes at $\partial N$ of order one in the radial direction, we can show (see Proposition \ref{p:contg}) that the generating function belongs to
\[
\mathbb G:=\Big\{G:N\to \R\ \Big|\ G=0 \text{ on }\partial N,\ \ G \text{ is $C^2$-small on }N,\ \ \tfrac1R\di G\text{ is $C^1$-small on }[0,\epsilon)\times S^1\Big\}.
\]
Conversely, every $G\in\mathbb G$ is the generating function of some diffeomorphism $\varphi_{G}:N\to N$, which is $C^1$-close to the identity (see Proposition \ref{prp:bijectivity_of_Xi}). Therefore, since the set $\mathbb G$ is star-shaped around the zero function, the Hamilton-Jacobi equation (see \eqref{e:hj}) tells us that, for every $\varphi=\varphi_G$, the Hamiltonian function $H:N\x[0,1]\to\R$ associated with the path $t\mapsto \varphi_{tG}$, $t\in[0,1]$, is quasi-autonomous. Once the existence of a quasi-autonomous Hamiltonian is settled, implications \eqref{e:implyintro} follow (see Corollary \ref{cor:implications}), as already observed in \cite[Remark 2.8]{ABHS15}. Indeed, we can rewrite the Calabi invariant of $\varphi$ and the action of $q_{\min}$ (and similarly of $q_{\max}$), provided it lies in $\mathring N$, as
\begin{equation}\label{e:calxmin}
\CAL(\varphi)=\int_{N\times[0,1]}H\,\di\lambda\wedge\di t,\qquad \sigma(q_{\min})=\int_0^1 H(q_{\min},t)\di t.
\end{equation}
This finishes the second part of the proof and the whole sketch.
\begin{rmk}
Relations \eqref{e:calxmin} suggest that one could interpret implications \eqref{e:implyintro} as a local systolic (resp.\ diastolic) inequality for quasi-autonomous Hamiltonian diffeomorphisms. Such an inequality yields an upper (resp.\ lower) bound on the minimal (resp.\ maximal) action of a contractible fixed point in terms of the Calabi invariant. The bound can be readily proven for closed symplectic manifolds in arbitrary dimension. On the other hand, Reeb flows and Hamiltonian diffeomorphisms are two special incarnations of the characteristic foliation of an odd-symplectic form (also known as a Hamiltonian structure \cite{CM05}) on an oriented circle bundle over a closed symplectic manifold. These observations prompted us to formulate a conjectural systolic inequality for odd-symplectic forms, which we discuss in \cite{BK19}.
\end{rmk}

\noindent\textbf{Acknowledgements.} This work is part of a project in the Collaborative Research Center \textit{TRR 191 - Symplectic Structures in Geometry, Algebra and Dynamics} funded by the DFG. It was initiated when the authors worked together at the University of M\"unster and partially carried out while J.K.~was affiliated with the Ruhr-University Bochum. We thank Peter Albers, Kai Zehmisch, and the University of M\"unster for having provided an inspiring academic environment. We are grateful to Alberto Abbondandolo for valuable discussions and suggestions. We are indebted to the anonymous referee for the careful reading of the manuscript and for helpful comments on its first draft. G.B.~would like to express his gratitude to Hans-Bert Rademacher and the whole Differential Geometry group at the University of Leipzig.  G.B.~was supported by the National Science Foundation under Grant No. DMS-1440140 while in residence at the Mathematical Sciences Research Institute in Berkeley, California,
during the Fall 2018 semester. J.K.~is supported by Samsung Science and Technology Foundation (SSTF-BA1801-01).

\section{Classification of Zoll contact forms in dimension three}\label{sec:class}

This section is devoted to establish Proposition \ref{p:three}. For a clear exposition, we divide the proof into two lemmas. In the first one, we show that all Zoll contact forms on $\Sigma$ are isomorphic. This was proved in \cite[Theorem B.2]{ABHS17} when $\Sigma=SO(3)$ and in \cite[Proposition 3.9]{ABHS15} when $\Sigma=S^3$.
\begin{lem}\label{lem:Zoll_uniqueness}
Let $\Sigma$ be a connected closed three-manifold. Let $\alpha$ and $\alpha'$ be two Zoll contact forms on $\Sigma$ with unit period. There exists a diffeomorphism $\Psi:\Sigma\to \Sigma$ such that
\begin{equation*}
\Psi^*\alpha' = \alpha.
\end{equation*} 
\end{lem}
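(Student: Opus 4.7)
The plan is to use the Boothby--Wang description recalled in the introduction and to reduce the statement to a sequence of Moser-type arguments. Write $\mathfrak{p}\colon\Sigma\to M$ and $\mathfrak{p}'\colon\Sigma\to M'$ for the oriented principal $S^1$-bundles induced by the Reeb flows of $\alpha$ and $\alpha'$, and let $\omega,\omega'$ be the corresponding symplectic forms on the bases, representing minus the respective Euler classes.

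First, I would align the two circle actions. The diffeomorphism type of the base and the Euler number depend only on $\Sigma$, so the two oriented $S^1$-bundle structures on $\Sigma$ are abstractly isomorphic. A choice of bundle isomorphism provides an $S^1$-equivariant diffeomorphism $\Phi_1\colon\Sigma\to\Sigma$ intertwining the two Reeb flows. Replacing $\alpha'$ by $\Phi_1^*\alpha'$, I may assume that $\mathfrak{p}=\mathfrak{p}'$, that $R_\alpha=R_{\alpha'}$, and that $\alpha,\alpha'$ are connection one-forms for the same principal bundle, with curvatures $\omega$ and (a new) $\omega'$ on $M$.

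Second, I would align the curvatures. The forms $\omega,\omega'$ are cohomologous symplectic forms on the closed oriented surface $M$, both inducing the orientation coming from $\Sigma$, and with equal total area $\Vol(\alpha)=\Vol(\alpha')=|H_1^{\mathrm{tor}}(\Sigma;\Z)|$. Hence the linear interpolation $(1-t)\omega+t\omega'$ stays symplectic, and Moser's theorem yields an isotopy $\psi_t$ of $M$ with $\psi_0=\id$ and $\psi_1^*\omega'=\omega$. Since $\psi_1$ is isotopic to the identity, I can lift it to an $S^1$-equivariant bundle automorphism $\Phi_2\colon\Sigma\to\Sigma$. Replacing $\alpha'$ with $\Phi_2^*\alpha'$ I may further assume that $\di\alpha=\di\alpha'$.

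Third, I would remove the remaining difference via a Moser argument on $\Sigma$ itself. Since $\alpha'-\alpha$ is closed and vanishes on $R_\alpha$, there is a closed one-form $\mu$ on $M$ with $\alpha'-\alpha=\mathfrak{p}^*\mu$. The path $\alpha_t:=\alpha+t\mathfrak{p}^*\mu$ satisfies $\di\alpha_t=\mathfrak{p}^*\omega$ and $\alpha_t(R_\alpha)=1$, hence consists of contact forms. I seek a time-dependent vector field $X_t\in\ker\alpha_t$ solving $\iota_{X_t}\mathfrak{p}^*\omega=-\mathfrak{p}^*\mu$; writing $X_t=V_t-t\mathfrak{p}^*\mu(V_t)R_\alpha$ with $V_t\in\ker\alpha$ reduces the equation to $\iota_{V_t}\mathfrak{p}^*\omega=-\mathfrak{p}^*\mu$, which admits a unique smooth solution because $\mathfrak{p}^*\omega$ restricts to a pointwise isomorphism from $\ker\alpha$ onto the annihilator of $R_\alpha$. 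Integrating $X_t$ on the compact manifold $\Sigma$ produces an isotopy $\Psi_t$ with $\Psi_t^*\alpha_t=\alpha$, and the composition $\Psi:=\Phi_1\circ\Phi_2\circ\Psi_1$ then satisfies $\Psi^*\alpha'=\alpha$.

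The main obstacle will be the first step: verifying that the two $S^1$-actions on $\Sigma$ are genuinely conjugate by a self-diffeomorphism requires carefully matching the oriented diffeomorphism type of the bases together with the signed Euler numbers (the latter being $-|H_1^{\mathrm{tor}}(\Sigma;\Z)|$ in both cases). Once that topological input is in place, steps two and three are standard applications of Moser's trick.
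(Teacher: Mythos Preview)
Your proof is correct and follows essentially the same strategy as the paper: align the two $S^1$-bundle structures via the topological classification, then use a Moser-type stability argument on $\Sigma$. The only difference is organizational: the paper combines your steps two and three into a single Moser argument by taking the path $\alpha_u=\alpha+u\,\mathfrak{p}^*\eta$ with $\eta$ not necessarily closed, observing that the interpolating curvatures $\omega_u=(1-u)\omega+u\omega_1$ remain symplectic since $\psi_1$ is orientation-preserving, and solving $\iota_{\bar X_u}\omega_u=-\eta$ on the base before lifting horizontally; your version first normalises the curvature via Moser on $M$ and then removes the residual closed $\mathfrak{p}^*\mu$. Both routes work and the underlying ideas are identical.
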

\begin{proof}
The Reeb flows of $\alpha$ and $\alpha'$ yield $S^1$-actions on $\Sigma$ and let $\mathfrak p:\Sigma\to M$ and $\mathfrak p':\Sigma\to M'$ be the associated oriented $S^1$-bundles. We write $e$ and $e'$ for minus the real Euler class of $\mathfrak p$ and $\mathfrak p'$. Let us orient $M$ and $M'$ through the forms $\om$ and $\om'$, where $\di\alpha=\mathfrak p^*\om$ and $\di\alpha'=\mathfrak p'^*\om'$. By a standard topological argument, the surfaces $M$ and $M'$ have the same genus and $\langle e,[M]\rangle=\langle e',[M']\rangle$. As the Euler number $\langle e,[M]\rangle$ is a complete invariant for principal $S^1$-bundles over oriented surfaces, there exists an $S^1$-equivariant diffeomorphism $\Psi_1:\Sigma\to\Sigma$ such that $\mathfrak p'\circ\Psi_1=\psi_1\circ\mathfrak p$, for some orientation-preserving diffeomorphism $\psi_1:M\to M'$. As a result, if $\alpha_1:=\Psi^*_1\alpha'$, then there exists a one-form $\eta$ on $M$ such that $\alpha_1=\alpha+\mathfrak p^*\eta$ and $\di\alpha_1=\mathfrak p^*\omega_1$, where $\omega_1:=\psi_1^*\om'$. We construct now a diffeomorphism $\Psi_2:\Sigma\to\Sigma$ with the property $\Psi_2^*\alpha_1=\alpha$, so that $\Psi:=\Psi_2\circ\Psi_1$ is the desired map. Using a stability argument, we seek an isotopy $\Phi_u:\Sigma\to\Sigma$ generated by a vector field $X_u$ such that
\begin{equation}\label{e:gray}
\Phi_u^*\alpha_u=\alpha, 
\end{equation}
where $\alpha_u:=\alpha+u\,\mathfrak p^*\eta$, for all $u\in[0,1]$. We will then set $\Psi_2:=\Phi_1$. We observe that $\omega_u:=(1-u)\om+u\om_1$ is a path of symplectic forms on $M$, as $\psi_1$ preserves the orientation. Differentiating \eqref{e:gray} with respect to $u$, we see that \eqref{e:gray} is satisfied once $X_u$ is chosen as the vector field in $\ker\alpha_u$ with the property that $\di\mathfrak p(X_u)=\bar X_u$, where $\bar X_u$ is the unique vector field on $M$ satisfying the relation $\iota_{\bar X_u}\omega_u=-\eta$.
\end{proof}
Recall that $\mathcal Z(\Sigma)$ is the space of Zoll contact forms on $\Sigma$. Let $\bm{\xi}$ be an isotopy class of co-oriented contact structures on $\Sigma$, and let $\mathcal Z(\bm\xi)$ be the set of all Zoll forms defining some element in $\bm\xi$. We denote by $-\bm\xi$ the isotopy class obtained by reversing the co-orientation of the contact structures in $\bm\xi$. 
\begin{lem}
Let $\Sigma$ denote the total space of a non-trivial orientable $S^1$-bundle over a connected closed orientable surface.
\begin{enumerate}
\item If $\Sigma$ is either $S^3$ or $\R\mathbb P^3$, then $\ZZ(\Sigma)$ has exactly two connected components $\ZZ(\bm\xi_{\op{st}})$ and $\ZZ(\bm\xi_{\overline{\op{st}}})$. Here $\bm\xi_{\op{st}}$ is the isotopy class of the standard contact structure and $\bm\xi_{\overline{\op{st}}}$ the isotopy class obtained from $\bm\xi_{\op{st}}$ by applying an orientation-reversing diffeomorphism.
\item If $\Sigma$ is neither $S^3$ nor $\R\mathbb P^3$, then $\ZZ(\Sigma)$ has exactly two connected components $\ZZ(\bm\xi_+)$ and $\ZZ(\bm\xi_-)$. Here $\bm\xi_+$ and $\bm\xi_-$ are two distinct isotopy classes with $\bm\xi_-=-\bm\xi_+$.
\end{enumerate} 
\end{lem}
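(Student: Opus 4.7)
The plan is to combine Gray stability with Lemma \ref{lem:Zoll_uniqueness} and the Boothby--Wang classification. First I would invoke Gray's stability theorem to show that $\alpha \mapsto [\ker\alpha]$, viewed as a map from $\mathcal C(\Sigma)$ to isotopy classes of co-oriented contact structures, is locally constant; this produces a decomposition $\ZZ(\Sigma) = \bigsqcup_{\bm\xi} \ZZ(\bm\xi)$, and it remains to show that the image consists of exactly two classes and that each preimage is path-connected. I would identify the image as follows: fix $\alpha_* \in \ZZ(\Sigma)$ and let $\bm\xi := [\ker\alpha_*]$; then $-\alpha_*$ is also Zoll with class $-\bm\xi$, so both belong to the image. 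Conversely, by Lemma \ref{lem:Zoll_uniqueness} any Zoll form equals $T\Psi^*\alpha_*$ for some $T>0$ and $\Psi \in \mathrm{Diff}(\Sigma)$; using that the underlying principal $S^1$-bundle of $\Sigma$ is determined up to isomorphism by its Euler number $\pm|H_1^{\mathrm{tor}}(\Sigma;\Z)|$ --- with sign encoding the co-orientation of the Boothby--Wang construction --- the image reduces to $\{\bm\xi, -\bm\xi\}$.

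Next, to show each $\ZZ(\bm\xi)$ is path-connected, I would take $\alpha_0, \alpha_1 \in \ZZ(\bm\xi)$, rescale within $\ZZ(\Sigma)$ to equal unit period, and then apply Gray's theorem to the path of contact structures $\ker\alpha_0 \leadsto \ker\alpha_1$ to obtain an isotopy $\{\Phi_s\}_{s\in[0,1]}$ with $\Phi_0 = \id$ and $\Phi_1^*\alpha_1 = f\alpha_0$ for some positive function $f$; the pullback path $s\mapsto\Phi_s^*\alpha_1$ lies in $\ZZ(\Sigma)$ and connects $\alpha_1$ to $f\alpha_0$. To bridge $\alpha_0$ and $f\alpha_0$, I would apply Lemma \ref{lem:Zoll_uniqueness} to get $\Psi \in \mathrm{Diff}(\Sigma)$ with $\Psi^*(f\alpha_0) = \alpha_0$; since both forms induce the same orientation on $\Sigma$, $\Psi$ is automatically orientation-preserving. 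The final task is then to modify $\Psi$ by an element of the stabiliser $\mathrm{Stab}(f\alpha_0)$ so that the product lies in the identity component of $\mathrm{Diff}_+(\Sigma)$, yielding an isotopy that closes the Zoll path.

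The dichotomy in the statement arises from whether $\bm\xi = -\bm\xi$ holds as isotopy classes of co-oriented contact structures. Equality occurs precisely for $\Sigma \in \{S^3, \R\mathbb P^3\}$, where the standard structure $\xi_{\op{st}}$ is classically known to be isotopic to its conjugate; in this case the two components of $\ZZ(\Sigma)$ are distinguished instead by the sign of $\alpha\wedge\di\alpha$ relative to a fixed orientation and are interchanged by an orientation-reversing self-diffeomorphism, producing $\ZZ(\bm\xi_{\op{st}})$ and $\ZZ(\bm\xi_{\overline{\op{st}}})$. Otherwise, $\bm\xi \neq -\bm\xi$ and the two components are $\ZZ(\bm\xi_\pm)$ with $\bm\xi_- = -\bm\xi_+$. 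The main obstacle I anticipate is the final modification step above: proving that $\mathrm{Stab}(\alpha_*)$ --- which consists of bundle automorphisms of the Boothby--Wang fibration covering symplectomorphisms of $(M,\omega)$ --- surjects, modulo the identity component, onto the subgroup of $\pi_0(\mathrm{Diff}_+(\Sigma))$ preserving $\bm\xi$. For $S^3$ and $\R\mathbb P^3$ this is immediate since $\pi_0(\mathrm{Diff}_+(\Sigma))$ is trivial there, but in the generic case it requires a more delicate argument, likely exploiting that the mapping class group of the base surface is generated by area-preserving diffeomorphisms that lift to bundle automorphisms.
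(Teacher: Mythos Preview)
Your overall architecture---decompose $\ZZ(\Sigma)$ by isotopy class of co-oriented contact structure via Gray stability, then argue each piece is connected and count the pieces---is sound and close in spirit to the paper. However, the places you flag as ``obstacles'' are precisely where the substance of the proof lies, and your arguments for the other steps have gaps too.

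First, your reduction of the image to $\{\bm\xi,-\bm\xi\}$ via the Euler number does not work. Knowing that the $S^1$-bundle underlying $T\Psi^*\alpha_*$ is abstractly isomorphic to that of $\alpha_*$ does not imply that $\ker(\Psi^*\alpha_*)$ is \emph{isotopic} to $\ker\alpha_*$; for that you would need $\Psi$ to be isotopic to the identity (or to a bundle automorphism), which is exactly the unresolved issue you postpone to the ``modification step''. In fact for $S^3$ and $\R\mathbb P^3$ the image is \emph{not} $\{\bm\xi,-\bm\xi\}$: there one has $-\bm\xi_{\op{st}}=\bm\xi_{\op{st}}$ (an element of $SO(4)$ takes $\alpha_*$ to $-\alpha_*$), so this set is a singleton, yet $\ZZ(\Sigma)$ still has two components. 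The second class is $\bm\xi_{\overline{\op{st}}}$, obtained via an orientation-reversing diffeomorphism, and this is genuinely different from $\pm\bm\xi_{\op{st}}$ because it induces the opposite orientation on $\Sigma$.

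Second, your acknowledged gap---arranging that $\Psi$ becomes isotopic to the identity after composing with an element of the stabiliser---is the entire content of the proof. The paper does not attempt the surjectivity argument you sketch (lifting area-preserving mapping classes of the base); instead it invokes the computation of $\pi_0(\mathrm{Diff}(\Sigma))$ directly: Cerf's theorem for $S^3$, the Bonahon and Hodgson--Rubinstein results for lens spaces, and Waldhausen's theorem for Seifert fibrations over higher-genus surfaces. These say, roughly, that any diffeomorphism is isotopic either to the identity or to a single specific map (orientation-reversing for $S^3,\R\mathbb P^3$; fibre-orientation-reversing otherwise), which immediately gives both the count and the connectedness.

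Third, your assertion that $\bm\xi_+\neq\bm\xi_-$ in case (2) is unproved. Showing that $\alpha_*$ and $-\alpha_*$ lie in different components of $\ZZ(\Sigma)$ (via the free-homotopy class of the fibre) is easier and the paper does this first; but upgrading this to the statement that the underlying co-oriented contact structures are non-isotopic requires a contact-topological input, and the paper appeals to Massot's classification and a twisting-number computation.
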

\begin{proof}
Let us fix a Zoll contact form $\alpha$ on $\Sigma$ with unit period and bundle map $\mathfrak p:\Sigma\to M$. We consider any other Zoll form $\alpha'$ with unit period on $\Sigma$ and we distinguish two cases.\\[-2ex]

\noindent\textbf{Case 1: $M=S^2$.} Here $\Sigma$ is the lens space $L(p,1)$ for some $p\geq 1$. Lemma \ref{lem:Zoll_uniqueness} yields a diffeomorphism $\Psi:\Sigma\to\Sigma$ with the property $\alpha'=\Psi^*\alpha$. Suppose that $\Sigma$ is either $S^3$ or $\R\mathbb P^3$ and let $\bar\Upsilon:\Sigma\to\Sigma$ be a diffeomorphism of $\Sigma$ reversing the orientation. By Cerf's Theorem (see \cite{Cer68}, and \cite[Th\'eor\`eme 3]{Bon83} or \cite[Theorem 5.6]{HR85}), $\Psi$ is either isotopic to the identity or to $\bar\Upsilon$, thus showing that $\alpha'$ is either homotopic to $\alpha$ or to $\bar{\Upsilon}^*\alpha$ within $\ZZ(\Sigma)$. 

Suppose now that $\Sigma$ is neither $S^3$ nor $\R\mathbb P^3$. Then, a $\mathfrak p$-fibre is not homotopic to itself with reverse orientation. Therefore, $\alpha$ and $-\alpha$ are not homotopic in $\ZZ(\Sigma)$. Moreover, by Lemma \ref{lem:Zoll_uniqueness}, there exists a diffeomorphism $\Upsilon_-:\Sigma\to\Sigma$ such that $\Upsilon_-^*\alpha=-\alpha$. In particular, $\Upsilon_-$ changes the orientation of the fibres and is not isotopic to the identity. By \cite[Th\'eor\`eme 3]{Bon83} or \cite[Theorem 5.6]{HR85} again, the map $\Psi$ is either isotopic to the identity or to $\Upsilon_-$. Hence, $\alpha'$ is either homotopic to $\alpha$ or to $-\alpha$ within $\ZZ(\Sigma)$.\\[-2ex]

\noindent\textbf{Case 2: $M\neq S^2$.} The long exact sequence of homotopy groups shows that a $\mathfrak p$-fibre is not homotopic to itself. Therefore, $\alpha$ and $-\alpha$ are not homotopic within $\ZZ(\Sigma)$. Moreover, \cite[Satz 5.5]{Wal67} implies that there exists a diffeomorphism $\Psi:\Sigma\to\Sigma$ isotopic to the identity and such that $\Psi^*\alpha'$ is an $S^1$-connection for $\mathfrak p$ or for $\mathfrak p$ with reversed orientation. The stability argument contained in the proof of Lemma \ref{lem:Zoll_uniqueness} shows that $\Psi^*\alpha'$ is homotopic to $\alpha$ or $-\alpha$.
\medskip

We finally observe that if $\Sigma$ is not $S^3$ nor $\R\mathbb P^3$, then $\bm\xi_+$ and $\bm\xi_-$ are not isotopic. To this purpose, we use the last statement of Theorem D in \cite{Mas08}. The fact that $\Sigma\neq S^3,\R\mathbb P^3$ is equivalent to the fact that $\langle e,[M]\rangle> \chi(M)$, and implies the hypothesis $-b-r<2g-2$ contained therein, where $b=\langle e,[M]\rangle$, $r=0$ and $2g-2=-\chi(M)$. Therefore, one only needs to check that the twisting number $t(\bm\xi_\pm)$ defined in \cite[p.~1730]{Mas08} is equal to $-1$. If we suppose that $\alpha$ has period $1$, then it is an $S^1$-connection for $\mathfrak p$ with $\di\alpha=\mathfrak p^*\om$ and there exists a positively immersed disc $D^2\hookrightarrow M$, whose lift to the universal cover of $M$ is embedded and such that $\int_{D^2}\om=1$. One readily sees that the horizontal lift of the boundary of $D^2$ traversed in the negative direction is a $\bm\xi_\pm$-Legendrian curve in $\Sigma$, which is isotopic to an oriented $\mathfrak p$-fibre and has twisting number $-1$. 
\end{proof}
\section{A global surface of section for contact forms near Zoll ones}\label{sec:global}
Let us start by fixing some notation which will be used below. As before, we set $S^1= \R/\Z$. Let $\Sigma$ be a connected closed three-manifold and let $\alpha_*$ be a Zoll contact form on $\Sigma$ with unit period (see Definition \ref{dfn:zoll}). Let $R_*$ denote the Reeb vector field of $\alpha_*$. Since $\alpha_*$ is Zoll, $R_*$ induces a free $S^1$-action on $\Sigma$ and yields an oriented $S^1$-bundle $\mathfrak p:\Sigma\to M$, where $M$ is the quotient of $\Sigma$ by the action and $\mathfrak p$ is the canonical projection. We write \label{def:mathfrakh1}$\mathfrak h$ for the free-homotopy class of the oriented $\mathfrak p$-fibres. Throughout this section, we fix auxiliary Riemannian metrics on $\Sigma$ and $M$, in order to compute the distance between points and between diffeomorphisms, and the norm of sections of vector bundles over these manifolds. The space $M$ is a connected closed surface having a symplectic form $\om_*$ satisfying 
\begin{equation*}\label{eq:alphaomega}
\di\alpha_*=\mathfrak p^*\om_*.
\end{equation*}
We endow $M$ with the orientation induced by $\om_*$.

Let $g_\st$ and $i$ be the standard scalar product and complex structure on $\R^2\cong \C$, respectively. If $a>0$ is an arbitrary positive number, we denote by $B$ (respectively $B'$) the closed Euclidean ball in $\R^{2}$ of radius $a$ (respectively $a/2$). We write $x=(x_1,x_2)$ for a point in $B$ and let $\lambda_\st=\tfrac{1}{4\pi}(x_1\di x_2-x_2 \di x_1)$ be the standard Liouville form (up to a constant) on $B$. We consider the trivial bundle $\mathfrak p_\st:B\times S^1\to B$ and we write $\phi$ for the fibre coordinate. We set
\begin{equation*}\label{e:standardstructures}
\alpha_\st:=\di\phi+\mathfrak p_\st^*\lambda_\st,\qquad R_\st:=\partial_\phi.
\end{equation*}
We now define a finite Darboux covering for $M$. To this purpose, let $Z\subset \Sigma$ be a finite set of points. We consider $S^1$-equivariant embeddings
\begin{equation*}\label{eq:H_z}
\Df_z:B\x S^1\longrightarrow \Sigma,\qquad \Df_z(0,0)=z,\qquad\forall\, z\in Z.
\end{equation*}
This means that there are corresponding embeddings
\[
\df_q:B\longrightarrow M,\qquad \df_q(0)=q,\qquad\forall\, q\in \mathfrak p(Z)
\] 
such that
\[
\mathfrak p\circ \Df_z=\df_{\mathfrak p(z)}\circ\mathfrak p_\st,\qquad \forall\, z\in Z.
\]
We write \label{def:sigmaz}$\Sigma_z:=\Df_z(B\times S^1)$, $\Sigma'_z:=\Df_z(B'\times S^1)$, and $M_q:=\df_q(B)$, $M'_q:=\df_q(B')$. Finally, we denote by $(x_z,\phi_z)\in B\x S^1$ the coordinates given by $\Df_z$. By the compactness of $\Sigma$, we see that, if $a$ is small enough, the following three properties can be assumed to hold
\begin{equation}\label{e:df1}
\begin{aligned}
{\bf (DF1)}&\quad M=\bigcup_{q\in \mathfrak p(Z)}M'_q,\\
{\bf (DF2)}&\quad \exists\, d_*>0,\quad \dist(M'_q,M\setminus M_q)>d_*,\quad\forall\, q\in \mathfrak p(Z),\\[1ex]
{\bf (DF3)}&\quad \Df_z^*\alpha_*=\alpha_{\op{st}},\quad \forall\, z\in Z.
\end{aligned}
\end{equation}
In this section, we define a neighbourhood of $\di\alpha_*$ in the space of exact two-forms on $\Sigma$ with special properties. The elements of the neighbourhood will be exterior differentials of contact forms, whose Reeb flow has a distinguished set of periodic Reeb orbits, which can be used to construct a global surface of section for the flow.
\subsection{A distinguished class of periodic Reeb orbits}\label{ss:neighbourhood}

For any contact form $\alpha$ on $\Sigma$, let $R_\alpha$ be its Reeb vector field. Let $\mathcal P(\alpha)$ denote the set of prime periodic orbits of the Reeb flow $\Phi^\alpha$ of $\alpha$. For all $T\in(0,\infty)$, we also denote by $\mathcal P_T(\alpha)$ the subset of $\mathcal P(\alpha)$, whose elements have period less than or equal to $T$. We write $\mathcal P(\alpha,\mathfrak h)$ for the subset of $\mathcal P(\alpha)$, whose elements are in the class $\mathfrak h$, namely they are freely homotopic to an oriented $\mathfrak p$-fibre. We abbreviate $\mathcal P_T(\alpha,\mathfrak h):=\mathcal P(\alpha,\mathfrak h)\cap \mathcal P_T(\alpha)$.

If $\gamma\in\mathcal P(\alpha)$, we write $T(\gamma)$ for the period of $\gamma$ and define the auxiliary one-periodic curves
\begin{equation*}\label{def:gammarepbar}
\gamma_\rep,\bar\gamma:S^1\to\Sigma,\qquad \gamma_\rep(u):=\gamma(uT(\gamma)),\qquad \bar\gamma(u):=\Phi^{\alpha_*}_u(\gamma(0)).
\end{equation*}
We define
\begin{equation*}\label{e:tminmax}
T_{\min}(\alpha,\mathfrak h):=\inf_{\gamma\in\mathcal P(\alpha,\mathfrak h)}T(\gamma),\qquad T_{\max}(\alpha,\mathfrak h):=\sup_{\gamma\in\mathcal P(\alpha,\mathfrak h)}T(\gamma).
\end{equation*}
We now explore how much information of the Reeb dynamics is already encoded in the exterior differential of the contact form.
\begin{lem}\label{l:neighbourhood1}
Let $\alpha_1$ and $\alpha_2$ be contact forms such that $\di\alpha_1=\di\alpha_2$. The forms $\alpha_1\wedge\di\alpha_1$ and $\alpha_2\wedge\di\alpha_2$ induce the same orientation on $\Sigma$ and $\Vol(\alpha_1)=\Vol(\alpha_2)$. Moreover, there is a bijection between $\mathcal P(\alpha_1)$ and $\mathcal P(\alpha_2)$ which preserves the oriented support of curves. The bijection is period-preserving when restricted to $\mathcal P(\alpha_1,\mathfrak h)$ and $\mathcal P(\alpha_2,\mathfrak h)$. If $\alpha_1$ is Zoll, then $\alpha_2$ is also Zoll, and $T(\alpha_1)=T(\alpha_2)$.
\end{lem}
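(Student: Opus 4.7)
The plan is to use that $\di\alpha_1=\di\alpha_2$ makes $\beta:=\alpha_2-\alpha_1$ a closed $1$-form, which records the entire difference between the two Reeb dynamics through an action computation. I will first recover all claims that involve only $\di\alpha$ from the observation that the two Reeb vector fields are positive reparametrisations of one another, and then deduce the period and Zoll statements via the action formula combined with the fact that the fibres of a non-trivial oriented $S^1$-bundle are torsion in integer homology.

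The first step is to show that the Reeb vector fields are parallel. Since $\ker\di\alpha_1=\ker\di\alpha_2$ is a $1$-dimensional line field, one can write $R_{\alpha_2}=f\,R_{\alpha_1}$ where $f=1/\alpha_2(R_{\alpha_1})$ is nowhere zero and hence of constant sign on the connected manifold $\Sigma$. A pointwise calculation in a local basis $(R_{\alpha_1},e_1,e_2)$ of $T\Sigma$ with $e_1,e_2\in\ker\alpha_1$ and $\di\alpha_1(e_1,e_2)=1$ gives
\[
\alpha_2\wedge\di\alpha_2 \;=\; \alpha_2(R_{\alpha_1})\cdot\alpha_1\wedge\di\alpha_1.
\]
To pin down the sign, I would integrate: from $\di\beta=0$ one has $\beta\wedge\di\alpha_1=-\di(\beta\wedge\alpha_1)$, so Stokes gives $\int_\Sigma\alpha_2\wedge\di\alpha_2=\int_\Sigma\alpha_1\wedge\di\alpha_1$. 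Were $\alpha_2(R_{\alpha_1})$ negative throughout, the two integrands would have strictly opposite signs relative to any fixed orientation of $\Sigma$, contradicting this equality. Hence $f>0$, so the two contact forms induce the same orientation, $\Vol(\alpha_1)=\Vol(\alpha_2)$, and the two Reeb flows share the same oriented orbits up to positive reparametrisation, which yields the claimed oriented-support-preserving bijection $\mathcal P(\alpha_1)\leftrightarrow\mathcal P(\alpha_2)$.

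For period-preservation on class $\mathfrak h$, I use that for a prime periodic orbit $\gamma$, viewed as the common oriented image of the two corresponding Reeb orbits, the period with respect to $\alpha_i$ equals $\int_\gamma\alpha_i$ (since $\alpha_i(R_{\alpha_i})\equiv 1$ along $\gamma$). Subtracting yields
\[
T(\gamma;\alpha_2)-T(\gamma;\alpha_1)=\int_\gamma\beta.
\]
If $\gamma$ lies in the class $\mathfrak h$, it is freely homotopic to a fibre $F$ of $\mathfrak p:\Sigma\to M$; because $\beta$ is closed, $\int_\gamma\beta=\int_F\beta$. The key topological input then enters: by the Boothby--Wang criterion recalled in the introduction, $\mathfrak p$ is a non-trivial oriented $S^1$-bundle over a surface, so the fibre class $[F]\in H_1(\Sigma;\Z)$ is torsion and its image in $H_1(\Sigma;\R)$ vanishes, giving $\int_F\beta=0$. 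Period-preservation on $\mathcal P(\alpha_1,\mathfrak h)$ follows. The Zoll persistence is handled the same way: if $\alpha_1$ is Zoll with bundle $\mathfrak p_1:\Sigma\to M_1$, then $\mathfrak p_1$ is also non-trivial (again by Boothby--Wang, applied to $\alpha_1$), every $R_{\alpha_1}$-orbit is a fibre of $\mathfrak p_1$ with torsion class, and the same action identity forces all $R_{\alpha_2}$-orbits to be closed with common period $T(\alpha_1)$. Positivity of $f$ then preserves prime periods, so $\alpha_2$ is Zoll with $T(\alpha_2)=T(\alpha_1)$.

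The only non-formal ingredient is the homological vanishing $\int_F\beta=0$: without the non-triviality of the $S^1$-bundles underlying $\mathfrak p$ (and $\mathfrak p_1$ in the Zoll statement), the pairing of $[\beta]\in H^1(\Sigma;\R)$ with the fibre class could be non-zero and both the period-preservation and the equality $T(\alpha_1)=T(\alpha_2)$ would fail.
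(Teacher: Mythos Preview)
Your proof is correct and follows essentially the same route as the paper's: writing $\alpha_2-\alpha_1=\beta$ closed, using Stokes on $\beta\wedge\di\alpha_1$ to get equality of volumes and hence of orientations, then using the action identity $T(\gamma;\alpha_2)-T(\gamma;\alpha_1)=\int_\gamma\beta$ together with the torsion of the fibre class to obtain period preservation and the Zoll statement. Your treatment is in fact a bit more explicit than the paper's, spelling out the pointwise relation $\alpha_2\wedge\di\alpha_2=\alpha_2(R_{\alpha_1})\,\alpha_1\wedge\di\alpha_1$ and the Boothby--Wang reason why the relevant homology classes are torsion, whereas the paper simply asserts these facts.
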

\begin{proof}
Since $\di\alpha_1=\di\alpha_2$, we have $R_{\alpha_2}=\tfrac{1}{\alpha_2(R_{\alpha_1})}R_{\alpha_1}$ and $\alpha_2=\alpha_1+\eta$ for some closed one-form $\eta$. We orient $\Sigma$ so that $\alpha_2\wedge\di\alpha_2$ is positive and compute
\[
\Vol(\alpha_2)=\int_\Sigma \alpha_2\wedge \di\alpha_2=\int_\Sigma \alpha_1\wedge \di\alpha_1+\int_\Sigma\eta\wedge \di\alpha_2=\int_\Sigma \alpha_1\wedge \di\alpha_1=\Vol(\alpha_1).
\]
In particular, the orientations induced by $\alpha_1$ and by $\alpha_2$ coincide. Therefore, for all $z\in\Sigma$, there holds
\begin{equation*}
\Phi^{\alpha_2}_{t_2(t_1,z)}(z)=\Phi^{\alpha_1}_{t_1}(z),\qquad t_2(t_1,z):=\int_0^{t_1}\big(t\mapsto\Phi^{\alpha_1}_t(z)\big)^*\alpha_2,
\end{equation*}
so that $t_1\mapsto t_2(t_1,z)$ is strictly increasing. Hence, $\Phi^{\alpha_1}$ and $\Phi^{\alpha_2}$ have the same trajectories, up to an orientation-preserving reparametrisation, and we have a bijective correspondence between $\mathcal P(\alpha_1)$ and $\mathcal P(\alpha_2)$ preserving the oriented support of periodic orbits. Let $\gamma_1\in\mathcal P(\alpha_1,\mathfrak h)$ and $\gamma_2\in\mathcal P(\alpha_2,\mathfrak h)$ be corresponding periodic orbits. Since the homology class of $\gamma_1$ and $\gamma_2$ is torsion, the fact that $\eta$ is closed implies
\[
T(\gamma_2)=\int_{\R/T(\gamma_2)\Z}\gamma_2^*\alpha_2=\int_{\R/T(\gamma_1)\Z}\gamma_1^*\alpha_1+\int_{\R/T(\gamma_2)\Z}\gamma_2^*\eta=T(\gamma_1)+0.
\]
Finally, if $\alpha_1$ is Zoll with period $T_1$, then $\alpha_2$ is also Zoll with period $T_2:=t_2(T_1,z)$ (independent of $z\in\Sigma$), as $t_1\mapsto t_2(t_1,z)$ is monotone increasing. Since every prime periodic orbit of $\Phi^{\alpha_1}$ has torsion homology class, we conclude as above that $T_2=T_1$.
\end{proof}
On the space of one-forms $\alpha$ on $\Sigma$ we consider the norm $\Vert \cdot\Vert_{C^3_-}$ defined by
\[\label{e:normalpha}
\Vert \alpha\Vert_{C^3_-}:=\Vert \alpha\Vert_{C^2}+\Vert \di\alpha\Vert_{C^2}.
\]
There is a constant $C_{\mathfrak D}>0$ depending only on $\Sigma$ and the Darboux family such that for every one-form $\alpha$ on $\Sigma$,
\begin{equation}\label{e:constantcd}
\frac{1}{C_{\mathfrak D}}\Vert\alpha\Vert_{C^3_-}\leq \max_{z\in Z}\Vert \mathfrak D_z^*\alpha\Vert_{C^3_-}\leq C_{\mathfrak D}\Vert\alpha\Vert_{C^3_-}. 
\end{equation}
For every $\epsilon>0$, we denote the $C^3_-$-ball with center $\alpha_*$ and radius $\epsilon$ by
\[\label{e:ball}
\mathcal B(\epsilon):=\big\{\alpha\ \text{one-form on }\Sigma\ \big|\ \Vert\alpha-\alpha_*\Vert_{C^3_-}<\epsilon\big\}.
\]
The next result shows why it is natural to consider the $C^3_-$-norm for our purposes.
\begin{lem}\label{l:estomega}
There exists a constant $C_0>0$ such that for all one-forms $\alpha'$ on $\Sigma$, there is a one-form $\alpha$ on $\Sigma$ with the property that 
\begin{equation*}
\bullet\quad \di\alpha=\di\alpha',\qquad\qquad \bullet\quad \forall\,\epsilon>0,\quad \Vert \di\alpha'-\di\alpha_*\Vert_{C^2}<\epsilon\quad\Longrightarrow\quad \alpha\in\mathcal B(C_0\epsilon).
\end{equation*}
\end{lem}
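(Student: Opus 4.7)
The plan is to construct $\alpha$ by adding to $\alpha_*$ a primitive of the exact $2$-form $\omega:=\di\alpha'-\di\alpha_*$, chosen via Hodge theory so that its $C^2$-norm is controlled by $\Vert\omega\Vert_{C^2}$. Fix an auxiliary Riemannian metric on $\Sigma$ and let $\di^*$, $\Delta=\di\di^*+\di^*\di$, $\Pi$, and $G$ denote the codifferential, the Hodge Laplacian on forms, the $L^2$-orthogonal projection onto harmonic forms, and the Green's operator, respectively, so that $G\Delta=\Delta G=\mathrm{Id}-\Pi$ and $G$ commutes with both $\di$ and $\di^*$. Set
\[
\beta:=\di^* G\omega\in\Omega^1(\Sigma),\qquad \alpha:=\alpha_*+\beta.
\]

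I first verify the identity $\di\alpha=\di\alpha'$. Since $\omega$ is an exact $2$-form, $\di\omega=0$, and $\Pi\omega=0$ as harmonic $2$-forms are $L^2$-orthogonal to exact ones by Stokes. Hence
\[
\di\beta=\di\di^* G\omega=(\Delta-\di^*\di)G\omega=\omega-\Pi\omega-\di^* G(\di\omega)=\omega,
\]
so $\di\alpha=\di\alpha_*+\omega=\di\alpha'$, which is the first bullet. It remains to bound $\Vert\alpha-\alpha_*\Vert_{C^2}=\Vert\di^* G\omega\Vert_{C^2}$ in terms of $\Vert\omega\Vert_{C^2}$. Combined with the trivial identity $\Vert\di\alpha-\di\alpha_*\Vert_{C^2}=\Vert\omega\Vert_{C^2}$, such a bound yields $\Vert\alpha-\alpha_*\Vert_{C^3_-}\leq C_0\Vert\omega\Vert_{C^2}<C_0\epsilon$ with $C_0:=C+1$, where $C$ denotes the $C^2\to C^2$ operator norm of $\di^* G$.

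The main (and only non-trivial) point is thus the mapping property $\di^* G\colon C^2\to C^2$. Since $\di^* G$ is a classical pseudodifferential operator of order $-1$ on the closed manifold $\Sigma$, Schauder-type estimates for the Hodge Laplacian yield its boundedness $C^{k,\mu}\to C^{k+1,\mu}$ for every integer $k\geq 0$ and every $\mu\in(0,1)$. Chaining the continuous inclusions $C^2\hookrightarrow C^{1,\mu}$ and $C^{2,\mu}\hookrightarrow C^2$ we obtain $\Vert\di^* G\omega\Vert_{C^2}\leq C\Vert\omega\Vert_{C^2}$, with $C$ depending only on the fixed Riemannian metric. The one potentially delicate aspect is that elliptic regularity is naturally formulated in Hölder rather than pure $C^k$ classes, but the Hölder detour just described circumvents this at the cost of a harmless constant. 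Everything else is a short formal computation; what really matters is the selection of the correct primitive of $\omega$, which Hodge theory hands us for free.
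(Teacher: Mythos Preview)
Your proof is correct and follows essentially the same approach as the paper: both construct $\alpha=\alpha_*+\eta$ where $\eta$ is a primitive of the exact form $\omega=\di\alpha'-\di\alpha_*$ whose $C^2$-norm is controlled by $\Vert\omega\Vert_{C^2}$ via elliptic theory, and then take $C_0=C+1$. The paper simply cites ``standard elliptic arguments'' for the existence of such a primitive, whereas you have spelled out the Hodge-theoretic construction $\eta=\di^*G\omega$ and the Schauder/H\"older interpolation that justifies the $C^2\to C^2$ bound.
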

\begin{proof}
By standard elliptic arguments (see for instance,  \cite[Chapter 10]{Nic07}), there exists a constant $C'_0>0$ such that for any exact two-form $\Omega$ on $\Sigma$, we can find a one-form $\eta_\Omega$ with
\[
\di\eta_\Omega=\Omega,\qquad \Vert \eta_\Omega\Vert_{C^2}\leq C'_0\Vert \Omega\Vert_{C^2}.
\]
Setting $\alpha:=\alpha_*+\eta_{\di\alpha'-\di\alpha_*}$ and applying the above fact to $\eta_{\di\alpha'-\di\alpha_*}$, we have $\di\alpha=\di\alpha'$ and 
\[
\Vert \alpha-\alpha_*\Vert_{C^2}\leq C'_0\Vert\di\alpha'-\di\alpha_*\Vert_{C^2},\qquad \Vert \di\alpha-\di\alpha_*\Vert_{C^2}=\Vert\di\alpha'-\di\alpha_*\Vert_{C^2}.
\]
The statement follows with $C_0:=C_0'+1$.
\end{proof}
We can now proceed to study the Reeb dynamics for one-forms in the sets $\mathcal B(\epsilon)$.
\begin{lem}\label{lem:gin}
There exist $\epsilon_0>0$ and $C_1>0$ with the following properties. Every $\alpha\in\mathcal B(\epsilon_0)$ is a contact form, and if $z'\in\Sigma$, $z\in Z$ and $T\in(0,\infty)$ are such that the integral curve $t\mapsto \Phi^{\alpha}_t(z')$ lies in $\Sigma_z$ for all $t\in[0,T]$, then the curve $\gamma_z:=(x_z(t),\phi_z(t))=\Df_z^{-1}(\Phi^{\alpha}_t(z))$ satisfies
\begin{equation}\label{eq:estimates1}
\|\dot \gamma_z-R_\st \|_{C^{2}}\leq C_1\|\alpha-\alpha_*\|_{C^3_-}.
\end{equation}
Thus, if $\wh \phi_{z}:[0,T]\to\R$ with $\wh \phi_{z}(0)=0$ is a lift of $\phi_z-\phi_z(0)$, then
\begin{equation}\label{eq:estimates2}
 |x_z(t)-x_z(0)|\leq C_1t\|\alpha-\alpha_*\|_{C^3_-},\qquad |\wh\phi_z(t)-t|\leq C_1t\|\alpha-\alpha_*\|_{C^3_-},\quad \forall\, t\in [0,T].
\end{equation}
\end{lem}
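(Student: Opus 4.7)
The plan is to reduce everything to uniform $C^2$-estimates on the Reeb vector field inside the Darboux charts $\Df_z$, then integrate the Reeb ODE.

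First I would fix $\epsilon_0>0$ small enough to guarantee that every $\alpha\in\mathcal B(\epsilon_0)$ is a contact form with $\alpha\wedge\di\alpha$ bounded away from zero by a uniform constant. This is immediate: since $\|\alpha-\alpha_*\|_{C^3_-}<\epsilon_0$ gives $C^0$-control on $\alpha$ and $\di\alpha$, the three-form $\alpha\wedge\di\alpha$ is $C^0$-close to $\alpha_*\wedge\di\alpha_*$, which is nowhere vanishing on the compact manifold $\Sigma$. Openness of the contact condition then yields the claim with, say, $\alpha\wedge\di\alpha\geq \tfrac12\,\alpha_*\wedge\di\alpha_*$.

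Next I would work chart-by-chart. Fix $z\in Z$ and set $\wt\alpha:=\Df_z^*\alpha$, which by {\bf (DF3)} and \eqref{e:constantcd} satisfies $\|\wt\alpha-\alpha_\st\|_{C^3_-}\leq C_\Df\|\alpha-\alpha_*\|_{C^3_-}$. In these coordinates the Reeb vector field $R_{\wt\alpha}$ is the unique solution of the linear system $\wt\alpha(R_{\wt\alpha})=1$ and $\iota_{R_{\wt\alpha}}\di\wt\alpha=0$; Cramer's rule writes its components as polynomials in the coefficients of $\wt\alpha$ and $\di\wt\alpha$ divided by the (uniformly non-vanishing) coefficient of $\wt\alpha\wedge\di\wt\alpha$. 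Since $R_{\alpha_\st}=R_\st=\p_\phi$, the Taylor expansion at $\alpha_\st$ of this rational map yields the locally Lipschitz estimate
\[
\|R_{\wt\alpha}-R_\st\|_{C^2(B\x S^1)}\ \leq\ C'_1\|\wt\alpha-\alpha_\st\|_{C^3_-}\ \leq\ C'_1\,C_\Df\,\|\alpha-\alpha_*\|_{C^3_-},
\]
where one shrinks $\epsilon_0$ if necessary to keep $\wt\alpha$ in a uniform neighbourhood of $\alpha_\st$. Note that the $C^3_-$-norm is tailored to this step: it provides the $C^2$-control on both $\wt\alpha$ and $\di\wt\alpha$ that the rational formula needs in order to yield $C^2$-control on $R_{\wt\alpha}$.

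Now I would translate this pointwise-on-$B\x S^1$ estimate into a $C^2$-in-$t$ estimate for the curve $\gamma_z$. Since $\dot\gamma_z(t)=R_{\wt\alpha}(\gamma_z(t))$ and $R_\st$ is constant,
\[
\|\dot\gamma_z-R_\st\|_{C^0([0,T])}\ \leq\ \|R_{\wt\alpha}-R_\st\|_{C^0}.
\]
Differentiating the ODE gives $\ddot\gamma_z=(\Di R_{\wt\alpha})\cdot\dot\gamma_z$ and $\dddot\gamma_z=(\Di^2R_{\wt\alpha})(\dot\gamma_z,\dot\gamma_z)+(\Di R_{\wt\alpha})\cdot\ddot\gamma_z$. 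Because $\Di R_\st=0$, the quantities $\Di R_{\wt\alpha}$ and $\Di^2 R_{\wt\alpha}$ are themselves $O(\|\alpha-\alpha_*\|_{C^3_-})$ in $C^0$, while $\dot\gamma_z$ stays uniformly bounded (it is $C^0$-close to $R_\st$). A short bootstrap therefore bounds $\|\ddot\gamma_z\|_{C^0}$ and $\|\dddot\gamma_z\|_{C^0}$ by a constant times $\|\alpha-\alpha_*\|_{C^3_-}$, establishing \eqref{eq:estimates1} with a suitable $C_1$.

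Finally, the integral estimates \eqref{eq:estimates2} follow directly. Writing $\dot\gamma_z-R_\st=(\dot x_z,\,\dot{\wh\phi}_z-1)$ and integrating componentwise from $0$ to $t$ gives both inequalities, possibly after enlarging $C_1$. The main obstacle is really the middle step, namely the Lipschitz dependence $\alpha\mapsto R_\alpha$ in the mixed norms $C^3_-\to C^2$; once that is isolated and handled via the explicit Cramer formula on a Darboux chart, the rest is elementary ODE bookkeeping.
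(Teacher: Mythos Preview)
Your proof is correct and follows essentially the same route as the paper: pull back to a Darboux chart via \eqref{e:constantcd}, use the rational dependence of the Reeb vector field on $(\alpha,\di\alpha)$ to get $\|R_{\wt\alpha}-R_\st\|_{C^2}\lesssim\|\alpha-\alpha_*\|_{C^3_-}$, then exploit $\Di R_\st=0$ to bootstrap the time-derivatives of $\gamma_z$ and integrate. The paper's proof is slightly terser about the Lipschitz step but otherwise identical in structure.
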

\begin{proof}
If $\alpha$ is a one-form on $\Sigma$ and we set $\alpha_z:=\Df_z^*\alpha$, then the estimate \eqref{e:constantcd} yields
\begin{equation}\label{e:estimatez}
\Vert \alpha_z-\alpha_{\op{st}}\Vert_{C^3_-}\leq C_{\mathfrak D} \|\alpha-\alpha_*\|_{C^3_-}.
\end{equation}
If $\epsilon_0>0$ is sufficiently small, then every $\alpha\in\mathcal B(\epsilon_0)$ is a contact form and there exists $A>0$ such that
\begin{equation}\label{e:ralphaz}
\Vert R_{\alpha_z}-R_{{\op{st}}}\Vert_{C^2}\leq A\Vert\alpha_z-\alpha_{\op{st}}\Vert_{C^3_-},\qquad \forall\,\alpha\in\mathcal B(\epsilon_0).
\end{equation}
Moreover, using \eqref{e:estimatez}, we have
\begin{equation}\label{e:inrz}
\Vert R_{\alpha_z}-R_\st\Vert_{C^2}\leq AC_{\mathfrak D}\Vert \alpha-\alpha_*\Vert_{C^3_-}.
\end{equation}
Therefore, we just need to estimate the left-hand side of \eqref{eq:estimates1} against $\Vert R_{\alpha_z}-R_\st\Vert_{C^2}$. We know that $\dot\gamma_z=R_{\alpha_z}(\gamma_z)$, which yields $\|\dot \gamma_z-R_\st \|_{C^0}\leq \|R_{\alpha_z}-R_\st \|_{C^2}$. For the higher derivatives, we just observe that $\Vert\dot\gamma_z\Vert_{C^0}$ is uniformly bounded by $1+AC_{\mathfrak D}$ and
\begin{align*}
\frac{\di}{\di t}(\dot\gamma_z-R_\st)&=\ddot\gamma_z=\di_{\gamma_z} R_{\alpha_z}\cdot\dot\gamma_z=\di_{\gamma_z} (R_{\alpha_z}-R_\st)\cdot\dot\gamma_z;\\
\frac{\di^2}{\di t^2}(\dot\gamma_z-R_\st)&=\di^2_{\gamma_z} R_{\alpha_z}(\dot\gamma_z,\dot\gamma_z)+\di_{\gamma_z}R_{\alpha_z}\cdot\ddot\gamma_z\\
&=\di^2_{\gamma_z} (R_{\alpha_z}-R_\st)(\dot\gamma_z,\dot\gamma_z)+\di_{\gamma_z}(R_{\alpha_z}-R_\st)\frac{\di}{\di t}(\dot\gamma_z-R_\st).
\end{align*}
This shows \eqref{eq:estimates1}. Finally, integrating $\dot \phi_z$ and $\dot x_z$ and using \eqref{eq:estimates1}, we obtain \eqref{eq:estimates2}.
\end{proof}
\begin{prp}\label{prp:gin}
There exist $C_2>0$, and for all real numbers $T$ in the interval $(1,2)$, a radius $\epsilon_1=\epsilon_1(T)\in(0,\epsilon_0]$ such that for all $\alpha\in\mathcal B(\epsilon_1)$ the following properties are true:
\begin{enumerate}[(i)]
\item A periodic orbit $\gamma$ of $\Phi^\alpha$ belongs to the set $\mathcal P_T(\alpha)$ if and only if for all $z\in Z$ such that $\gamma(0)\in\Sigma'_z$, then $\gamma$ is contained in $\Sigma_{z}$ and $\gamma_\rep$ is homotopic to $\bar\gamma$ within $\Sigma_{z}$. In this case, if we set $\gamma_z:=\Df_z^{-1}\circ \gamma$, $\bar\gamma_z:=\Df_z^{-1}\circ\bar\gamma$, there holds
\begin{equation*}
\big|T(\gamma)-1\big|\leq C_2\|\alpha-\alpha_*\|_{C^3_-},\qquad \|{\gamma}_{z,\rep}-\bar\gamma_z\|_{C^3}\leq C_2\|\alpha-\alpha_*\|_{C^3_-}.
\end{equation*}
\item The set $\mathcal P_T(\alpha,\mathfrak h)$ is compact, non-empty and coincides with $\mathcal P_T(\alpha)$.
\end{enumerate}  
\end{prp}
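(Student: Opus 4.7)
The plan is to deduce both parts from the quantitative control on Reeb orbits in Darboux charts provided by Lemma~\ref{lem:gin}. First I choose $\epsilon_1\in(0,\epsilon_0]$ so that, for every $\alpha\in\mathcal B(\epsilon_1)$ and every $z\in Z$: the displacement bound in \eqref{eq:estimates2} satisfies $C_1 T\epsilon_1<a/2$ (keeping orbits starting in $\Sigma'_z$ inside $\Sigma_z$); the real-valued lift $\wh\phi_z$ of the angular coordinate is strictly increasing on the time interval $[0,T]$; and the inequalities $T(1+C_1\epsilon_1)<2$ and $1/(1-C_1\epsilon_1)<T$ both hold. These are finitely many open conditions, simultaneously satisfiable because $T\in(1,2)$ and $C_1$ is absolute.

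For part~(i), the forward direction proceeds by a bootstrap on the maximal time $t_*\leq T(\gamma)$ for which $\gamma([0,t_*])\subset\Sigma_z$. As long as $t\leq t_*$, \eqref{eq:estimates2} gives $|x_z(t)|\leq a/2+C_1 T\epsilon_1<a$, so $x_z(t_*)$ lies in the interior of $B$ and the orbit can be extended inside $\Sigma_z$; hence $t_*=T(\gamma)$ and $\gamma\subset\Sigma_z$. The periodicity $\gamma(T(\gamma))=\gamma(0)$ forces $\wh\phi_z(T(\gamma))\in\Z$, and the estimate $|\wh\phi_z(T(\gamma))-T(\gamma)|\leq C_1 T(\gamma)\epsilon_1$, together with $\wh\phi_z(t)>0$ for $t>0$ and $T(\gamma)\leq T<2/(1+C_1\epsilon_1)$, pins this integer down to $1$. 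This produces the homotopy of $\gamma_{\rep}$ to $\bar\gamma$ within $\Sigma_z$, since $\bar\gamma_z$ traces the same one loop in the $\phi$-direction at fixed $x_z$. The reverse direction reverses this logic: the homotopy forces $\wh\phi_z(T(\gamma))=1$, and the lower bound $\wh\phi_z(t)\geq t(1-C_1\epsilon_1)$ yields $T(\gamma)\leq 1/(1-C_1\epsilon_1)<T$. The quantitative bounds $|T(\gamma)-1|\leq C_2\|\alpha-\alpha_*\|_{C^3_-}$ and $\|\gamma_{z,\rep}-\bar\gamma_z\|_{C^3}\leq C_2\|\alpha-\alpha_*\|_{C^3_-}$ then follow from \eqref{eq:estimates1}-\eqref{eq:estimates2} after reparametrizing $\gamma_z$ by $T(\gamma)$ on $S^1$ and absorbing the factor $T(\gamma)\approx 1$ into $C_2$.

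Part~(ii) splits into three steps. The identity $\mathcal P_T(\alpha,\mathfrak h)=\mathcal P_T(\alpha)$ is immediate from~(i), because the homotopy of $\gamma_{\rep}$ to $\bar\gamma$ within $\Sigma_z$ a fortiori holds in $\Sigma$, and $\bar\gamma$ is an oriented $\mathfrak p$-fibre and thus represents $\mathfrak h$. Compactness follows from Arzel\`a--Ascoli: for a sequence $(\gamma_n)\subset\mathcal P_T(\alpha,\mathfrak h)$, by (DF1) the basepoints lie in some fixed $\Sigma'_z$ up to a subsequence, the uniform $C^3$-bound on $\gamma_{n,z,\rep}$ from~(i) provides a $C^2$-convergent subsequence, and the limit is tangent to $R_\alpha$ and periodic of period $\leq T$. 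Non-emptiness is the delicate step; I would argue it by a Morse--Bott/Lyapunov--Schmidt reduction around the fibre family $\{\gamma_q=\mathfrak p^{-1}(q)\}_{q\in M}$, which is a Morse--Bott critical manifold of the $\alpha_*$-action on the loop space with common critical value $1$. For $\alpha\in\mathcal B(\epsilon_1)$ one integrates out the transverse-to-fibre directions to obtain a $C^2$-small reduced function $A\colon M\to\R$ whose critical points correspond to genuine periodic orbits of $\Phi^\alpha$ close in $C^3$ to fibres, and hence lie in $\mathcal P_T(\alpha,\mathfrak h)$ by part~(i). Since $M$ is a closed surface, $A$ attains at least a maximum and a minimum, giving non-emptiness.

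I expect the main obstacle to be making the Lyapunov--Schmidt reduction quantitative and uniform over $\mathcal B(\epsilon_1)$ at the $C^3_-$-closeness scale, in particular controlling the derivatives of $A$ and verifying that the implicit-function step which eliminates the transverse variables applies uniformly throughout the chosen neighbourhood, so that the correspondence between $\Crit A$ and elements of $\mathcal P_T(\alpha,\mathfrak h)$ holds with the same radius $\epsilon_1$ already used in part~(i).
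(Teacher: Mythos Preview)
Your proposal is correct and follows essentially the same route as the paper: part~(i) is handled exactly via the chart estimates of Lemma~\ref{lem:gin} to trap the orbit in $\Sigma_z$, pin $\wh\phi_z(T(\gamma))$ to $1$, and read off the $C^3$-bounds; part~(ii) uses Arzel\`a--Ascoli for compactness, part~(i) for the identification with $\mathcal P_T(\alpha,\mathfrak h)$, and a reduced finite-dimensional functional for non-emptiness. The obstacle you flag is real but not something the paper works out by hand: it simply cites the construction of a smooth function $S_\alpha$ on $\Sigma$ (equivalently on $M$) from \cite[Section~III]{Gin87} and \cite[Section~3.2]{APB14}, whose critical set equals the union of supports of orbits in $\mathcal P_T(\alpha)$, and absorbs any extra smallness needed by shrinking $\epsilon_1$ further---so you need not verify that the implicit-function radius matches the $\epsilon_1$ from part~(i).
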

\begin{proof}
We claim that item (i) holds with
\[
\epsilon_1:=\frac{1}{2C_1}\min\Big\{d_*,2-T,T-1\Big\},\qquad C_2:=10\cdot C_1.
\]
Moreover, if a periodic curve $\gamma$ is contained in $\Sigma_z$, then we can write $\gamma=(x_\gamma,\phi_\gamma)$ in the coordinates $\Df_{z}$. Moreover, if $\wh \phi_{\gamma}:\R\to\R$ is the unique lift of $\phi_\gamma-\phi_\gamma(0)$ such that $\wh \phi_{\gamma}(0)=0$, then $\wh\phi_\gamma(T(\gamma))=1$ if and only if $\gamma_\rep$ is homotopic to $\bar\gamma$ within $\Sigma_z$.

Let us now assume that $\alpha\in\mathcal B(\epsilon_1)$ and that $\gamma\in\mathcal P_{T}(\alpha)$. Let us take $z\in Z$ such that $\gamma(0)\in \Sigma'_z$. Since $T<2$, inequalities \eqref{eq:estimates2} and $\bf (DF2)$ imply that $\gamma$ is contained in $\Sigma_z$. By \eqref{eq:estimates1}, we see that
\[
\dot\phi_\gamma\geq 1-|1-\dot\phi_\gamma|\geq 1-\Vert \dot\gamma_z-R_{\mathrm{st}}\Vert_{C^2}> 1-C_1\epsilon_1\geq \tfrac12>0.
\]
Hence, $\wh\phi_\gamma\big(T(\gamma)\big)>0$. On the other hand, using \eqref{eq:estimates2} and the fact that $\epsilon_1\leq \tfrac{2-T}{2C_1}$, we get
\[
\wh\phi_\gamma\big(T(\gamma)\big)< T(\gamma)+C_1T\epsilon_1\leq  T+2C_1\epsilon_1\leq 2.
\]
Since $\wh\phi_\gamma\big(T(\gamma)\big)$ is an integer, we conclude that $\wh\phi_\gamma\big(T(\gamma)\big)=1$.

Conversely, we assume that $\gamma=(x_\gamma,\phi_\gamma)\subset \Sigma_z$ and that $\gamma_{\mathrm{rep}}$ is homotopic to $\bar\gamma$ inside $\Sigma_z$ and prove that $\gamma\in\mathcal P_T(\alpha)$. The curve $\gamma$ is prime since $\wh\phi_\gamma\big(T(\gamma)\big)=1$ has no non-trivial integer divisor. Substituting $t=T(\gamma)$ in the second inequality in \eqref{eq:estimates2} yields 
\begin{equation}\label{eq:period_1}
|T(\gamma)-1|\leq C_1 T(\gamma)\|\alpha-\alpha_*\|_{C^3_-}.
\end{equation}
Using that $\|\alpha-\alpha_*\|_{C^3_-}<\epsilon_1$, we solve for $T(\gamma)$ and get $T(\gamma)<(1-C_1 \epsilon_1)^{-1}$. This implies that $T(\gamma)\leq T$ since  
\[
1-C_1\epsilon_1\geq 1-\frac{T-1}{2}\geq 1-\frac{T-1}{T}=\frac1T.
\]

We suppose that $\gamma\in\mathcal P_T(\alpha)$ and prove the estimates in item (i). The first inequality comes from \eqref{eq:period_1} using that $T(\gamma)< 2$ and $C_2\geq 2C_1$. For the second inequality, exploiting \eqref{eq:estimates2} and \eqref{eq:period_1} we have
\begin{align*}
|{\gamma}_{z,\rep}(s)-\bar\gamma_z(s)|&\leq |x_\gamma(sT(\gamma))|+|\wh\phi_\gamma(s T(\gamma))-s|\\
&\leq C_1\|\alpha-\alpha_*\|_{C^3_-}T(\gamma)+|\wh\phi_\gamma(sT(\gamma))-sT(\gamma)|+|T(\gamma)-1|s\\
&\leq C_1\|\alpha-\alpha_*\|_{C^3_-}T(\gamma)+C_1\|\alpha-\alpha_*\|_{C^3_-}T(\gamma)+|T(\gamma)-1|\\
& \leq 6C_1\|\alpha-\alpha_*\|_{C^3_-}.
\end{align*}
The higher derivatives can be bounded through \eqref{eq:estimates1} and  \eqref{eq:period_1}:
\begin{align*}
\Big\|\frac{\di \gamma_{z,\rep}}{\di s}-\frac{\di\bar\gamma_z}{\di s}\Big\|_{C^{2}}&\leq \big\|T(\gamma)(\dot x_\gamma,\dot\phi_\gamma)_\text{rep}-R_\st\big\|_{C^{2}}\\
&\leq T\big\|(\dot x_\gamma,\dot\phi_\gamma-1)_\text{rep}\big\|_{C^2}+|T(\gamma)-1|\\
&\leq T\cdot T^2\big\|(\dot x_\gamma,\dot\phi_\gamma-1)\big\|_{C^2}\!+2C_1\|\alpha-\alpha_*\|_{C^3_-}\\
&\leq 2^3 C_1\|\alpha-\alpha_*\|_{C^3_-}+2C_1\|\alpha-\alpha_*\|_{C^3_-}.
\end{align*}
Let us prove (ii). From \cite[Section III]{Gin87} or \cite[Section 3.2]{APB14}, up to shrinking $\epsilon_1$, for every $\alpha\in\mathcal B(\epsilon_1)$, there exists a differentiable function $S_\alpha:\Sigma\to\R$ with the following property. The set $\Crit S_\alpha$ is the union of the supports of the orbits $\gamma\in\mathcal P_T(\alpha)$. Therefore, $\mathcal P_T(\alpha)$ is non-empty as $\Crit S_\alpha$ is non-empty. The set $\mathcal P_T(\alpha)$ is also compact by the Arzel\`a-Ascoli theorem, as its elements have uniformly bounded period, and $\Sigma$ is compact. Finally, by item (i) we have $\mathcal P_T(\alpha)=\mathcal P_T(\alpha,\mathfrak h)$.
\end{proof}

\subsection{Bringing the Reeb flow to normal form}
In this subsection, we show that if $\alpha$ lies in $\mathcal B(\epsilon_1)$ and $\gamma\in\mathcal P_T(\alpha,\mathfrak h)$, we can suppose that $\gamma$ is a given flow line of $R_*$, up to rescaling $\alpha$ and applying a diffeomorphism of $\Sigma$. 

\begin{lem}\label{l:diffeo_equi}
There is a constant $C_3>0$ with the following property. For all $z_0,z_1\in \Sigma$, there exists an $S^1$-equivariant diffeomorphism $\Psi_{z_0,z_1}:\Sigma\rightarrow\Sigma$ isotopic to the identity with
\begin{equation*}
\bullet\quad \Psi_{z_0,z_1}(z_0)=z_1,\qquad\bullet\quad \Psi_{z_0,z_1}^*\alpha_*=\alpha_*,\qquad \bullet\quad \|\di\Psi_{z_0,z_1}\|_{C^2}\leq C_3,\ \ \|\di(\Psi_{z_0,z_1}^{-1})\|_{C^2}\leq C_3.
\end{equation*}
\end{lem}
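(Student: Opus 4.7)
The plan is to construct $\Psi_{z_0,z_1}$ as the composition of a strict contactomorphism of $(\Sigma,\alpha_*)$ covering a Hamiltonian diffeomorphism of the base $(M,\om_*)$, followed by a rotation along the Reeb flow of $\alpha_*$. The first factor will take the fibre over $\mathfrak p(z_0)$ to the fibre over $\mathfrak p(z_1)$, while the second factor will rotate the image of $z_0$ inside this fibre so that it matches $z_1$. The uniform derivative bounds will come from a Hamiltonian whose size does not depend on the chosen base points.

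Concretely, I would first choose a time-dependent Hamiltonian $h_t\in C^\infty(M)$, $t\in[0,1]$, whose isotopy $\psi_t$ with respect to $\om_*$ satisfies $\psi_1(\mathfrak p(z_0))=\mathfrak p(z_1)$, and whose $C^4$-norm is uniformly bounded by a constant depending only on $(M,\om_*)$. Pulling $h_t$ back to $\Sigma$ via $H_t:=h_t\circ\mathfrak p$ yields a basic, hence $S^1$-invariant, contact Hamiltonian. The corresponding strict contact vector field $V_t$, characterised by $\alpha_*(V_t)=H_t$ and $\mathcal L_{V_t}\alpha_*=0$, decomposes as $V_t=H_tR_*+\wt X_{h_t}$, where $\wt X_{h_t}$ is the horizontal lift of the Hamiltonian vector field $X_{h_t}$ with respect to the connection $\alpha_*$. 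Its flow $\Phi^V_t$ commutes with $\Phi^{\alpha_*}$ and preserves $\alpha_*$, and the time-$1$ map $\Psi:=\Phi^V_1$ covers $\psi_1$. Since $\Psi(z_0)$ and $z_1$ lie in the same $\mathfrak p$-fibre, there exists a unique $s\in S^1$ with $\Phi^{\alpha_*}_s(\Psi(z_0))=z_1$, and I set $\Psi_{z_0,z_1}:=\Phi^{\alpha_*}_s\circ\Psi$. The isotopy $t\mapsto\Phi^{\alpha_*}_{ts}\circ\Phi^V_t$ joins the identity to $\Psi_{z_0,z_1}$ through $S^1$-equivariant strict contactomorphisms.

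The $C^2$-bound on $\di\Psi_{z_0,z_1}$ and on its inverse will then follow from the $C^k$-dependence of ODE flows on their generating vector fields, once $V_t$ is controlled in $C^3$; the latter reduces to the uniform $C^4$-bound on $h_t$ together with the intrinsic data of the connection $\alpha_*$, while the Reeb flow on the compact manifold $\Sigma$ contributes uniformly bounded derivatives in every order. The main technical obstacle is therefore the construction of $h_t$ with uniform $C^4$-norm independent of $q_0,q_1$. I would handle this by a combinatorial cover argument: fix a finite Darboux atlas $\{(U_i,\varphi_i)\}_{i\in I}$ for $(M,\om_*)$ together with a shrinking $\{U_i'\}$ satisfying $\overline{U_i'}\subset U_i$ and $\bigcup_i U_i'=M$; inside each chart, any two points of $U_i'$ can be joined by a Hamiltonian isotopy generated by a compactly supported cutoff of an affine function in Darboux coordinates, with $C^4$-norm bounded by a constant depending only on the chart data. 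By connectedness of $M$ and finiteness of the cover, any $q_0,q_1\in M$ can be connected through a chain of at most $|I|$ such isotopies, and concatenating the associated Hamiltonians yields the desired $h_t$. The bound for $\Psi_{z_0,z_1}^{-1}$ follows by applying the same construction with the roles of $z_0$ and $z_1$ interchanged.
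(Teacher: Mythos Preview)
Your argument is correct and uses the same two ingredients as the paper: strict contactomorphisms generated by contact Hamiltonians, and a chain argument through a finite Darboux cover to get uniform $C^2$-bounds on the differential. The organisation differs slightly: the paper works directly in the $S^1$-equivariant Darboux charts $\Df_z:B\times S^1\to\Sigma$ on $\Sigma$ and, in each chart, uses a single contact Hamiltonian $K_{x'}=K_0\cdot(\hat\phi'+g_\st(x,ix'))$ whose flow moves the base coordinate and the fibre coordinate simultaneously; you instead lift a base Hamiltonian $h_t$ on $(M,\om_*)$ to a basic contact Hamiltonian $h_t\circ\mathfrak p$ on $\Sigma$ and then correct the fibre position by a single Reeb rotation $\Phi^{\alpha_*}_s$ at the end. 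Your decomposition is arguably cleaner conceptually, while the paper's version keeps everything inside the local charts already set up for later use.

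One small slip: the bound on $\di(\Psi_{z_0,z_1}^{-1})$ does \emph{not} follow from ``applying the same construction with the roles of $z_0$ and $z_1$ interchanged'', since that produces a map $\Psi_{z_1,z_0}$ which is generally not $(\Psi_{z_0,z_1})^{-1}$. The correct justification is the one you already have in hand: $(\Phi^V_1)^{-1}$ is the time-$1$ map of the flow generated by $-V_{1-t}$, which enjoys the same $C^3$-bound as $V_t$, and $(\Phi^{\alpha_*}_s)^{-1}=\Phi^{\alpha_*}_{-s}$ is controlled uniformly by compactness of $\Sigma$.
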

\begin{proof}
We start with a local construction. Let $K_0:B\to [0,1]$ be a function which is equal to $1$ in a neighbourhood of $B'$ and whose support is contained in the interior of $B$. For every $(x',\phi')\in B'\times S^1$, let $\hat\phi'\in[0,1)$ be a lift of $\phi'$. We define
\[
K_{1}:B\to \R,\qquad K_{1}(x):=\hat\phi'+g_\st(x,i x').
\]
We let $K_{x'}:B\to\R$ be the function $K_{x'}:=K_0K_1$ and $\Phi^X_t$ the flow on $B\times S^1$ generated by the unique vector field $X$ such that
\[
\alpha_{\op{st}}(X)=K_{x'}\circ\mathfrak p_\st,\qquad \iota_X\di \alpha_{\op{st}}=-\di( K_{x'}\circ\mathfrak p_\st).
\]
Namely, $K_{x'}\circ\mathfrak p_\st$ is the contact Hamiltonian of $\Phi^X_t$ according to \cite[Section 2.3]{Gei08}. The vector field $X$ is compactly supported and an application of Moser's trick shows that
\begin{equation}\label{e:moserloc}
(\Phi^X_t)^*\alpha_{\op{st}}=\alpha_{\op{st}}, \qquad\forall\, t\in\R.
\end{equation}
The flow $\Phi^X_t$ lifts the Hamiltonian flow of the function $K_{x'}$ with respect to $\omega_{\mathrm{st}}$ on $B$. Moreover, since the curve $t\mapsto (tx',0)$ is $\alpha_\st$-Legendrian and $K_{x'}(tx')=\hat\phi'$, we see that
\[
\Phi^X_t(0,0)=(tx',t\wh\phi')\in B'\times S^1,\qquad \forall\,t\in[0,1].
\]
Then, the map $\Psi_{B,(x',\phi')}:=\Phi^X_1$ is a compactly supported diffeomorphism of $B\times S^1$ sending $(0,0)$ to $(x',\phi')$ and there exists a positive constant $C'$, independent of $(x',\phi')$, such that
\begin{equation}\label{e:diffeopsilocal}
\Vert \di\Psi_{B,(x',\phi')}\Vert_{C^2}\leq C', \ \ \Vert \di(\Psi_{B,(x',\phi')}^{-1})\Vert_{C^2}\leq C'.
\end{equation}
This completes the local construction. For the global argument, we observe that there exists $m\in\N^*$ independent of $z_0,z_1$ and a chain of points
\[
(z_u)\subset \Sigma,\qquad u\in U:=\big\{ ju_1\ \big|\ j=0,\cdots,m \big\},\qquad u_1:=1/m
\]
such that
\[
\forall\, u\in U\setminus\{1\}, \quad \exists\, y_u\in Z,\quad z_u,z_{u+u_1}\in \Sigma'_{y_u}.
\]
We construct $\Psi_{z_0,z_1}$ as the composition of $m$ maps $\Psi_{z_u,z_{u+u_1}}:\Sigma\to\Sigma$, $u\in U\setminus\{1\}$. Consider the trivialisation $\Df_{y_u}:B\times S^1\to \Sigma_{y_u}$ and define
\begin{align*}
\Psi_{z_u,z_{u+u_1}}:\Sigma\to \Sigma,\qquad\, \Psi_{z_u,z_{u+u_1}}:=\Df_{y_u}\circ\Big( \Psi_{B,\Df_{y_u}^{-1}(z_{u+u_1})}\circ \Psi_{B,\Df_{y_u}^{-1}(z_{u})}^{-1}\Big)\circ\Df_{y_u}^{-1}.
\end{align*}
The lemma follows from \eqref{e:moserloc} and \textbf{(DF3)} together with \eqref{e:diffeopsilocal} and the classical estimate on the $C^2$-norm of the differential of a composition of maps. In particular, the constant $C_3$ that we find depends only on $\Sigma$ and the Darboux family.
\end{proof}
\begin{dfn}\label{d:gamma*}
Let us fix a reference point $z_*\in Z$ with $q_*:=\mathfrak p(z_*)$ and define $\gamma_*:S^1\to\Sigma$ to be the prime periodic orbit of $R_*$ passing through $z_*$ at time $0$. We say that a contact form $\alpha$ is \textbf{normalised}, if $\gamma_*\in\mathcal P(\alpha)$. For every $\epsilon\in(0,\epsilon_0]$, we define the set
\begin{equation*}\label{e:alphanormal}
\mathcal B_*(\epsilon):=\big\{\,\alpha\in\mathcal B(\epsilon)\ \big|\ \alpha\text{ is normalised}\,\big\}.
\end{equation*} 
\end{dfn}
\begin{dfn}\label{d:rescale}
Let $c$ be a positive number and $\Psi:\Sigma\to\Sigma$ a diffeomorphism. For every contact form $\alpha$ on $\Sigma$, we write $\alpha_{c,\Psi}:=\frac{1}{c}\Psi^*\alpha$, so that $\Vol(\alpha)=c^2\Vol(\alpha_{c,\Psi})$ and we have a bijection 
\[
\begin{array}{rcl}
P(\alpha)&\longrightarrow&\mathcal P(\alpha_{c,\Psi}),\\ \gamma&\longmapsto&\gamma_{c,\Psi}
\end{array}\qquad \left\{\begin{aligned}
\gamma_{c,\Psi}(s):&=(\Psi^{-1}\circ\gamma)(cs),\ \forall\, s\in\R,\\ T(\gamma_{c,\Psi})&=\tfrac1c T(\gamma).
\end{aligned}\right.
\]
\end{dfn}
The next result is analogous to \cite[Proposition 3.10]{ABHS15}.
\begin{prp}\label{prp:normal}
Let $T$ be a number in $(1,2)$. For every $\epsilon_2\in(0,\epsilon_0]$, there is $\epsilon_3\in(0,\epsilon_0]$ (depending on $\epsilon_2$ and $T$) with the following properties. For all $\alpha\in\mathcal B(\epsilon_3)$ and all $\gamma\in\mathcal P_T(\alpha,\mathfrak h)$, there exists a diffeomorphism $\Psi:\Sigma\to \Sigma$ isotopic to the identity such that
\begin{equation*}
\alpha_{T(\gamma),\Psi}\in\mathcal B_*(\epsilon_2),\qquad \gamma_{T(\gamma),\Psi}=\gamma_*.
\end{equation*}
Moreover, the bijection $\mathcal P(\alpha)\to\mathcal P(\alpha_{T(\gamma),\Psi})$ restricts to a bijection $\mathcal P_T(\alpha,\mathfrak h)\to\mathcal P_T(\alpha_{T(\gamma),\Psi},\mathfrak h)$.
\end{prp}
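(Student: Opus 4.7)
The plan is to construct $\Psi$ as a composition $\Psi = \Psi_1 \circ \Psi_2$, where $\Psi_1$ translates $z_*$ to $\gamma(0)$ while preserving $\alpha_*$ and $\Psi_2$ locally bends the straight orbit $\gamma_*$ onto the transferred orbit. Throughout, $\epsilon_3 \in (0, \epsilon_0]$ will be shrunk as needed to fulfill all smallness conditions below; in particular $\epsilon_3 \leq \epsilon_1(T)$. For the translation step, apply Lemma \ref{l:diffeo_equi} with $z_0 = z_*$, $z_1 = \gamma(0)$ to obtain an $S^1$-equivariant diffeomorphism $\Psi_1 := \Psi_{z_*, \gamma(0)}$, isotopic to the identity, with $\Psi_1(z_*) = \gamma(0)$, $\Psi_1^* \alpha_* = \alpha_*$, and uniformly $C^2$-bounded derivatives. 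Put $\alpha_1 := \alpha_{T(\gamma), \Psi_1}$ and $\gamma_1 := \gamma_{T(\gamma), \Psi_1}$, so that $\gamma_1$ is a prime $\alpha_1$-Reeb orbit of period $1$ in the class $\mathfrak h$, starting at $z_*$. Combining $\Psi_1^* \alpha_* = \alpha_*$, the estimate $|T(\gamma) - 1| \leq C_2 \|\alpha - \alpha_*\|_{C^3_-}$ from Proposition \ref{prp:gin}(i), and the uniform control on $\Psi_1$ yields a constant $A_1$ with $\|\alpha_1 - \alpha_*\|_{C^3_-} \leq A_1 \|\alpha - \alpha_*\|_{C^3_-}$.

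For the deformation step, Proposition \ref{prp:gin}(i) applied to $(\alpha_1, \gamma_1)$ shows that $\gamma_1 \subset \Sigma_{z_*}$ and that in the chart $\Df_{z_*}$ one has $(\gamma_1)_{z_*}(s) = (X(s), s + h(s))$ with $\|X\|_{C^3} + \|h\|_{C^3}$ bounded by a constant times $\|\alpha_1 - \alpha_*\|_{C^3_-}$. Fix a cutoff $\chi : B \to [0,1]$ supported in the interior of $B$ and equal to $1$ near $0$, and define $\Psi_2 : \Sigma \to \Sigma$ to be the identity outside $\Sigma_{z_*}$ and
\[
\Psi_2(x, \phi) := \bigl( x + \chi(x) X(\phi),\ \phi + \chi(x) h(\phi) \bigr)
\]
in the chart $\Df_{z_*}$. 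For $\epsilon_3$ small, $\Psi_2$ is a diffeomorphism isotopic to the identity (via the path obtained by rescaling $(X,h)$ by $t \in [0,1]$), and $\|\Psi_2 - \id\|_{C^3}$ is bounded by a constant times $\|\alpha_1 - \alpha_*\|_{C^3_-}$; since $\chi(0) = 1$ and $\gamma_*(s) = \Df_{z_*}(0, s)$, we get $\Psi_2(\gamma_*(s)) = \gamma_1(s)$ pointwise.

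Setting $\Psi := \Psi_1 \circ \Psi_2$, one verifies directly that $\alpha_{T(\gamma), \Psi} = \Psi_2^* \alpha_1$ and $\gamma_{T(\gamma), \Psi} = \Psi_2^{-1} \circ \gamma_1 = \gamma_*$, so $\alpha_{T(\gamma), \Psi}$ is normalised and $\gamma_{T(\gamma), \Psi} = \gamma_*$. Decomposing $\Psi_2^* \alpha_1 - \alpha_* = \Psi_2^*(\alpha_1 - \alpha_*) + (\Psi_2^* \alpha_* - \alpha_*)$ and bounding each piece---the first via the $C^3$-boundedness of $\Psi_2$ and the earlier step, the second via a mean-value estimate of the form $\|\Psi_2^* \alpha_* - \alpha_*\|_{C^3_-} \leq C(\alpha_*) \|\Psi_2 - \id\|_{C^3}$---yields $\|\alpha_{T(\gamma), \Psi} - \alpha_*\|_{C^3_-} \leq A \|\alpha - \alpha_*\|_{C^3_-}$. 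Choosing $\epsilon_3 \leq \epsilon_2/A$ (and small enough for all prior conditions) forces $\alpha_{T(\gamma), \Psi} \in \mathcal B_*(\epsilon_2)$. Finally, the bijection $\mathcal P(\alpha) \to \mathcal P(\alpha_{T(\gamma), \Psi})$ preserves free-homotopy classes (as $\Psi$ is isotopic to the identity) and rescales periods by $1/T(\gamma)$; since Proposition \ref{prp:gin}(i) applied to both $\alpha$ and $\alpha_{T(\gamma), \Psi}$ (the latter also $\epsilon_1(T)$-close to $\alpha_*$ for $\epsilon_3$ small) forces every relevant period to lie within $O(\epsilon_3)$ of $1$, the constraint ``period $\leq T$'' is preserved in both directions for $\epsilon_3$ small, producing the asserted bijection.

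The main technical hurdle is the deformation step: producing a single global diffeomorphism of $\Sigma$, isotopic and quantitatively $C^3$-close to the identity, that sends $\gamma_*$ pointwise onto $\gamma_1$. The exact match between the $C^3$-control on $\Psi_2 - \id$ required by the pullback estimate in $C^3_-$ and the $C^3$-closeness of $(\gamma_1)_{z_*}$ to $(\gamma_*)_{z_*}$ supplied by Proposition \ref{prp:gin}(i) is precisely what motivates the choice of the $C^3_-$-norm on contact forms.
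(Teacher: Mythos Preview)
Your proof is correct and follows essentially the same approach as the paper: translate $z_*$ to $\gamma(0)$ via the equivariant map from Lemma~\ref{l:diffeo_equi}, then use a compactly supported cut-off diffeomorphism in the chart $\mathfrak D_{z_*}$ to carry $\gamma_*$ onto the transferred orbit, with all estimates driven by the $C^3$-closeness from Proposition~\ref{prp:gin}(i). The only cosmetic differences are that you absorb the $1/T(\gamma)$-rescaling into the first step rather than the last, and that you invoke a direct Lipschitz bound $\|\Psi_2^*\alpha_*-\alpha_*\|_{C^3_-}\leq C(\alpha_*)\|\Psi_2-\id\|_{C^3}$ where the paper uses a softer $\epsilon$-$\delta$ continuity argument; both are valid and lead to the same conclusion.
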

\begin{proof}
Let $\alpha$ be an element of $\mathcal B(\epsilon_3)$ for some $\epsilon_3\leq\epsilon_1$ to be determined later on, and let $\gamma$ be a periodic orbit in $\mathcal P_T(\alpha,\mathfrak h)$. Here the constant $\epsilon_1$ is given by Proposition \ref{prp:gin}. We apply Lemma \ref{l:diffeo_equi} with $z_0=z_*$ and $z_1=\gamma(0)$ and get a diffeomorphism $\Psi_1:=\Psi_{z_*,\gamma(0)}:\Sigma\to\Sigma$ and a constant $C_3$ satisfying the properties described therein. We abbreviate $\alpha_1:=\Psi_{1}^*\alpha$. We get some $C'\geq 1$ depending on $C_3$ such that
\begin{equation}\label{e:alphaomega}
\Vert \alpha_1-\alpha_*\Vert_{C^3_-}\leq C'\Vert \alpha-\alpha_*\Vert_{C^3_-}.  
\end{equation}
The periodic curve $\gamma_1:=\Psi_{1}^{-1}\circ\gamma$ belongs to $\mathcal P_{T}(\alpha_1,\mathfrak h)$ and has period $T(\gamma)$. As $\gamma_1(0)=z_*$, we have $\bar\gamma_1=\gamma_*$. If $\epsilon_3\leq \tfrac{1}{C''}\epsilon_1$, then $\alpha_1\in\mathcal B(\epsilon_1)$ and Proposition \ref{prp:gin} implies that $\gamma_1\in\Sigma_{z_*}$ and
\begin{equation}\label{e:gammabar}
\|(\gamma_1)_{z_*,\rep}-(\gamma_*)_{z_*}\|_{C^3} \leq C''\Vert \alpha-\alpha_*\Vert_{C^3_-},\qquad C'':=C_2C'.
\end{equation}
We write $(\gamma_1)_{z_*,\rep}=(x_1,\phi_1)$ in the coordinates given by $\Df_{z_*}$. If $\epsilon_3$ is small enough, from \eqref{e:gammabar}, we see that $\Vert x_1\Vert_{C^0}<1/2$ and the map $\phi_1:S^1\rightarrow S^1$ is a diffeomorphism of degree $1$. In particular, there exists a unique map $\Delta\phi_1:S^1\to\R$, which lifts $\phi_1-\id_{S^1}$. We define a diffeomorphism $\Psi_{2,z_*}:B\times S^1\to B\times S^1$ by
\begin{equation*}
\Psi_{2,z_*}(x,s)=\Big(x+K\big(|x|\big)x_{1}(s),\ s+K\big(|x|\big)\Delta\phi_1(s)\Big),\quad \forall\,(x,s)\in B\x S^1,
\end{equation*}
where $K:[0,1]\rightarrow[0,1]$ is a function which is equal to $1$ on $[0,1/2]$ and equal to $0$ close to $1$. By \eqref{e:gammabar}, we have
\[
\Vert \Psi_{2,z_*}-\id_{B\times S^1}\Vert_{C^3}\leq C''\Vert K\Vert_{C^3}\Vert \alpha-\alpha_*\Vert_{C^3_-},
\]
which also implies
\begin{equation}\label{e:psiz2}
\Vert \di\Psi_{2,z_*}\Vert_{C^2}\leq 1+C''\Vert K\Vert_{C^3}\tfrac{1}{C'}\epsilon_1.
\end{equation}
Since $\Psi_{2,z_*}$ is compactly supported in the interior of $B\times S^1$, we can define $\Psi_2:\Sigma\to\Sigma$ as $\Psi_2:=\Df_{z_*}\circ \Psi_{2,z_*}\circ\Df^{-1}_{z_*}$ inside $\Sigma_{z_*}$ and as the identity in $\Sigma\setminus\Sigma_{z_*}$. We have $\Psi_2\circ\gamma_*=\gamma_{1,\rep}$, and thanks to \eqref{e:psiz2}, we see that $\Vert \di\Psi_2\Vert_{C^2}$ is bounded by a constant depending only on the Darboux family and on $C''\Vert K\Vert_{C^3}\tfrac{1}{C'}\epsilon_1$. Therefore, there is also a constant $C'''>0$ depending on the same quantities such that
\begin{equation}\label{e:psiz3}
\Vert\Psi_2^*(\alpha_1-\alpha_*)\Vert_{C^3_-}\leq C'''\Vert\alpha_1-\alpha_*\Vert_{C^3_-}.
\end{equation} 
We define
\[
\Psi:=\Psi_1\circ\Psi_2:\Sigma\to\Sigma,\qquad \epsilon_2':=\min\Big\{\epsilon_2,\epsilon_1,\frac{1}{C_2}\frac{T-1}{T+1}\Big\},
\]
and prove that $\alpha_{T(\gamma),\Psi}$ belongs to $\mathcal B_*(\epsilon'_2)$, provided $\epsilon_3$ is suitably small. We take
\[
\delta_0:=\frac{\epsilon_2'}{(T+1)C_{\mathfrak D}},
\]
and let $\delta_{1}>0$ be such that
\begin{equation}\label{e:psiz4}
\Vert \Psi_{2,z_*}-\id_{B\times S^1}\Vert_{C^3}\leq \delta_{1}\quad \Longrightarrow\quad \Vert (\Psi_{2,z_*})^*\alpha_\st-\alpha_\st\Vert_{C^3_-}\leq\delta_0.
\end{equation}
We assume further that
\[
\epsilon_3\leq \min\Big\{\frac{\delta_1}{C''\Vert K\Vert_{C^3}},\frac{1}{C_2}\frac{T-1}{T+1}\Big\}.
\]
This implies that $\Vert \Psi_{2,z_*}-\id_{B\times S^1}\Vert_{C^3}\leq \delta_{1}$ and we compute
\begin{align*}
\Vert \alpha_{T(\gamma),\Psi}-\alpha_*\Vert_{C^3_-}\leq &\left|\frac{1}{T(\gamma)}-1\right|\Vert \alpha_*\Vert_{C^3_-}+\frac{1}{T(\gamma)}\Vert \Psi^*\alpha-\alpha_*\Vert_{C^3_-}.
\end{align*}
For the first summand of the right-hand side, we first estimate $T(\gamma)^{-1}\leq \tfrac12(T+1)$ and then
\[
\left|\frac{1}{T(\gamma)}-1\right|\Vert \alpha_*\Vert_{C^3_-}\leq \frac{T+1}{2}C_2\epsilon_3\Vert \alpha_*\Vert_{C^3_-}.
\]
For the second summand, we estimate
\begin{align*}
\Vert \Psi^*\alpha-\alpha_*\Vert_{C^3_-}&\leq\Vert\Psi_2^*(\alpha_1-\alpha_*)\Vert_{C^3_-}+\Vert \Psi_2^*\alpha_*-\alpha_*\Vert_{C^3_-}\\
&\leq C'''\Vert\alpha_1-\alpha_*\Vert_{C^3_-}+C_{\mathfrak D}\Vert (\Psi_{2,z_*})^*\alpha_\st-\alpha_\st\Vert_{C^3_-}\\
&\leq C'C'''\epsilon_3+C_{\mathfrak D}\delta_0,
\end{align*}
where we used \eqref{e:constantcd}, \eqref{e:alphaomega}, \eqref{e:psiz3}, and \eqref{e:psiz4}.
Using the definition of $\delta_0$ and putting the computations together, we find that
\begin{equation*}
\Vert \alpha_{T(\gamma),\Psi}-\alpha_*\Vert_{C^3_-}\leq \frac{T+1}{2}\Big(C_2\Vert\alpha_*\Vert_{C^3_-}+C'C'''\Big)\epsilon_3+\tfrac12\epsilon'_2.
\end{equation*}
The quantity on the right is smaller than $\epsilon_2'\leq \epsilon_2$, if $\epsilon_3$ is small enough. Finally, we compute
\[
\gamma_{T(\gamma),\Psi}=\Psi^{-1}\circ \gamma_\rep=\Psi^{-1}_2\circ \Psi_1^{-1}\circ \gamma_{\rep}=\Psi_2^{-1}\circ\gamma_{1,\rep}=\gamma_*.
\]

Let us now deal with the second part of the statement. Let $\widetilde\gamma\mapsto\widetilde \gamma_{T(\gamma),\Psi}$ be the bijection between $\mathcal P(\alpha)$ and $\mathcal P(\alpha_{T(\gamma),\Psi})$ introduced in Definition \ref{d:rescale}. Let us assume that $T(\widetilde\gamma)\leq T$. Since $\epsilon_3\leq \epsilon_1$ we can use Proposition \ref{prp:gin}.(i), and from $C_2\epsilon_3\leq \tfrac{T-1}{T+1}$, we see that
\begin{align*}
T(\widetilde \gamma_{T(\gamma),\Psi})=\frac{T(\widetilde \gamma)}{T(\gamma)}\leq \frac{1+C_2\epsilon_3}{1-C_2\epsilon_3}\leq T.
\end{align*}
Assume, conversely, that $T(\widetilde\gamma_{T(\gamma),\Psi})\leq T$. Since $\epsilon_2'\leq\epsilon_1$, we can use Proposition \ref{prp:gin}.(i) and find that
\[
T(\widetilde\gamma)=T(\widetilde\gamma_{T(\gamma),\Psi})T(\gamma)\leq (1+C_2\epsilon'_2)(1+C_2\epsilon_3)\leq \frac{2T}{T+1}\frac{2T}{T+1}=T\frac{4T}{(T+1)^2}\leq T.\qedhere
\]
\end{proof}

\subsection{Preparing the surface of section}\label{ss:surface}

As in the previous subsection, let $z_*$ be a reference point on $\Sigma$ with $q_*:=\mathfrak p(z_*)$ and set
\[
\check M:=M\setminus \{q_*\}.
\]
Let $e\in H^2_\dR(M)$ be minus the real Euler class of $\mathfrak p$ and let us adopt the notation
\[
t_\Sigma:=\langle e,[M]\rangle>0,
\]
where, as observed in the introduction, $\langle e,[M]\rangle=|H_1^\mathrm{tor}(\Sigma;\Z)|$. We define the annulus
\[
\mathbb A:=[0,a)\times S^1.
\]
We consider the inclusion $\mathfrak i_1:\mathring \A\to \A$, where $\mathring{\A}=(0,a)\times S^1$, and the map
\[
\mathfrak i_2:\mathring\A\to \check M,\qquad \mathfrak i_2(r,\theta)=\df_{z_*}(re^{2\pi i\theta}),
\]
where we identify the domain of $\mathfrak d_{z_*}$ with a subset of the complex plane. We glue together $\mathbb A$ and $\check M$ along the maps $\mathfrak i_1$ and $\mathfrak i_2$ to get a smooth compact surface $N$ with the same genus as $M$ and one boundary component denoted by $\p N$. Namely, we have the following commutative diagram
\[
\xymatrix{
\mathring\A \ar[r]^-{\mathfrak i_2} \ar[d]_{\mathfrak i_1}&\check M\ar[d]\\
\mathbb A\ar[r]& N}
\]
so that $\check M$ is diffeomorphic to the interior $\mathring N=N\setminus\p N$ and $\mathbb A$ to a collar neighbourhood of $\p N$. On $\check M$ we have the orientation given by $\om_*$, while on $\A$ the one given by $\di r\wedge\di\theta$. These two orientations glue together to an orientation of $N$, since $\mathfrak i_1$ and $\mathfrak i_2$ are orientation preserving. Using the usual convention of putting the outward normal first, we see that the orientation induced on $\p N$ is given by $-\di\theta$. As for $M$ and $\Sigma$, we fix on $N$ some auxiliary Riemannian metric to compute norms of sections, and distances between points and between diffeomorphisms. In particular, we write the $C^1$-distance on the space of diffeomorphisms from $N$ to itself as
\begin{equation*}\label{e:distc1}
\dist_{C^1}:\mathrm{Diff}(N)\times\mathrm{Diff}(N)\to\R.
\end{equation*}

Consider now the map
\[
S_{\mathbb A}:\mathbb A\to \Sigma,\qquad S_\A(r,\theta)=\Df_{z_*}(re^{2\pi i\theta},-t_\Sigma\theta)
\]
and observe that, for all $\theta\in S^1$, there holds $S_\A(0,\theta)=\gamma_*(-t_\Sigma\theta)$, so that
\begin{equation}\label{e:dsaboundary}
\di_{(0,\theta)}S_\A\cdot\p_\theta=-t_\Sigma R_*.
\end{equation}
The map $S_{\mathbb A}\circ \mathfrak i_2^{-1}: \mathfrak i_2(\mathring\A)\to \Sigma$ is a local section of the bundle $\mathfrak p$ with a singularity of order $-t_\Sigma$ at $q_*$. Since $-t_\Sigma$ is the Euler number of $\mathfrak p$, this section extends to a section on $\check M$ and yields a map $S_{\check M}:\check M\to\Sigma$. By the commutativity of the diagram above, we get a map $S:N\to \Sigma$ fitting into the diagram
\begin{equation*}
\xymatrix{&\mathbb A\ar[ld]\ar[rd]^{S_{\mathbb A}}&\\
N\ar[rr]^S& &\Sigma\,.\\ &\check M\ar[lu]\ar[ru]_{S_{\check M}}&}
\end{equation*}
Moreover, $S_{\check M}^*\di\alpha_*=(\mathfrak p\circ S_{\check M})^*\om_* = \om_*$, and $S_\A^*\di\alpha_*=r\di r\wedge\di \theta$. In particular, $S^*\di\alpha_*$ is a two-form on $N$, which is symplectic on the interior of $N$ and vanishes of order $1$ at the boundary of $N$. The one-form
\[
\lambda_*:=S^*\alpha_*
\]
is a primitive for $S^*\di\alpha_*$ such that
\begin{equation*}
\lambda_*|_\A=(\Df_{z_*}^{-1}\circ S_\A)^*(\di\phi+\mathfrak p_\st^*\lambda_\st)=\big(-t_\Sigma+\tfrac12r^2\big)\di\theta.
\end{equation*}
If $\alpha$ is a normalised form, so that $R_\alpha=R_*$ on $\mathfrak p^{-1}(q_*)$, we set
\begin{equation*}\label{e:deflambda1}
\lambda:=S^*\alpha,
\end{equation*}
and by equation \eqref{e:dsaboundary}, we have
\begin{equation}\label{e:boundaryproperties}
\bullet\ \ \lambda\big|_{\ta(\p N)}=\lambda_*\big|_{\ta(\p N)},\qquad \bullet \ \ \di\lambda=0\ \  \text{at\ }\p N.
\end{equation}
\begin{prp}\label{p:nub}
For all $\epsilon_4>0$, there exists a number $\epsilon_5\in(0,\epsilon_0]$ such that, if $\alpha\in\mathcal B_*(\epsilon_5)$, there exist a map $\zeta:N\to N$ isotopic to the identity and a function $b:N\to\R$ satisfying the following properties.
\begin{align*}
(i)&\ \ \text{Triviality at the boundary:}&&\zeta|_{\p N}=\id_{\p N},\quad b|_{\p N}=0,\\[1ex]
(ii)&\ \ \text{$C^1$-smallness:}&&\max\big\{\dist_{C^1}(\zeta,\id_N),\Vert b\Vert_{C^1}\big\}<\epsilon_4,\\[1ex]
(iii)&\ \ \text{Uniformisation:}&&\zeta^*\lambda-\lambda_*=\di b.
\end{align*}
\end{prp}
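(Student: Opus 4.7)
The strategy is a Moser-type interpolation between $\lambda_*$ and $\lambda$, with a preliminary exact-form correction to tame the first-order vanishing of $d\lambda_*$ at $\partial N$. Set $\sigma := \lambda - \lambda_*$; by \eqref{e:boundaryproperties}, $\sigma$ vanishes on $T\partial N$ and $d\sigma$ vanishes at $\partial N$. In the collar coordinates $(r,\theta)\in[0,a)\times S^1$ of $\mathbb A$, writing $\sigma = A\,dr + B\,d\theta$, these conditions translate respectively into $B|_{r=0}=0$ and the compatibility $\partial_r B(0,\theta) = \partial_\theta A(0,\theta)$. Both $d\lambda_*$ and $d\lambda$ have the form $g(r,\theta)\,dr\wedge d\theta$ with $g = r + O(r\epsilon_5)$ near $\partial N$, so the linear interpolation $\Omega_t := (1-t)\,d\lambda_* + t\,d\lambda$ is symplectic on $\mathring N$ and vanishes of order one at $\partial N$ for $\epsilon_5$ small enough.

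The first step is to modify $\sigma$ by an exact differential. Using the compatibility condition and a partition of unity, construct a smooth $f: N\to \R$ vanishing on $\partial N$ with $\partial_r f|_{r=0} = A|_{r=0}$ and $\partial_r^2 f|_{r=0} = \partial_r A|_{r=0}$, satisfying $\|f\|_{C^2} \leq C_f \|\sigma\|_{C^2}$. Setting $\sigma' := \sigma - df$, a direct Taylor expansion shows that both collar components of $\sigma'$ vanish to order at least two at $r=0$.

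Now apply the Moser trick: solve $\iota_{X_t}\Omega_t = -\sigma'$ for a time-dependent vector field $X_t$. In the collar, $X_t = -(B'/g_t)\partial_r + (A'/g_t)\partial_\theta$ where $A',B'$ are the components of $\sigma'$; since $A',B' = O(r^2)$ and $g_t = r(1+O(\epsilon_5))$, the field $X_t$ extends to a $C^1$-vector field on $N$ with $X_t|_{\partial N} = 0$ and $\|X_t\|_{C^1} \leq C_X \|\alpha - \alpha_*\|_{C^3_-}$ via \eqref{e:constantcd}. Its flow $\zeta_t$ preserves $N$, fixes $\partial N$ pointwise, is isotopic to the identity, and is $C^1$-close to it. Setting $\lambda_t := \lambda_* + t\sigma$, the Moser computation yields
\[
\tfrac{d}{dt}\zeta_t^*\lambda_t = \zeta_t^*\bigl(d\iota_{X_t}\lambda_t + \iota_{X_t}\Omega_t + \sigma\bigr) = \zeta_t^*\bigl(d\iota_{X_t}\lambda_t + df\bigr) = d\bigl((\iota_{X_t}\lambda_t + f)\circ\zeta_t\bigr),
\]
and integrating from $0$ to $1$ gives $\zeta^*\lambda - \lambda_* = db$ with $\zeta := \zeta_1$ and $b := \int_0^1 (\iota_{X_t}\lambda_t + f)\circ\zeta_t\,dt$. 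Since $X_t$ and $f$ both vanish on $\partial N$ and $\zeta_t|_{\partial N} = \id$, we have $b|_{\partial N} = 0$ and $\|b\|_{C^1} \leq C_b \|\alpha - \alpha_*\|_{C^3_-}$. Choosing $\epsilon_5$ so that $\max\{C_X, C_b\}\epsilon_5 < \epsilon_4$ completes the proof.

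The main obstacle is the degeneracy of $d\lambda_*$ at $\partial N$, which makes the naive Moser equation $\iota_X d\lambda_* = -\sigma$ singular at the boundary. The preliminary correction $\sigma \mapsto \sigma - df$, enabled by the boundary conditions in \eqref{e:boundaryproperties}, absorbs the leading boundary behavior of $\sigma$ into an exact differential and renders the Moser vector field regular up to $\partial N$, allowing the standard Moser conclusion to go through while keeping $\zeta$ trivial and $b$ vanishing on $\partial N$.
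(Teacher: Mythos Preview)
Your strategy—Moser interpolation combined with an exact correction to absorb the leading boundary behaviour of $\sigma:=\lambda-\lambda_*$—is exactly the paper's. The Moser computation you give is correct, and the identification of the degeneracy of $\di\lambda_*$ at $\partial N$ as the main obstacle is on target.

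There is, however, a gap in the $C^1$-estimate on $X_t$. You build $f$ from the boundary jets $A|_{r=0}$ and $\partial_r A|_{r=0}$ and state only $\|f\|_{C^2}\leq C_f\|\sigma\|_{C^2}$. But your vector field has angular component $X_t^\theta=A'/g_t$ with $A'=A-\partial_r f$ and $g_t=r\,h_t$. To bound $\|X_t^\theta\|_{C^1}$ you need $\|A'/r\|_{C^1}$, which by Hadamard is controlled by $\|A'\|_{C^2}$, hence by $\|\partial_r f\|_{C^2}$—a \emph{third}-derivative bound on $f$. For a generic jet extension (say $f=\chi(r)[rA(0,\theta)+\tfrac{r^2}{2}\partial_rA(0,\theta)]$) this third derivative involves $\partial_\theta^2\partial_r A(0,\theta)$, which is a third derivative of $\sigma$ and is \emph{not} controlled by $\|\alpha-\alpha_*\|_{C^3_-}=\|\alpha-\alpha_*\|_{C^2}+\|\di\alpha-\di\alpha_*\|_{C^2}$. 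The same loss appears for $B'$. So the asserted inequality $\|X_t\|_{C^1}\leq C_X\|\alpha-\alpha_*\|_{C^3_-}$ does not follow from what you have written.

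The fix is to be specific about $f$: take, on the collar, the primitive
\[
f(r,\theta)=\int_0^r A(r',\theta)\,\di r'
\]
(cut off away from $\partial N$). This satisfies your jet conditions, but more importantly it kills the $\di r$-component of $\sigma'$ \emph{exactly} on the inner collar, so $X_t^\theta\equiv 0$ there. The remaining $\di\theta$-component is $B'(r,\theta)=B-\int_0^r\partial_\theta A\,\di r'$, which has the key feature
\[
\partial_r B'=\partial_r B-\partial_\theta A=\text{density of }\di\sigma.
\]
Hence $B'/r=\int_0^1\partial_r B'(vr,\theta)\,\di v$ is $C^1$-controlled directly by $\|\di\sigma\|_{C^2}\leq\|\alpha-\alpha_*\|_{C^3_-}$, and the $C^1$-smallness of $X_t=-(B'/r)/h_t\,\partial_r$ follows. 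This is precisely what the paper does (with the notation $c_1=A$, $c_2=B$, $c_3=B'$, $h_{\mathbb A}=f$), and it is the point at which the extra control on $\|\di\alpha-\di\alpha_*\|_{C^2}$ in the $C^3_-$-norm is genuinely used.
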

\begin{proof}
Let $\alpha\in\mathcal B_*(\epsilon_5)$, for some $\epsilon_5\in(0,\epsilon_0]$ to be determined. For all $u\in[0,1]$, we define $\lambda_u:=\lambda_*+u(\lambda-\lambda_*)$. On $\A$, we get
\[
\lambda-\lambda_*=c_1\di r+c_2\di\theta,\qquad \di\lambda_*=r\di r\wedge\di\theta,\qquad\di\lambda=f\di r\wedge\di\theta,\qquad \di\lambda_u=(r+u(f-r))\di r\wedge\di\theta,
\]
for some functions $c_1,c_2,f:\A\to\R$. By \eqref{e:boundaryproperties}, we have $c_2(0,\theta)=0$ and $f(0,\theta)=0$. Define the auxiliary function
\[
c_3:\A\to\R,\qquad c_3(r,\theta):=c_2(r,\theta)-\int_0^r\p_\theta c_1(r',\theta)\di r'.
\]
From the definition of $c_1,c_2,c_3$ and $f$, we have the chain of identities
\[
(\p_r c_3)\di r\wedge\di\theta=(\p_r c_2-\p_\theta c_1)\di r\wedge\di\theta=\di(\lambda-\lambda_*)=\di\lambda-\di\lambda_*=(f-r)\di r\wedge\di\theta,
\]
which implies
\begin{equation*}
\p_r c_3=f-r.
\end{equation*}
As a result, $c_3(0,\theta)=0, \p_r c_3(0,\theta)=0$ and there exists a function $\hat c_3:\A\to \R$ with $c_3=r\hat c_3$, $\hat c_3|_{\p N}=0$, and a function $\hat f:\A\to\R$ with $f=r\hat f$, defined by
\begin{align*}
\hat c_3(r,\theta):=\int_0^1\p_rc_3(vr,\theta)\di v=\int_0^1\big(f(vr,\theta)-vr\big)\di v,\qquad \hat f(r,\theta):=\int_0^1\p_rf(vr,\theta)\di v.
\end{align*}
In particular,
\begin{equation}\label{e:omegau}
\di\lambda_u=r\big(1+u(\hat f-1)\big)\di r\wedge \di\theta
\end{equation}
and we have the estimate
\begin{equation}\label{e:estc3fhat}
\max\big\{\Vert \hat c_3\Vert_{C^2},\Vert \hat f-1\Vert_{C^1}\big\}\leq \Vert f-r\Vert_{C^2}.
\end{equation}

We now look for paths $u\mapsto\zeta_u$ and $u\mapsto b_u$ with $\zeta_0=\id_N$ and $b_0=0$ such that
\begin{equation*}
\zeta_u^*\lambda_u-\di b_u=\lambda_*
\end{equation*}
so that, for $u=1$, we get a solution to item (iii) in the statement.
Let $X_u$ denote the vector field generating $\zeta_u$ and set $a_u:=\tfrac{\di}{\di u}b_u$. By differentiating the equation above with respect to $u$, we find that such an equation can be solved for $\zeta_u$ and $b_u$ if and only if
\begin{equation*}
(\lambda-\lambda_*)+\iota_{X_u}\di\lambda_u+\di\big(\lambda_u(X_u)\big)-\di (a_u\circ\zeta_u^{-1})=0.
\end{equation*}
Introducing an auxiliary function $h:N\to\R$, we see that $(X_u,a_u)$ is a solution if and only if
\begin{equation}\label{e:mosertwo}
\left\{\begin{aligned}\iota_{X_u}\di\lambda_u&=-(\lambda-\lambda_*)+\di h,\\
a_u&=\big(\lambda_u(X_u)+h\big)\circ \zeta_u.\end{aligned}\right.
\end{equation}
We define 
\[
\A':=[0,a/2)\times S^1
\]
and choose $h:=h_{\A}\cdot K$, where $K:N\to[0,1]$ is a bump function which is equal to $1$ on $\A'$ and its support is contained in $\A$, and $h_{\A}:\A\to \R$ is defined by
\[
h_{\A}(r,\theta):=\int_0^rc_1(r',\theta)\di r'.
\]
This function has the crucial property that
\begin{equation}\label{e:lambdalambdastar}
\lambda-\lambda_*-\di h= c_1\di r+c_2\di\theta-c_1\di r-\Big(\int_0^r\p_\theta c_1(r',\theta)\di r'\Big)\di\theta=r\hat c_3\di\theta\qquad \text{on}\ \ \A',
\end{equation}
which implies that the first equation in \eqref{e:mosertwo} admits a smooth solution $X_u$. Indeed, on the annulus $\A'$, we divide both sides of the equation by $r$ and using \eqref{e:omegau},  \eqref{e:lambdalambdastar}, we get
\[
X_u=-\frac{\hat c_3}{1+u(\hat f-1)}\partial_r.
\]
On $N\setminus \A'$, $X_u$ is uniquely determined by the fact that $\di\lambda_u|_{N\setminus \A'}$ is symplectic. The vector $X_u$ vanishes at $\p N$, since $\hat c_3$ vanishes there, as observed before. This shows that $\zeta_u$ is the identity at the boundary.
If we choose $\epsilon_5$ small, we see that the $C^2$-norm of $c_1$, $c_2$ and $f-r$ are small, and consequently, also the $C^2$-norm of $h$.
By \eqref{e:estc3fhat}, we conclude that the $C^1$-norm of $X_u$ is small, as well. As a consequence, also $\dist_{C^1}(\zeta_u,\id_N)$ is small. Therefore, by defining $\zeta:=\zeta_1$ and taking $\epsilon_5$ small enough, we get $\dist_{C^1}(\zeta,\id_N)<\epsilon_4$. We can now define $a_u$ through the second equation in \eqref{e:mosertwo}. From the estimates on $\lambda,X_u,\zeta_u$ and $h$, we see that, if $\epsilon_5$ is small, $\Vert a_u\Vert_{C^1}<\epsilon_4$ and the same is true for $b:=b_1$. Since $h$ and $X_u$ vanish at the boundary, we also have $a_u|_{\p N}=0$, and as $b_0|_{\p N}=0$, the function $b$ vanishes at the boundary, as well.
\end{proof}
\subsection{The open book decomposition and the first return map}
Combining the map $S$ with the Reeb flow of $\alpha_*$, we get a rational open book for $\Sigma$:
\begin{equation*}\label{e:xins1}
\begin{aligned}
\Xi:N\x S^1&\longrightarrow\Sigma\\
(q,s)&\longmapsto \Phi^{\alpha_*}_s\big(S(q)).
\end{aligned}
\end{equation*}
If $\mathfrak i_N:N\into N\x S^1$ is the canonical embedding $\mathfrak i_N(x)=(x,0)$, then $S=\Xi\circ\mathfrak i_N$. On the collar neighbourhood $\A\times S^1$ of $\p (N\times S^1)$, $\Xi$ has the coordinate expression
\begin{equation}\label{e:xia}
\begin{aligned}
\Xi_\A:\A\x S^1&\longrightarrow B\x S^1\\
\big((r,\theta),s\big)&\longmapsto \big(r e^{2\pi i\theta},s-t_\Sigma\theta\big).
\end{aligned}
\end{equation}
The restricted map $\mathring \Xi:\mathring N\times S^1\to \mathfrak p^{-1}(\check M)=\Sigma\setminus\mathfrak p^{-1}(q_*)$ is a diffeomorphism, and $\mathring{\Xi}^*\alpha_*$ is a contact form on $\mathring N\x S^1$ with Reeb vector field $R_{\mathring{\Xi}^*\alpha_*}=\p_s$, which smoothly extends to the whole $N\times S^1$. If we write ${\mathfrak i}_{\p N\times S^1}:\p N\times S^1\to N\times S^1$ for the standard embedding of the boundary, the map $\Xi\circ{\mathfrak i}_{\p N\times S^1}:\p N\times S^1\to \mathfrak p^{-1}(q_*)\subset\Sigma$ has the coordinate expression $(\theta,s)\mapsto (s-t_\Sigma\theta)$. Therefore we have
\begin{equation}\label{e:xiboundary}
\di(\Xi\circ {\mathfrak i}_{\p N\times S^1})\cdot \partial_\theta=-t_\Sigma R_*,\qquad \di(\Xi\circ{\mathfrak i}_{\p N\times S^1})\cdot\partial_s=R_*.
\end{equation}
If $\alpha$ is a normalised contact form, we define the pull-back form
\begin{equation*}\label{def:betaxialpha}
\beta:=\Xi^*\alpha.
\end{equation*}
The next result is the analogue of \cite[Proposition 3.6]{ABHS15}.
\begin{prp}\label{prop:vector_field_openbook}
If $\alpha$ is a normalised contact form, then
\begin{enumerate}[(i)]
\item There hold
\begin{equation*}
{\mathfrak i}_{\p N\times S^1}^*\beta=\di s-t_\Sigma \di\theta,\qquad\quad \di\beta|_{\p N\times S^1}=0.
\end{equation*}
By the latter identity we mean that $\di\beta_z(\xi)=0$ for all $z\in\p N\x S^1$ and $\xi\in T_z(N\x S^1)$.
\item The Reeb vector field $R_{\mathring\Xi^*\alpha}$ of $\mathring\Xi^*\alpha$ on $\mathring N\x S^1$ smoothly extends to a vector field $R_{\beta}$ on the whole $N\x S^1$, so that, at every point in $\p(N\times S^1)$, $R_{\beta}$ is tangent to $\p (N\times S^1)$.
\item If we denote by $\Phi^\beta$ the flow of $R_\beta$, we have
\begin{equation*}
\beta(R_{\beta})=1,\qquad \iota_{R_{\beta}}\di\beta=0,\qquad (\Phi^{\beta}_t)^*\beta=\beta,\quad \forall\, t\in\R.
\end{equation*}
\item For every $\epsilon_6>0$, there exists $\epsilon_7\in(0,\epsilon_0]$, independent of $\alpha$, such that
\begin{equation*}
\alpha\in\mathcal B_*(\epsilon_7)\quad\Longrightarrow\quad \Vert R_{\beta}-\p_s\Vert_{C^1}<\epsilon_6.
\end{equation*}
\end{enumerate}
\end{prp}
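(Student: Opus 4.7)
My plan is to work in the collar coordinates $(r,\theta,s)$ on $\A\times S^1$, where $\Xi$ has the explicit form \eqref{e:xia} after composition with the Darboux chart $\Df_{z_*}$. In the Darboux coordinates $(x_1,x_2,\phi)$ near the fibre $\mathfrak p^{-1}(q_*)$, I write the Reeb vector field as $R_\alpha=A\partial_{x_1}+B\partial_{x_2}+C\partial_\phi$; the normalisation condition $R_\alpha=R_*$ on $\mathfrak p^{-1}(q_*)$ translates into $A,B$ vanishing identically along $\{x=0\}$ and $C\equiv 1$ there. Away from $\p(N\times S^1)$, the restriction $\mathring\Xi$ is a diffeomorphism and every assertion is routine; the real work is at the boundary.

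For (i), I would combine \eqref{e:xiboundary} with $\alpha(R_\alpha)=1$ to compute $\mathfrak i_{\p N\times S^1}^*\beta(\partial_s)=\alpha(R_*)=1$ and $\mathfrak i_{\p N\times S^1}^*\beta(\partial_\theta)=-t_\Sigma$, which gives the first identity. For the vanishing of $\di\beta$ at boundary points, I observe that at any $z\in\p N\times S^1$ every tangent vector pushes forward under $\di\Xi$ to $aw+cR_*$, where $w:=\partial_r\Xi_\A|_{r=0}$ is the single horizontal direction and $a,c\in\R$; hence $\di\beta(v_1,v_2)=\di\alpha(\di\Xi\cdot v_1,\di\Xi\cdot v_2)$ is a linear combination of $\di\alpha(w,w)=0$ and $\di\alpha(w,R_\alpha)=0$, proving the full statement of (i).

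For (ii), I would solve the linear system $\di\Xi_\A\cdot R_\beta=R_\alpha\circ\Xi$ directly. Writing $R_\beta=\xi^r\partial_r+\xi^\theta\partial_\theta+\xi^s\partial_s$ and using that $\det\di\Xi_\A=2\pi r$, one finds
\[
\xi^r=A\cos(2\pi\theta)+B\sin(2\pi\theta),\quad \xi^\theta=\tfrac{1}{2\pi r}\bigl(-A\sin(2\pi\theta)+B\cos(2\pi\theta)\bigr),\quad \xi^s=C+t_\Sigma\xi^\theta,
\]
where $A,B,C$ are evaluated along $\Xi$. The apparent singularity in $\xi^\theta$ is dissolved by Hadamard's lemma applied along the submanifold $\{x=0\}$: there exist smooth functions $\tilde A_i,\tilde B_i$ with $A=x_1\tilde A_1+x_2\tilde A_2$ and analogously for $B$, so $A\circ\Xi$ and $B\circ\Xi$ are divisible by $r$. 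Substituting shows that $\xi^\theta$ and $\xi^s$ extend smoothly across $r=0$ while $\xi^r=r\cdot(\text{smooth})$ vanishes there, giving assertion (ii). Assertion (iii) is then an immediate consequence: the formulas verify $\di\Xi\cdot R_\beta=R_\alpha\circ\Xi$ even where $\di\Xi$ degenerates, so $\beta(R_\beta)=\alpha(R_\alpha)=1$ and $\di\beta(R_\beta,v)=\di\alpha(R_\alpha,\di\Xi\cdot v)=0$ throughout $N\times S^1$, while invariance of $\beta$ under $\Phi^\beta$ follows from Cartan's magic formula.

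For (iv), if $\|\alpha-\alpha_*\|_{C^3_-}$ is small then the argument used in the proof of Lemma \ref{lem:gin} yields $\|R_\alpha-R_*\|_{C^2}$ small, and hence $\|A\|_{C^2}$, $\|B\|_{C^2}$, $\|C-1\|_{C^2}$ are small in the Darboux chart. The Hadamard decomposition loses precisely one derivative: the resulting $\|\tilde A_i\|_{C^1}$ and $\|\tilde B_i\|_{C^1}$ are controlled by $\|A\|_{C^2}$ and $\|B\|_{C^2}$, so feeding these estimates into the formulas above gives the required $C^1$-control of $R_\beta-\partial_s$ on the collar; away from the boundary, $C^1$-smallness follows directly from $\mathring\Xi$ being a fixed diffeomorphism with bounded $C^2$-norms. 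The main obstacle is precisely this loss of one derivative at $\p N\times S^1$, which explains why the topology on contact forms has to be $C^3_-$ rather than $C^2$.
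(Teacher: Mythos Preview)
Your proposal is correct and follows essentially the same route as the paper's proof: both resolve the apparent $1/r$ singularity in the collar by writing the horizontal part of $R_\alpha$ as a matrix (or its components $\tilde A_i,\tilde B_i$) acting on $x$ via Hadamard's lemma, thereby losing exactly one derivative and explaining the need for the $C^3_-$-topology. The only cosmetic differences are that the paper packages the Hadamard decomposition as a matrix $W_{z_*}$ and proves (iii) by continuity from the interior rather than by your direct verification of $\di\Xi\cdot R_\beta=R_\alpha\circ\Xi$; both variants are equally valid.
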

\begin{proof}
By \eqref{e:xiboundary}, equation $\alpha(R_\alpha)=1$, and the fact that $R_\alpha=R_*$ on $\mathfrak p^{-1}(q_*)$ as $\alpha$ is normalised, we get the first equality in item (i). Since $\p N\times S^1$ has co-dimension $1$ in $N\times S^1$, to prove the second equality it is enough to show that for all vectors $v\in \ta (\p N\times S^1)$, we have $\iota_v\di\beta=0$. As $v$ is a linear combination of $\p_\theta$ and $\p_s$, this follows again from \eqref{e:xiboundary} and the fact that $R_\alpha$ annihilates $\di\alpha$.

Now we prove (ii). We set $\alpha_{z_*}:=\mathfrak D_{z_*}^*\alpha$, which is a contact form on $B\times S^1$ with corresponding Reeb vector field $R_{z_*}$. Using coordinates $(x,\phi)\in B\times S^1$, we have the splitting 
\[
R_{z_*}(x,\phi)=R^x_{z_*}(x,\phi)+R^\phi_{z_*}(x,\phi)\p_\phi.
\] 
Since $R_\alpha$ is tangent to $\mathfrak p^{-1}(q_*)$, there holds $R^x_{z_*}(0,\phi)=0$, and therefore, there exists a matrix-valued function $W_{z_*}$ such that 
\begin{equation*}
R^x_{z_*}(x,\phi) = W_{z_*}(x,\phi)\cdot x,\qquad \Vert W_{z_*}\Vert_{C^1}\leq \Vert R^x_{z_*}\Vert_{C^2},
\end{equation*}
by Lemma \ref{l:divi}. We can then write $R^x_{z_*}$ in polar coordinates on $(B\setminus\{0\})\times S^1$ as
\[
R^x_{z_*}(re^{2\pi i\theta},\phi)=g_\st\Big( W_{z_*}(re^{2\pi i\theta},\phi)\cdot re^{2\pi i\theta},e^{2\pi i\theta}\Big)\p_ r+ g_\st\Big(W_{z_*}(re^{2\pi i\theta},\phi)\cdot re^{2\pi i\theta}, \frac{ie^{2\pi i\theta}}{r}\Big)\p_\theta.
\]
In particular, if we set
\begin{equation*}
\left\{\begin{aligned}
R^r_{z_*}(r,\theta,\phi)&:= g_\st\Big(W_{z_*}(re^{2\pi i\theta},\phi)\cdot re^{2\pi i\theta},e^{2\pi i\theta}\Big),\\
R^\theta_{z_*}(r,\theta,\phi)&:= g_\st\Big( W_{z_*}(re^{2\pi i\theta},\phi)\cdot e^{2\pi i\theta},i e^{2\pi i\theta}\Big),
\end{aligned}\right.
\end{equation*}
then $R^r_{z_*}\circ \Xi_\A$ and $R^\theta_{z_*}\circ \Xi_\A$ are smooth functions on $\A\x S^1\subset N\x S^1$ with
\begin{equation}\label{e:rrthetax}
\max\Big\{\Vert R^r_{z_*}\circ \Xi_\A\Vert_{C^1},\Vert R^\theta_{z_*}\circ \Xi_\A\Vert_{C^1}\Big\}\leq (1+\Vert\di\Xi_\A\Vert_{C^0})\Vert R^x_{z_*}\Vert_{C^2}.
\end{equation}
Differentiating formula \eqref{e:xia}, we get 
\[
\di\Xi_\A\cdot\p_r=\p_r,\quad \di\Xi_\A\cdot\p_\theta=\p_\theta-t_\Sigma\p_\phi,\qquad\di\Xi_\A\cdot\p_s=\p_\phi.
\]
Thus, we conclude that 
\begin{equation*}
R_{\beta}:=(R^r_{z_*}\circ \Xi_\A)\p_r+(R_{z_*}^\theta\circ\Xi_\A)\p_\theta+\big(t_\Sigma R_{z_*}^\theta\circ \Xi_\A+R_{z_*}^\phi\circ \Xi_\A\big)\p_s
\end{equation*}
is the desired extension of $R_{\mathring\Xi^*\alpha}$ in the collar neighbourhood $\A\x S^1$ of $\p N\x S^1$. As $R^r_{z_*}\circ \Xi_\A$ vanishes at $r=0$, the extended vector field is tangent to $\p N\times S^1$ and (ii) is proven.\\[-2ex]

By the very definition of the Reeb vector field, $(\mathring\Xi^*\alpha)(R_{\mathring\Xi^*\alpha})=1$ and $\iota_{R_{\mathring\Xi^*\alpha}}\di(\mathring\Xi^*\alpha)=0$. By continuity of the extended vector field $R_{\beta}$, the first two relations in item (iii) follow. The third one is a consequence of the first two and Cartan's formula. Point (iii) is established.  \\[-2ex]

We assume that $\alpha\in\mathcal B_*(\epsilon_7)$, for some $\epsilon_7$ to be determined independently of $\alpha$ and we prove (iv) by estimating $\Vert R_{\beta}-\p_s\Vert_{C^1}$ separately on $\overline{N\setminus\A}\times S^1$ and $\A\times S^1$. Since $(\overline{N\setminus\A})\times S^1$ is compact, there exists a constant $C'>0$ depending on $\Vert \di\mathring{\Xi}\Vert_{C^2}$ but not on $\alpha$ such that 
\begin{align*}
\Vert \mathring\Xi^*\alpha-\mathring \Xi^*\alpha_*\Vert_{C^3_-}&\leq C'\Vert \alpha- \alpha_*\Vert_{C^3_-}.
\end{align*}
Therefore, as in \eqref{e:ralphaz}, $\Vert R_{\beta}-\p_s\Vert_{C^2}$ is smaller than $\epsilon_6$ on $(\overline{N\setminus\A})\times S^1$, if $\epsilon_7$ is small enough. On $\A\times S^1$, there is some $C''>0$ for which we have the inequality 
\begin{align*}
\Vert R_{\beta}-\p_s\Vert_{C^1}&\leq C''\max\Big\{\Vert R^r_{z_*}\circ\Xi_\A\Vert_{C^1},\Vert R^\theta_{z_*}\circ\Xi_\A\Vert_{C^1},\Vert R^\phi_{z_*}\circ\Xi_\A-1\Vert_{C^1}\Big\}\\
&\leq C''(1+\Vert \di \Xi_\A\Vert_{C^0})\Vert R_{z_*}-R_{\st}\Vert_{C^2}\\
&\leq C''(1+\Vert \di \Xi_\A\Vert_{C^0})A_2C_{\mathfrak D}\Vert \alpha-\alpha_*\Vert_{C^3_-},
\end{align*}
where we have used \eqref{e:rrthetax}, the equality $R_{\st}=\p_\phi$ and inequality \eqref{e:inrz}. This proves that $\Vert R_\beta-\p_s\Vert_{C^1}$ is smaller than $\epsilon_6$ on $\A\times S^1$, if $\epsilon_7$ is small enough.
\end{proof}

We can now show that $S$ is a global surface of section for $\Phi^\alpha$ with certain properties. 
\begin{dfn}
Let $\Phi$ be a flow on $\Sigma$ without rest points and $N_1$ a compact surface. A map $S_1:N_1\to \Sigma$ is a \textbf{global surface of section} for $\Phi$ if the following properties hold:
\begin{itemize}
\item The map $S_1|_{\mathring{N}_1}$ is an embedding and the map $S_1|_{\p N_1}$ is a finite cover onto its image;
\item The surface $S_1(\mathring N_1)$ is transverse to the flow $\Phi$ and $S_1(\p N_1)$ is the support of a finite collection of periodic orbits of $\Phi$;
\item For each $z\in\Sigma\setminus S_1(\p N_1)$, there are $t_-<0<t_+$ such that $\Phi_{t_-}(z)$, $\Phi_{t_+}(z)$ lie in $S_1(\mathring N_1)$.\end{itemize}
\end{dfn}
Before stating the proposition, we introduce the following notation. Let $q\in \p N\cong S^1$ and denote by $-q\in\p N$ its antipodal point. By 
\begin{equation}\label{e:lambdaxy}
\int_{q}^{q'}\lambda_*,\qquad q'\in\p N\setminus\{-q\},
\end{equation}
we mean the integral of $\lambda_*$ over any path connecting $q$ and $q'$ within $\p N\setminus\{-q\}$. This number does not depend on the choice of such path.

\begin{prp}\label{p:return}
Let $T$ be a real number in the interval $(1,2)$. For all $\epsilon_8>0$, there exists $\epsilon_9\in(0,\epsilon_0]$ with the following properties. If $\alpha\in\mathcal B_*(\epsilon_9)$, then $S:N\to \Sigma$ is a global surface of section for $\Phi^\alpha$ with the first return time admitting an extension to the boundary
\begin{equation*}
\tau:N\to\R,\qquad \tau(q):=\inf\big\{t>0\ \big|\ \Phi^{\beta}_{t}(q,0)\in N\times\{0\}\big\}
\end{equation*}
and the first return map admitting an extension to the boundary
\begin{equation*}
P:N\to N,\qquad (P(q),0):=\Phi^{\beta}_{\tau(q)}(q,0).
\end{equation*}
Moreover, the following properties hold.
\begin{align*}
(i)&\ \ \text{$C^1$-smallness:}&&\max\big\{\dist_{C^1}(P,\id_N),\Vert \tau-1\Vert_{C^1}\big\}<\epsilon_8,\\[1ex]
(ii)&\ \ \text{Normalisation:}&&\tau(q)=1+\int_{q}^{P(q)}\lambda_* ,\quad\forall\, q\in\p N,\\[1ex]
(iii)&\ \ \text{Exactness:}&&P^*\lambda=\lambda+\di \tau,\\
(iv)&\ \ \text{Volume:}&&\Vol(\alpha)=\int_N\tau\,\di\lambda,\\
(v)&\ \ \text{Fixed points:}&&q\in\mathring N\ \Longrightarrow\ \Big[\,q\in\Fix(P)\ \Longleftrightarrow\ \gamma_q(t):=\Phi^\alpha_t\big(S(q)\big)\in\mathcal P_T(\alpha,\mathfrak h)\,\Big],\\[1ex]
(vi)&\ \ \text{Period:}&& q\in\mathring N\cap\Fix(P)\ \ \Longrightarrow\ \ T(\gamma_q)=\tau(q),\\[1ex]
(vii)&\ \ \text{Zoll case:}&& \text{if $\alpha$ is Zoll, then $P= \id_N$}.
\end{align*}
\end{prp}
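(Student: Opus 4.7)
The plan is to lift everything via the open book $\Xi:N\times S^1\to\Sigma$ of Proposition \ref{prop:vector_field_openbook} and realise $\tau$, $P$ as the first-return time and map of the extended Reeb vector field $R_\beta$ to the slice $N\times\{0\}$. First I apply Proposition \ref{prop:vector_field_openbook}(iv) with $\epsilon_6$ chosen small in terms of $\epsilon_8$ and independent of $\alpha$, so that picking $\epsilon_9\leq\epsilon_7$ ensures $\Vert R_\beta-\p_s\Vert_{C^1}<\epsilon_6$. On the interior this makes $R_\beta$ transverse to $N\times\{0\}$; on the boundary torus $\p N\times S^1$, where $R_\beta$ is tangent by Proposition \ref{prop:vector_field_openbook}(ii) but still a $C^1$-small perturbation of $\p_s$, the first return to $\p N\times\{0\}$ within the torus is itself smooth and transverse. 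Standard continuous dependence of flows then defines $\tau:N\to\R$ and $P:N\to N$ smoothly with the $C^1$-closeness in (i).

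For (ii), I work on $\p N\times S^1$, where $\beta=\di s-t_\Sigma\,\di\theta$ by Proposition \ref{prop:vector_field_openbook}(i). Lifting $s$ to $\R$ along the flow segment from $(q,0)$ to $(P(q),1)$, I evaluate $\int\beta$ in two ways: as $\int\beta(R_\beta)\,\di t=\tau(q)$, and as $1-t_\Sigma(P(q)-q)$. Combined with the identification $\lambda_*|_{\p N}=-t_\Sigma\,\di\theta$---which follows from $S|_{\p N}(\theta)=\gamma_*(-t_\Sigma\theta)$ together with $\alpha_*(R_*)=1$---this gives (ii). For (iii), I exploit that $R_\beta$ is characteristic for $\di\beta$ by Proposition \ref{prop:vector_field_openbook}(iii): given a path $c:[0,1]\to\mathring N$ from $q$ to $q'$, the flow-box map $\Gamma(s,t):=\Phi^\beta_{t\tau(c(s))}(c(s),0)$ satisfies $\Gamma^*\di\beta=0$ since $\p_t\Gamma$ is proportional to $R_\beta$. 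Stokes on the unit square, together with $\beta(R_\beta)=1$ and $\mathfrak i_N^*\beta=\lambda$, yields $\tau(q')-\tau(q)=\int_c(P^*\lambda-\lambda)$, from which $\di\tau=P^*\lambda-\lambda$ follows on $\mathring N$ and extends to $N$ by continuity.

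Property (iv) rests on the same suspension viewpoint. Since $\mathring\Xi$ is a diffeomorphism onto the complement of the measure-zero set $\mathfrak p^{-1}(q_*)$, $\Vol(\alpha)=\int_{N\times S^1}\beta\wedge\di\beta$; using $\iota_{R_\beta}(\beta\wedge\di\beta)=\di\beta$, $\mathcal L_{R_\beta}\di\beta=0$, and $\di\beta|_{N\times\{0\}}=\di\lambda$, together with the decomposition of $N\times S^1$ as $\{(q,t):q\in N,\ 0\leq t\leq\tau(q)\}/{\sim}$, this integrates to $\int_N\tau\,\di\lambda$. For (v) and (vi), an interior fixed point $P(q)=q$ closes a flow line of $R_\beta$ wrapping $s$ exactly once (as $\tau(q)\approx 1$ and $R_\beta\approx\p_s$), whose image $\gamma_q$ under $\Xi$ is a periodic orbit in class $\mathfrak h$ with $T(\gamma_q)\mid\tau(q)$; as $\tau(q)\approx 1$ and, by Proposition \ref{prp:gin}(i), $T(\gamma_q)\approx 1$, necessarily $T(\gamma_q)=\tau(q)$. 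Conversely any $\gamma\in\mathcal P_T(\alpha,\mathfrak h)$ lies in $\Sigma_{z_*}$ and is homotopic to $\bar\gamma$ there by Proposition \ref{prp:gin}(i), so $\Xi^{-1}(\gamma)$ meets $N\times\{0\}$ at a fixed point of $P$. Finally for (vii), a Zoll $\alpha$ has $\Phi^\beta_{T(\alpha)}(q,0)=(q,0)$ with $T(\alpha)\approx 1$, which combined with $\tau(q)\approx 1$ forces $\tau(q)=T(\alpha)$ and $P(q)=q$ on $\mathring N$; continuity propagates $P=\id_N$ to $\p N$.

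The main subtle point I expect is the delicate treatment of the boundary: $S$ is not transverse to $\Phi^\alpha$ along $S(\p N)=\mathfrak p^{-1}(q_*)$, where $S$ collapses onto a single Reeb orbit of $\alpha_*$, so the classical first-return formalism cannot be applied on $\Sigma$ directly. The open book resolves this by trading the collapsed fiber for the smooth boundary torus $\p N\times S^1$, on which $R_\beta$ is tangent yet transverse to $\p N\times\{0\}$ within the torus. The monodromy twist by $t_\Sigma$ inherent in the open book is then absorbed into the boundary normalization (ii), and it is precisely this normalization that ensures formulas (iii)--(iv) hold globally on the closed surface $N$ rather than only on $\mathring N$.
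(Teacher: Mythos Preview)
Your overall strategy is the same as the paper's: lift to $N\times S^1$ via the open book, use the $C^1$-closeness of $R_\beta$ to $\partial_s$ from Proposition~\ref{prop:vector_field_openbook}(iv) to define $\tau$ and $P$ smoothly up to the boundary, then verify (ii)--(vii). Your arguments for (i), (ii), (iii), (iv), and (vii) are correct and essentially equivalent to the paper's, though the paper packages (iii) and (iv) together via the single map $Q(t,q)=\Phi^\beta_{t\tau(q)}(\mathfrak i_N(q))$ and the computation $Q^*\beta=\di(t\tau)+\lambda$, from which both identities drop out by restriction to $\{1\}\times N$ and by integrating $Q^*(\beta\wedge\di\beta)$ over $[0,1]\times N$. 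Your Stokes/flow-box argument for (iii) and your suspension argument for (iv) are just a rephrasing of this.

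There is, however, a genuine gap in your converse for (v). You assert that any $\gamma\in\mathcal P_T(\alpha,\mathfrak h)$ lies in $\Sigma_{z_*}$ by Proposition~\ref{prp:gin}(i). That proposition only guarantees $\gamma\subset\Sigma_z$ for those $z\in Z$ with $\gamma(0)\in\Sigma'_z$; there is no reason for $\gamma_q=\Phi^\alpha_\cdot(S(q))$ to meet $\Sigma'_{z_*}$ at all when $q$ is far from $\partial N$. The paper instead argues directly on $N\times S^1$: choose $\epsilon_6$ small enough that $\max\tau<T<2\min\tau$; if $q\neq P(q)$, then the $R_\beta$-orbit through $(q,0)$ hits $N\times\{0\}$ at two distinct points before it can close, forcing $T(\gamma_q)\geq\tau(q)+\tau(P(q))\geq 2\min\tau>T$, a contradiction. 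This counting argument is what is missing from your sketch, and it is also what makes your (vii) rigorous (you need to rule out $\tau(q)<T(\alpha)$, which follows from the same $2\min\tau>T(\alpha)$ bound). Once you insert this inequality, your proof is complete and matches the paper's.
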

\begin{proof}
Let $\epsilon_6\in(0,1)$, which we will take small enough depending on $\epsilon_8$, and let $\epsilon_7$ be the number associated with $\epsilon_6$ in Proposition \ref{prop:vector_field_openbook}. If $\alpha\in\mathcal B_*(\epsilon_7)$, then $1-\epsilon_6<\di s(R_{\beta})<1+\epsilon_6$.
This implies at once that $S:N\to\Sigma$ is a global surface of section. In particular, if $q\in N$, there exists a smallest positive time $\tau(q)$ such that $\Phi^{\beta}_{\tau(q)}(q,0)$ belongs to $N\times \{0\}$. We estimate the return time more precisely as $(1+\epsilon_6)^{-1}<\tau(q)<(1-\epsilon_6)^{-1}$. In particular, if $\epsilon_6$ is small enough, there holds
\begin{equation}\label{e:minmaxtau}
\max\tau<T<2\min \tau.
\end{equation}
Shrinking $\epsilon_6$ further, if necessary, we also get $\Vert \tau-1\Vert_{C^1}<\epsilon_8$ from Proposition \ref{prop:vector_field_openbook}.(iv).

We define the return point $P(q)$ by the equation $(P(q),0)=\Phi^{\beta}_{\tau(q)}(q,0)$. Again by Proposition \ref{prop:vector_field_openbook}.(iv), we can achieve $\dist_{C^1}(P,\id_N)<\epsilon_8$, if $\epsilon_6$ is small enough, so that item (i) is established. Let $q\in\mathring N$ and let us prove the statement in square brackets in item (v). If $q\in \Fix(P)$, then $\gamma_q$ is prime, since intersects $S(N)$ only once, and has period $\tau(q)$. By \eqref{e:minmaxtau}, we have $\tau(q)<T$. Hence, by Proposition \ref{prp:gin}.(ii) the curve $\gamma_q$ belongs to $\mathcal P_T(\alpha,\mathfrak h)$. Suppose conversely that $\gamma_q$ has period $T(\gamma_q)\leq T$. If $q\neq P(q)$, then we would get the contradiction
\[
T(\gamma_q)\geq \tau(q)+\tau(P(q))\geq 2\min \tau>T.
\]
This establishes item (v) and (vi), at once. Let us assume that $\alpha$ is Zoll. Since $\gamma_*\in\mathcal P(\alpha,\mathfrak h)$, then, if $q\in\mathring N$, the orbit $\gamma_q$ belongs to $\mathcal P(\alpha,\mathfrak h)$ and satisfies
\[
T(\gamma_q)=T(\gamma_*)=1<T.
\]
By item (v), we conclude that $q\in\Fix(P)$. This shows $\mathring N\subset\Fix(P)$, and by continuity $\Fix(P)=N$. Namely, $P=\id_N$ and item (vii) holds.

Let $q\in\p N$ and denote by $\delta_q:[0,\tau(q)]\to \p N\times S^1$ the curve $\delta_q(t)=\Phi^{\beta}_t(q,0)$. Using coordinates $(\theta,s)$ on $\p N\times S^1$, we can write $\delta_q(s)=(\theta_q(t),s_q(t))$, so that $\theta_q:[0,\tau(q)]\to\p N$ is a path between $\theta_q(0)=q$ and $\theta_q(\tau(q))=P(q)$, and $s_q(0)=0, s_q(\tau(q))=1$. We compute
\begin{align*}
\tau(q)=\int_0^{\tau(q)}\di t=\int_0^{\tau(q)}\delta_q^*({\mathfrak i}_{\p N\times S^1}^*\beta)=\int_0^{\tau(q)}\delta_q^*\big(\di s-t_\Sigma\di\theta\big)&=\int_0^{\tau(q)}\big(\di s_q+\theta_q^*(-t_\Sigma\di\theta)\big)\\
&=1+\int_0^{\tau(q)}\theta_q^*\lambda_*
\end{align*}
and the integral of $\lambda_*$ over $\theta_q$ is equal to $\int_{q}^{P(q)}\lambda_*$, as $\theta_q$ is short if $\epsilon_6$ is small enough. This establishes item (ii). Therefore, we can choose $\epsilon_9:=\epsilon_7$ in the statement of the corollary.

We prove now item (iii) and (iv) by considering the map
\[
Q:[0,1]\times N\to N\times S^1,\qquad Q(t,q):=\Phi^{\beta}_{t\,\tau(q)}({\mathfrak i}_N(q)),
\]
where $\mathfrak i_N:N\into N\x S^1$ is the canonical embedding. Its differential is given by
\[
\di_{(t,q)} Q=\di_{(t,q)}(t\,\tau)\otimes R_{\beta}(Q(t,q))+\di_{{\mathfrak i}_N(q)}\Phi^{\beta}_{t\, \tau(q)}\cdot\di_q{\mathfrak i}_N. 
\]
Hence, using Proposition \ref{prop:vector_field_openbook}.(iii) we compute
\begin{align*}
Q^*\beta=\beta_{Q}\Big(\di(t\,\tau)\otimes R_{\beta}(Q)+\di_{{\mathfrak i}_N}\Phi^{\beta}_{t\,\tau}\cdot\di{\mathfrak i}_N\Big)&=\di(t\,\tau) \beta(R_\beta) +(\Phi^{\beta}_{t\,\tau}\circ {\mathfrak i}_N)^*\beta\\
&=\di(t\,\tau)+{\mathfrak i}_N^*(\Phi^{\beta}_{t\,\tau})^*\beta\\
&=\di(t\,\tau)+{\mathfrak i}_N^*\beta\\
&=\di(t\,\tau)+\lambda.
\end{align*}
We define ${\mathfrak i}_1:N\to [0,1]\times N$ by ${\mathfrak i}_1(q)=(1,q)$ and observe that $Q\circ{\mathfrak i}_1={\mathfrak i}_N\circ P$. Therefore, 
\[
P^*\lambda=P^*{\mathfrak i}_N^*\beta=({\mathfrak i}_N\circ P)^*\beta=(Q\circ {\mathfrak i}_1)^*\beta={\mathfrak i}_1^*Q^*\beta={\mathfrak i}_1^*\big(\di(t\,\tau)+\lambda\big)=1\di\tau+\lambda.
\]
This establishes item (iii). We calculate the volume of $\alpha$  pulling back by $\Xi\circ Q$:
\begin{align*}
\Vol(\alpha)=\int_{N\times S^1}\beta\wedge\di\beta=\int_{[0,1]\times N}\big(\di(t\,\tau)+\lambda\big)\wedge\di\lambda&=\int_{[0,1]\x N}\di(t\,\tau)\wedge\di\lambda\\&=\int_{[0,1]\x N}\di\big(t\,\tau\,\di\lambda\big)\\
&=\int_N1\,\tau\,\di\lambda-\int_N0\,\tau\,\di\lambda,
\end{align*}
which yields item (iv).
\end{proof}
\subsection{Reduction to a two-dimensional problem}
Putting together all the results of this section, we are able to translate the systolic-diastolic inequality into a statement for maps on $N$. We recall the set-up. Let $\alpha_*$ be a Zoll contact form on a closed three-manifold $\Sigma$ with associated bundle $\mathfrak p:\Sigma\to M$. Let  $S:N\to \Sigma$ be a global surface of section for the Reeb flow $\Phi^\alpha$ of $\alpha\in\mathcal B_*(\epsilon_9)$ as described at the beginning of Section \ref{ss:surface} and in Proposition \ref{p:return}. Let $\lambda_*=S^*\alpha_*$ and remember that $t_\Sigma=\langle e,[M]\rangle$. The next result is the analogous of \cite[Lemma 3.7 \& Proposition 3.8]{ABHS15}

\begin{thm}\label{t:final3dim}
For any $T\in(1,2)$ and $\epsilon_{10}>0$, there is $\epsilon_{11}>0$ such that for all contact forms $\alpha'$ with $\Vert\di\alpha'-\di\alpha_*\Vert_{C^2}< \epsilon_{11}$, the set $\mathcal P_T(\alpha',\mathfrak h)$ is compact and non-empty. Moreover for every $\gamma\in\mathcal P_T(\alpha',\mathfrak h)$, there exist a diffeomorphism $\varphi:N\to N$, and a function $\sigma:N\to\R$ with the following properties.
\begin{align*}
(i)&\ \ \text{$C^1$-smallness:}&& d(\varphi,\id_N)_{C^1}<\epsilon_{10},\\[1ex]
(ii)&\ \ \text{Normalisation:}&& \sigma(q)=\int_{q}^{\varphi(q)}\lambda_*,\quad \forall q\in\p N.\\[1ex]
(iii)&\ \ \text{Exactness:}&&\varphi^*\lambda_*=\lambda_*+\di \sigma,\\[1ex]
(iv)&\ \ \text{Volume:}&&\Vol(\alpha')-t_\Sigma T(\gamma)^2=T(\gamma)^2\int_N\sigma\di\lambda_*,\\
(v)&\ \ \text{Fixed points:}&&\!\!\!\!\begin{array}[t]{l}\text{There is a map $\mathring N\cap\Fix(\varphi)\rightarrow\mathcal P_T(\alpha',\mathfrak h),\ q\mapsto \gamma_q$}\\
\text{such that $T(\gamma_q)=T(\gamma)(1+\sigma(q))$.}\end{array}\\
(vi)&\ \ \text{Zoll case:}&& \text{if $\alpha'$ is Zoll, then $\varphi= \id_N$}.
\end{align*}
\end{thm}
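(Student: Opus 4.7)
The overall strategy is to compile all the machinery of this section into a single statement by translating the data of a contact form $\alpha'$ near $\alpha_*$ into the data of a map $\varphi$ and function $\sigma$ on the surface of section $N$. First I would pass from the bound on $\di\alpha'$ to a bound on a primitive: Lemma \ref{l:estomega} provides a form $\alpha$ with $\di\alpha=\di\alpha'$ and $\alpha\in\mathcal B(C_0\epsilon_{11})$, and Lemma \ref{l:neighbourhood1} equates volumes and gives a period-preserving bijection $\mathcal P_T(\alpha',\mathfrak h)\leftrightarrow\mathcal P_T(\alpha,\mathfrak h)$. Choosing $\epsilon_{11}$ so that $C_0\epsilon_{11}\leq\epsilon_1(T)$, Proposition \ref{prp:gin}.(ii) proves that these sets are compact and non-empty, giving the first assertion. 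Given $\gamma'\in\mathcal P_T(\alpha',\mathfrak h)$ corresponding to $\gamma\in\mathcal P_T(\alpha,\mathfrak h)$ of equal period, Proposition \ref{prp:normal} furnishes a diffeomorphism $\Psi$ such that $\widetilde\alpha:=\alpha_{T(\gamma),\Psi}$ lies in $\mathcal B_*(\epsilon_2)$ with $\gamma_{T(\gamma),\Psi}=\gamma_*$.

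Now that $\widetilde\alpha$ is normalised and close to $\alpha_*$, I apply Proposition \ref{p:nub} to obtain $\zeta:N\to N$ and $b:N\to\R$ with $\zeta|_{\p N}=\id$, $b|_{\p N}=0$ and $\zeta^*\lambda=\lambda_*+\di b$, where $\lambda:=S^*\widetilde\alpha$, and Proposition \ref{p:return} to obtain the first-return time $\tau:N\to\R$ and return map $P:N\to N$ for $\widetilde\alpha$. The key definitions are
\begin{equation*}
\varphi:=\zeta^{-1}\circ P\circ\zeta,\qquad \sigma:=\tau\circ\zeta+b-b\circ\varphi-1.
\end{equation*}
By shrinking $\epsilon_{11}$ one can force the intermediate constants $\epsilon_2,\epsilon_5,\epsilon_7,\epsilon_9$ to be as small as desired, yielding the $C^1$-smallness of item~(i). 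Item~(iii) is a direct computation: using $(\zeta^{-1})^*\lambda_*=\lambda-\di(b\circ\zeta^{-1})$ and $P^*\lambda=\lambda+\di\tau$ one finds $\varphi^*\lambda_*=\lambda_*+\di(\tau\circ\zeta+b-b\circ\varphi)=\lambda_*+\di\sigma$, the additive constant $-1$ being killed by $\di$. Item~(vii) is immediate since the Zoll property passes through all reductions to $\widetilde\alpha$, so Proposition \ref{p:return}.(vii) gives $P=\id_N$ and hence $\varphi=\id_N$. Items~(v)-(vi) follow by composing the bijection of Lemma \ref{l:neighbourhood1}, the rescaling of Definition \ref{d:rescale}, and Proposition \ref{p:return}.(v)-(vi), observing that at a fixed point $q\in\mathring N$ of $\varphi$ the term $b-b\circ\varphi$ vanishes, so $\sigma(q)=\tau(\zeta(q))-1$ while $T(\gamma_q)=T(\gamma)\tau(\zeta(q))=T(\gamma)(1+\sigma(q))$.

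The most delicate items are~(ii) and~(iv). For~(ii), since $P$ preserves $\p N$ and $\zeta,b$ are trivial there, $\varphi|_{\p N}=P|_{\p N}$ and $b(\varphi(q))=0$, so $\sigma|_{\p N}=\tau|_{\p N}-1=\int_q^{\varphi(q)}\lambda_*$ by Proposition \ref{p:return}.(ii). For~(iv), using that $\zeta,\varphi$ are orientation-preserving with $\zeta^*\di\lambda=\di\lambda_*=\varphi^*\di\lambda_*$, I would deduce $\int_N(\tau\circ\zeta)\di\lambda_*=\int_N\tau\,\di\lambda$ and $\int_N(b-b\circ\varphi)\di\lambda_*=0$. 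Combining this with the topological identity $\int_N\di\lambda_*=t_\Sigma$ (which follows from Stokes and the formula $\lambda_*|_{\p N}=-t_\Sigma\di\theta$ computed from \eqref{e:dsaboundary}, given that $\p N$ is oriented by $-\di\theta$), together with Proposition \ref{p:return}.(iv) and the rescaling $\Vol(\alpha')=\Vol(\alpha)=T(\gamma)^2\Vol(\widetilde\alpha)$, yields the desired equality. The main obstacle is precisely this bookkeeping: the various reductions must be threaded without losing track of the constants $t_\Sigma$, $T(\gamma)$, and the scaling factor $T(\gamma)^2$, but no new analytic input is needed beyond what the section already provides.
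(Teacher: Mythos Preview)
Your proposal is correct and follows essentially the same approach as the paper: pass from $\alpha'$ to a primitive $\alpha$ via Lemma~\ref{l:estomega}, normalise via Proposition~\ref{prp:normal}, build $\varphi:=\zeta^{-1}\circ P\circ\zeta$ and $\sigma:=\tau\circ\zeta+b-b\circ\varphi-1$ from Propositions~\ref{p:nub} and~\ref{p:return}, and verify each item by the computations you describe. The only slip is a labelling one---what you call ``Item~(vii)'' is the theorem's item~(vi) (the Zoll case), and your ``Items~(v)--(vi)'' is just item~(v); you have borrowed the numbering of Proposition~\ref{p:return} rather than that of the theorem, but the content is right.
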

\begin{proof}
Let $C_0$ be the constant given by Lemma \ref{l:estomega} and let $\epsilon_{11}\leq \tfrac{1}{C_0}\epsilon_1$ be some positive real number, which will be determined in the course of the proof depending on $T$ and $\epsilon_{10}$. If $\alpha'$ is a contact form with $\Vert \di\alpha'-\di\alpha_*\Vert_{C^2}<\epsilon_{11}$, then Lemma \ref{l:estomega} yields a contact form $\alpha\in\mathcal B(C_0\epsilon_{11})$ with $\di\alpha=\di\alpha'$. Since $C_0\epsilon_{11}\leq \epsilon_1$, by Lemma \ref{l:neighbourhood1}, we have a period-preserving bijection $\mathcal P_T(\alpha',\mathfrak h)\rightarrow\mathcal P_T(\alpha,\mathfrak h)$ and by Proposition \ref{prp:gin}.(ii) the set $\mathcal P_T(\alpha,\mathfrak h)$ is compact and non-empty. 

We fix henceforth an element $\gamma\in\mathcal P_T(\alpha',\mathfrak h)$. If $\epsilon_2\in(0,\epsilon_1]$ is an auxiliary number, we can find a corresponding $\epsilon_3\in(0,\epsilon_0]$ according to Proposition \ref{prp:normal}, so that, if $\epsilon_{11}\leq\epsilon_3$ there exists a diffeomorphism $\Psi:\Sigma\to\Sigma$ such that $\alpha_{T(\gamma),\Psi}\in\mathcal B_*(\epsilon_2)$ and the map $\widetilde\gamma\mapsto \widetilde\gamma_{T(\gamma),\Psi}$ of Definition \ref{d:rescale} restricts to a bijection $\mathcal P_T(\alpha,\mathfrak h)\rightarrow\mathcal P_T(\alpha_{T(\gamma),\Psi},\mathfrak h)$. Thus, we get a bijection
\begin{equation}\label{e:bijectionppt}
\begin{array}{rcl}
\mathcal P_T(\alpha',\mathfrak h)&\longrightarrow&\mathcal P_T(\alpha_{c,\Psi},\mathfrak h),\\ \gamma'&\longmapsto&\gamma'_{T(\gamma),\Psi}
\end{array}\qquad T(\gamma')=T(\gamma)T(\gamma'_{T(\gamma),\Psi}).
\end{equation}
We choose now an auxiliary $\epsilon_4>0$ and get a corresponding $\epsilon_5\in(0,\epsilon_0]$ from Proposition \ref{p:nub}, so that if $\epsilon_{2}\leq \epsilon_5$, then there exist $\zeta:N\to N$ and $b:N\to\R$ associated with $\alpha_{T(\gamma),\Psi}\in\mathcal B_*(\epsilon_2)$ satisfying the properties contained therein. Finally, let $\epsilon_8>0$ be another auxiliary number and consider $\epsilon_9\in(0,\epsilon_0]$, the number given by Proposition \ref{p:return}, so that, if $\epsilon_2\leq\epsilon_9$, the statements contained therein hold for $\alpha_{T(\gamma),\Psi}$, the associated return time $\tau:N\to \R$ and return map $P:N\to N$.

Now we set
\[
\bullet\ \ \varphi:N\to N,\quad \varphi:=\zeta^{-1}\circ P\circ\zeta,\qquad \bullet\ \ \sigma:N\to\R,\quad \sigma:=\tau\circ\zeta-b\circ \varphi+b-1.
\]
First of all, we observe that by choosing $\epsilon_8$ and $\epsilon_4$ small enough, we obtain item (i). Then, we have $\varphi|_{\p N}=P|_{\p N}$ and $\sigma|_{\p N}=\tau|_{\p N}-1$, so that item (ii) follows from Proposition \ref{p:return}.(ii). As far as item (iii) is concerned, we compute 
\[
\varphi^*\lambda_*=\zeta^*P^*\lambda-\varphi^*\di b=\zeta^*(\lambda+\di\tau)-\di(b\circ\varphi)=\lambda_*+\di \big(b+\tau\circ\zeta-b\circ\varphi\big)=\lambda_*+\di\sigma.
\]
For item (iv), we recall from Lemma \ref{l:neighbourhood1}, Definition \ref{d:rescale} and Proposition \ref{p:return}.(iv) that
\[
\Vol(\alpha')=\Vol(\alpha)=T(\gamma)^2\Vol(\alpha_{T(\gamma),\Psi})=T(\gamma)^2\int_N\tau\di\lambda,
\]
and we will show that
\begin{equation}\label{e:intvolume}
\int_N\tau\di\lambda=\int_N\sigma\di\lambda_*+t_\Sigma.
\end{equation}
We can compute the integral of $\sigma\di\lambda_*$ as
\begin{align*}
\int_N\sigma\di\lambda_*=\int_N(\tau\circ\zeta)\di\lambda_*-\int_N(b\circ\varphi)\di\lambda_*+\int_N b\,\di\lambda_*-\int_N\di\lambda_*.
\end{align*}
We deal with the first summand. The map $\zeta$ preserves the orientation on $N$, as it is isotopic to the identity, and satisfies $\di\lambda_*=\zeta^*(\di \lambda)$. Hence,
\[
\int_N (\tau\circ \zeta)\di\lambda_*=\int_N (\tau\circ \zeta) \zeta^*(\di\lambda)=\int_N\zeta^*(\tau\,\di\lambda)=\int_N\tau\,\di\lambda.
\] 
The second and third summand cancel out. Indeed, as $\varphi$ preserves $\di\lambda_*$, we get
\[
\int_N (b\circ \varphi)\di\lambda_*=\int_N (b\circ \varphi) \varphi^*(\di\lambda_*)=\int_N\varphi^*(b\,\di\lambda_*)=\int_Nb\,\di\lambda_*.
\]
We deal with the last summand. By Stokes' Theorem, the fact that $\lambda_*|_{\p N}=-t_\Sigma\di\theta$ and that the induced orientation on $\p N$ is given by $-\di\theta$, we get
\[
\int_N\di\lambda_*=\int_{\p N}\lambda_*=-\int_0^1-t_\Sigma\di\theta=t_\Sigma.
\]
Plugging these last three identities in the computation above, we arrive at \eqref{e:intvolume}.

We move to item (v). We take $q\in \mathring N\cap\Fix(\varphi)$ and observe that $\zeta(q)\in\mathring N\cap\Fix(P)$. By Proposition \ref{p:return}.(v), there exists a periodic orbit $\gamma_q'\in\mathcal P(\alpha_{T(\gamma),\Psi},\mathfrak h)$ through $S(\zeta(q))$ with period $\tau(\zeta(q))\leq T$. We denote by $\gamma_q\in\mathcal P_T(\alpha',\mathfrak h)$ the orbit assigned to $\gamma'_q$ by the bijection given in \eqref{e:bijectionppt}, so that $T(\gamma_q)=T(\gamma) \tau(\zeta(q))$. Finally, we observe that
\[
\tau(\zeta(q))=1+\sigma(q)+b(\varphi(q))-b(q)=1+\sigma(q)+b(q)-b(q)=1+\sigma(q).
\]

The implication in item (vi) follows at once, since $\alpha'$ is Zoll if and only if $\alpha_{T(\gamma),\Psi}$ is Zoll by Lemma \ref{l:neighbourhood1}, and moreover, $P=\id_N$ if and only if $\varphi=\id_N$.
\end{proof}
\begin{cor}\label{c:neccond}
Suppose that we can choose $\epsilon_{10}$ in Theorem \ref{t:final3dim} so that, with the corresponding $\epsilon_{11}>0$, we have the following implications for a pair $(\varphi,\sigma)$ as above:
\begin{equation}\label{e:imply}
\begin{aligned}
\varphi\neq\id_N,\ \ \int_N\sigma\,\di\lambda_*\leq 0\quad\Longrightarrow\quad \exists\, q_-\in\mathring N\cap\Fix(\varphi),\ \ \sigma(q_-)<0,\\
\varphi\neq\id_N,\ \ \int_N\sigma\,\di\lambda_*\geq 0\quad\Longrightarrow\quad \exists\, q_+\in\mathring N\cap\Fix(\varphi),\ \ \sigma(q_+)>0.
\end{aligned}
\end{equation}
Then, Theorem \ref{t:main} holds taking $\mathcal U:=\big\{\Omega\ \text{exact two-form on }\Sigma\ \big|\ \Vert \Omega-\di\alpha_*\Vert_{C^2}<\epsilon_{11}\big\}$. 
\end{cor}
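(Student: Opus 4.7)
The corollary reduces Theorem \ref{t:main} to the abstract fixed-point/action statements \eqref{e:imply}: one chooses a period-minimising (resp.\ maximising) orbit in $\mathcal P_T(\alpha',\mathfrak h)$, translates the systolic (resp.\ diastolic) hypothesis into sign information on $\int_N\sigma\,\di\lambda_*$ via the volume identity (iv) of Theorem \ref{t:final3dim}, and then uses the fixed-point correspondence (v) together with the contrapositive of (vi) to force a contradiction from \eqref{e:imply} whenever $\alpha'$ is not Zoll.

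Fix $T\in(1,2)$ and let $\epsilon_{10},\epsilon_{11}$ be as in the hypothesis of the corollary. For every contact form $\alpha'$ with $\|\di\alpha'-\di\alpha_*\|_{C^2}<\epsilon_{11}$, Theorem \ref{t:final3dim} ensures that $\mathcal P_T(\alpha',\mathfrak h)$ is compact and non-empty, so
\[
T^*_-:=\min_{\gamma\in\mathcal P_T(\alpha',\mathfrak h)}T(\gamma),\qquad T^*_+:=\max_{\gamma\in\mathcal P_T(\alpha',\mathfrak h)}T(\gamma)
\]
are attained. Since any $\mathfrak h$-orbit of period at most $T$ automatically lies in $\mathcal P_T(\alpha',\mathfrak h)$, we have $T^*_-=T_{\min}(\alpha',\mathfrak h)$, whereas only the one-sided bound $T^*_+\leq T_{\max}(\alpha',\mathfrak h)$ is available.

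For the systolic half, assume by contradiction that $\alpha'$ is not Zoll and $\rho_{\mathrm{sys}}(\alpha',\mathfrak h)\geq 1/t_\Sigma$. Pick $\gamma_-$ attaining $T^*_-$ and apply Theorem \ref{t:final3dim} to obtain $(\varphi_-,\sigma_-)$. Formula (iv) rewrites as
\[
\Vol(\alpha')\ =\ T^{*2}_-\Bigl(t_\Sigma+\int_N\sigma_-\,\di\lambda_*\Bigr),
\]
so the systolic hypothesis reads $\int_N\sigma_-\,\di\lambda_*\leq 0$. The contrapositive of (vi) yields $\varphi_-\neq\id_N$, so the first line of \eqref{e:imply} gives $q_-\in\mathring N\cap\Fix(\varphi_-)$ with $\sigma_-(q_-)<0$. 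By (v) there is $\gamma_{q_-}\in\mathcal P_T(\alpha',\mathfrak h)\subset\mathcal P(\alpha',\mathfrak h)$ of period $T^*_-(1+\sigma_-(q_-))<T^*_-=T_{\min}(\alpha',\mathfrak h)$, the desired contradiction.

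The diastolic half is parallel: assume $\alpha'$ not Zoll and $\rho_{\mathrm{dia}}(\alpha',\mathfrak h)\leq 1/t_\Sigma$, pick $\gamma_+$ attaining $T^*_+$, extract $(\varphi_+,\sigma_+)$, and combine (iv) with $T_{\max}(\alpha',\mathfrak h)\geq T^*_+$ to get $\int_N\sigma_+\,\di\lambda_*\geq 0$. The second line of \eqref{e:imply} delivers $q_+\in\mathring N\cap\Fix(\varphi_+)$ with $\sigma_+(q_+)>0$, whence (v) produces $\gamma_{q_+}\in\mathcal P_T(\alpha',\mathfrak h)$ of period $T^*_+(1+\sigma_+(q_+))>T^*_+$, contradicting the maximality of $T^*_+$ inside $\mathcal P_T(\alpha',\mathfrak h)$. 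The Zoll case supplies $\rho_{\mathrm{sys}}(\alpha',\mathfrak h)=\rho_{\mathrm{dia}}(\alpha',\mathfrak h)=1/|H_1^\mathrm{tor}(\Sigma;\Z)|$ via the universal identity recalled in the introduction, so the characterisation of equality in Theorem \ref{t:main} follows as well. No genuine obstacle arises at this stage; the only point worth checking carefully is the identification $T^*_-=T_{\min}(\alpha',\mathfrak h)$ together with the automatic membership $\gamma_{q_\pm}\in\mathcal P_T$, both of which rely on the uniform period estimate of Proposition \ref{prp:gin}(i) transferred from $\alpha$ to $\alpha'$ through Lemmas \ref{l:estomega} and \ref{l:neighbourhood1}.
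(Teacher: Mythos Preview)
Your argument is correct and follows essentially the same route as the paper's own proof: pick a period-minimiser (resp.\ maximiser) in $\mathcal P_T(\alpha',\mathfrak h)$, use the volume identity (iv) to get sign information on $\int_N\sigma\,\di\lambda_*$, then invoke \eqref{e:imply} and (v) to manufacture a shorter (resp.\ longer) orbit in $\mathcal P_T$. Your treatment of the diastolic side, contradicting maximality of $T^*_+$ \emph{within} $\mathcal P_T(\alpha',\mathfrak h)$ rather than equating $T^*_+$ with $T_{\max}(\alpha',\mathfrak h)$, is in fact slightly cleaner than the paper's phrasing. One small slip: what you call the ``contrapositive of (vi)'' is actually the \emph{converse}; item (vi) as stated only says Zoll $\Rightarrow\varphi=\id_N$, whereas you need $\varphi=\id_N\Rightarrow$ Zoll (the paper uses this converse too, without stating it separately---it follows because $\varphi=\id_N$ forces $\di\sigma=0$ by (iii) and $\sigma|_{\partial N}=0$ by (ii), whence $\sigma\equiv0$ and all return times coincide via (v)).
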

\begin{proof}
Let $\alpha'$ be a contact form such that $\di\alpha'\in\mathcal U$ as defined in the statement. If $\alpha'$ is Zoll, the conclusion follows from Proposition \ref{p:three}. Thus, we assume that $\alpha'$ is not Zoll and we want to prove that $\rho_{\mathrm{sys}}(\alpha',\mathfrak h)<\tfrac{1}{t_\Sigma}<\rho_{\mathrm{dia}}(\alpha',\mathfrak h)$. We first prove the inequality for the systolic ratio. Suppose by contradiction that $\rho_{\mathrm{sys}}(\alpha',\mathfrak h)\geq \tfrac{1}{t_\Sigma}$ and let $\gamma\in\mathcal P_T(\alpha',\mathfrak h)$ be such that
\begin{equation}\label{e:periodmin}
T(\gamma)=T_{\min}(\alpha',\mathfrak h),
\end{equation}
where $T_{\min}(\alpha',\mathfrak h)$ is the minimal period of prime periodic $\Phi^{\alpha'}$-orbits in the class $\mathfrak h$. The orbit $\gamma$ exists by Theorem \ref{t:final3dim}. Thus, the assumption $\rho_{\mathrm{sys}}(\alpha',\mathfrak h)\geq \tfrac{1}{t_\Sigma}$ implies
\begin{equation}\label{eq:vol-kT^2leq0}
\Vol(\alpha')-t_\Sigma T(\gamma)^2\leq 0.
\end{equation}
Theorem \ref{t:final3dim} assigns to $\gamma$ the pair $(\varphi,\sigma)$ with the properties listed therein. In particular, by Theorem \ref{t:final3dim}.(iv) and \eqref{eq:vol-kT^2leq0} above, we have that
\[
\int_N\sigma\di\lambda_*\leq0.
\] 
As $\alpha'$ is not Zoll, $\varphi\neq \id_N$ and we can use the first implication in \eqref{e:imply} to produce a point $q_-\in\mathring N\cap\Fix(\varphi)$ with $\sigma(q_-)<0$. By Theorem \ref{t:final3dim}.(v), this yields an element $\gamma_{q_-}\in\mathcal P_T(\alpha',\mathfrak h)$ with $T(\gamma_{q_-})<T(\gamma)$, which contradicts \eqref{e:periodmin}. This proves  $\rho_{\mathrm{sys}}(\alpha',\mathfrak h)< \tfrac{1}{t_\Sigma}$.
\medskip

The inequality with the diastolic ratio is analogously established. Suppose by contradiction that $\rho_{\mathrm{dia}}(\alpha',\mathfrak h)\leq \frac{1}{t_\Sigma}$. We take this time $\gamma\in\mathcal P_T(\alpha',\mathfrak h)$ to satisfy
\begin{equation}\label{e:periodmax}
T(\gamma)=T_{\max}(\alpha',\mathfrak h).
\end{equation}
If the pair $(\varphi,\sigma)$ is associated with $\gamma$, the assumption $\rho_{\mathrm{dia}}(\alpha',\mathfrak h)\leq \frac{1}{t_\Sigma}$ implies
\[
\int_N\sigma\di\lambda_*\geq0.
\]
The second implication in \eqref{e:imply} and Theorem \ref{t:final3dim}.(v) yield an orbit $\gamma_{q_+}\in\mathcal P_T(\alpha',\mathfrak h)$ with $T(\gamma_{q_+})> T(\gamma)$. This contradicts \eqref{e:periodmax} and proves $\rho_{\mathrm{dia}}(\alpha',\mathfrak h)> \frac{1}{t_\Sigma}$.
\end{proof}
In view of the last result, we only need to prove implications \eqref{e:imply} above to establish Theorem \ref{t:main}. This will be done in the next section.

\section{Surfaces with a symplectic form vanishing at the boundary}\label{sec:generating}
For $a>0$, we recall the notation for the annuli $\mathbb A=[0,a)\times S^1$, $\A'=[0,a/2)\times S^1$, where $S^1=\R/\Z$. As before if $\mathcal M$ is a manifold, we write $\mathring{\mathcal M}$ for the interior of $\mathcal M$.

In this section, $N$ will denote a connected oriented compact surface with one boundary component. We fix a collar neighbourhood of the boundary ${\mathfrak i}_\A:\A\to N$ with positively oriented coordinates $(r,\theta)\in\A$, where $r=0$ corresponds to $\p N$. Hence, we have the identification $S^1\cong \p N$ and the orientation induced by $N$ on $\p N$ is given by the one-form $-\di\theta$.

On $N$, we consider a one-form $\lambda$ such that $\di\lambda$ is a positive symplectic two-form on $\mathring N$ and
\begin{equation*}\label{e:deflambdaA}
\lambda_\A:={\mathfrak i}_\A^*\lambda=\big(-k+\tfrac12 r^2\big)\di\theta,
\end{equation*}
where, by Stokes' Theorem, $k=\int_N\di\lambda>0$. In particular, $\di\lambda_\A=r\di r\wedge\di \theta$ vanishes of order $1$ at $r=0$. The pair $(N,\lambda)$ is an instance of an \emph{ideal Liouville domain}, a notion due to Giroux (see \cite{Gir17}). 
\subsection{A neighbourhood theorem} 
In this subsection, we will develop a version of the Weinstein neighbourhood theorem for the diagonal
\[
\Delta_N\subset \big(N\times N,(-\di\lambda)\oplus\di\lambda\big).
\]
More precisely, we will consider the zero section
\[
\mathcal O_N\subset \big(\ta^*N,\di\lambda_{\mathrm{can}}\big)
\]
in the standard cotangent bundle of $N$ and look for an exact symplectic map $\mathcal W:\mathcal N\to \ta^*N$ from a neighbourhood $\mathcal N$ of $\Delta_N$ in $N\times N$, so that $\WW\circ {\mathfrak i}_{\Delta_N}={\mathfrak i}_{\OO_N}$, where
\[
{\mathfrak i}_{\Delta_N}:N\hookrightarrow N\times N,\quad {\mathfrak i}_{\Delta_N}(q)=(q,q),\qquad\quad {\mathfrak i}_{\OO_N}:N\hookrightarrow \ta^*N,\quad {\mathfrak i}_{\OO_N}(q)=(q,0)
\]
are the canonical inclusions of the diagonal and the zero section after the natural identifications of these sets with $N$. We start by giving an explicit construction of $\mathcal W$ on $\A\times \A$.

Let us endow the product $\A\times\A$ with coordinates $(r,\theta,R,\Theta)$, so that the diagonal is $\Delta_\A:=\{r=R,\ \theta=\Theta\}$. We make the identification $\ta^*\A=\A\times\R^2$ and let $(\rho,\vartheta,p_\rho,p_\vartheta)$ be the corresponding coordinates on $\ta^*\A$. We consider an open neighbourhood $\Y$ of $\Delta_\A$ defined as 
\begin{equation*}\label{def:y}
\Y:=\big\{(r,\theta,R,\Theta)\in\A\times\A\ \big|\ |\theta-\Theta|<\tfrac12\big\}
\end{equation*}
and define the auxiliary sets
\[
\Y':=\Y\cap(\A'\times\A'),\qquad \p \Y:=\Y\cap(\p \A\times\p \A).
\]
We have a well-defined difference function
\[
\Y\to(-\tfrac12,\tfrac12),\qquad (r,\theta,R,\Theta)\mapsto \theta-\Theta.
\]
We consider the map $\WW_\A:\Y\to \ta^*\A$ given in coordinates by
\begin{equation}\label{e:expwa}
\left\{
\begin{aligned}
\rho&=R,\\
\vartheta&=\theta,\\
p_\rho&=R(\theta-\Theta),\\
p_{\vartheta}&=\tfrac12(R^2-r^2),
\end{aligned}
\right.
\end{equation}
so that $\WW_\A\circ {\mathfrak i}_{\Delta_\A}={\mathfrak i}_{\OO_\A}$. The restriction $\WW_{\mathring{\A}}:=\WW_\A|_{\mathring\Y}:\mathring\Y\to \mathcal W_\A(\mathring\Y)$ is a diffeomorphism with inverse given by
\begin{equation}\label{e:expinvwa}
\left\{
\begin{aligned}
r&=\sqrt{\rho^2-2p_\vartheta},\\
\theta&=\vartheta,\\
R&=\rho,\\
\Theta&=\theta-\frac{p_\rho}{\rho}.
\end{aligned}
\right.
\end{equation}
We also consider the restriction $\WW_{\A'}:=\WW_\A|_{\Y'}:\Y'\to\ta^*\A'$. Its image has the following expression, which will be useful later on:
\begin{equation}\label{e:imagewa}
\mathcal W_{\A'}(\Y')=\Big\{(\rho,\vartheta,p_\rho,p_\vartheta)\in\ta^*\A'\ \Big|\ p_\rho\in\big(-\tfrac12\rho,\tfrac12\rho\big),\ p_\vartheta\in\big(\tfrac12\big(\rho^2-\tfrac{a^2}{4}\big),\tfrac12\rho^2\big]\Big\}.
\end{equation}
Finally, let us define the function
\begin{equation}\label{e:kaboundary}
K_\A:\Y\to \R,\qquad K_\A(r,\theta,R,\Theta):=(k-\tfrac12R^2)(\theta-\Theta),
\end{equation}
and set $K_{\A'}:=K_\A|_{\Y'}:\Y'\to\R$. There holds $K_\A|_{\Delta_\A}=0$ and
\begin{equation}\label{eq:psiK_A}
\WW_\A^*\lambda_\mathrm{can}=(-\lambda_\A)\oplus \lambda_\A-\di K_\A.
\end{equation}
Indeed, we have
\begin{align*}
\WW_\A^*\lambda_{\mathrm{can}}+\lambda_\A\oplus(-\lambda_\A)&=R(\theta-\Theta)\di R+\tfrac12(R^2-r^2)\di\theta+(-k+\tfrac12r^2)\di\theta-(-k+\tfrac12R^2)\di\Theta\\
&=(\theta-\Theta)\di(\tfrac12 R^2)+\tfrac12R^2\di(\theta-\Theta)-k\di(\theta-\Theta)\\
&=(\theta-\Theta)\di(-k+\tfrac12 R^2)+(-k+\tfrac12R^2)\di(\theta-\Theta)\\
&=-\di K_\A.
\end{align*}
Since, for all $q\in \A$, $(\lambda_{\mathrm{can}})_{(q,0)}=0$, we also deduce
\begin{equation}\label{e:dka}
\di_{(q,q)}K_\A=\big((-\lambda_\A)\oplus\lambda_\A\big)_{(q,q)}.
\end{equation}
Finally, if $\mathfrak i_{\p \Y}:\p\Y\to\Y$ is the natural inclusion, from \eqref{eq:psiK_A} we conclude that
\begin{equation}\label{e:dkapy}
\mathfrak i_{\p \Y}^*\big((-\lambda_\A)\oplus\lambda_\A\big)=\di\big(K_\A\circ\mathfrak i_{\p \Y}\big)
\end{equation}
Indeed, $(\WW_\A\circ\mathfrak i_{\p\Y})^*\lambda_{\mathrm{can}}=0$ from the explicit formula for $\mathcal W_\A$ given in \eqref{e:expwa} and the fact that both $r$ and $R$ vanish on $\p\Y$.

We can now state the neighbourhood theorem. The proof will be an adaptation of \cite[Theorem 3.33]{MS98} (with different sign convention). 
\begin{prp}\label{prop:neighborhood_map}
There exist an open neighbourhood $\NN\subset N\times N$ of the diagonal $\Delta_N$, a map $\WW:\NN\to\ta^*N$, and a function $K:\NN\to\R$ with the following properties.
\begin{enumerate}[(i)]
\item The set $\mathcal N$ contains $\Y'$. If we write \label{def:mathcalT}$\mathcal T:=\mathcal W(\mathcal N)$, then $\mathring{\mathcal T}\subset\ta^*{\mathring N}$ is an open neighbourhood of $\OO_{\mathring N}$ and the restriction $\WW|_{\mathring{\mathcal N}}:\mathring{\mathcal N}\to \mathring{\mathcal T}$ is a diffeomorphism.\\[-2ex]
\item $\WW^*\lambda_\mathrm{can}=(-\lambda)\oplus \lambda-\di K$.\\[-2ex]
\item $\WW\circ {\mathfrak i}_{\Delta_N}={\mathfrak i}_{\OO_N},\quad \WW|_{\Y'}=\WW_{\A'}$.\\[-2ex]
\item $K\circ {\mathfrak i}_{\Delta_N}=0,\quad K|_{\Y'}=K_{\A'}$.
\item If $\p\Y':=\Y'\cap(\p N\times\p N)$ and $\mathfrak i_{\p\Y'}:\p\Y'\to N\times N$ is the inclusion, then
\[
\di (K\circ\mathfrak i_{\p \Y'})=\mathfrak i_{\p\Y'}^*\big((-\lambda)\oplus\lambda\big).
\]
\end{enumerate}
\end{prp}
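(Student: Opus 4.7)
The plan is to adapt the classical Weinstein--Moser argument (see \cite[Theorem 3.33]{MS98}) to our partially degenerate setting: $(-\di\lambda) \oplus \di\lambda$ degenerates along $\partial N \times \partial N$, and $\mathcal{W}$ must match the explicit $\mathcal{W}_{\A'}$ on $\Y'$. First, using a Weinstein tubular-neighbourhood construction of $\Delta_N$ in $N \times N$ relative to the prescribed collar data, I would build an initial smooth map $\mathcal{W}_0 : \mathcal{N}_0 \to \ta^*N$ on some neighbourhood $\mathcal{N}_0 \supset \Delta_N \cup \Y'$, coinciding with $\mathcal{W}_{\A'}$ on $\Y'$, sending $(q,q) \mapsto (q,0)$, a local diffeomorphism on $\mathring{\mathcal N}_0$, and satisfying the pointwise matching
\begin{equation*}
\mathcal{W}_0^* \di\lambda_{\can}\big|_{(q,q)} = \big((-\di\lambda) \oplus \di\lambda\big)\big|_{(q,q)}, \qquad \forall\, q \in N.
\end{equation*}
This matching is automatic on $\Y'$ by differentiating \eqref{eq:psiK_A}, and can be arranged on $\Delta_N \setminus \Y'$ by choosing an appropriate normal framing. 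Similarly, I extend $K_{\A'}$ to $K_0:\mathcal{N}_0 \to \R$ with $K_0|_{\Delta_N}=0$ and $\di K_0|_{(q,q)} = ((-\lambda) \oplus \lambda)|_{(q,q)}$, which on $\Y'$ is \eqref{e:dka}.

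Next I would run Moser's trick in the interior. Setting $\omega_0 := (-\di\lambda)\oplus\di\lambda$, $\omega_1 := \mathcal{W}_0^* \di\lambda_{\can}$, and
\begin{equation*}
\tilde\eta := (-\lambda) \oplus \lambda - \di K_0 - \mathcal{W}_0^* \lambda_{\can},
\end{equation*}
one has $\di\tilde\eta = \omega_0 - \omega_1$; by \eqref{eq:psiK_A} and the choice of $K_0$, the 1-form $\tilde\eta$ vanishes identically on $\Y'$ and pointwise along $\Delta_N$. After shrinking $\mathcal{N}_0$, the interpolation $\omega_t := (1-t)\omega_0 + t\omega_1$ is symplectic on $\mathring{\mathcal N}_0$, and the vector field $X_t$ on $\mathring{\mathcal N}_0$ determined by $\iota_{X_t}\omega_t = \tilde\eta$ vanishes on $\Delta_{\mathring N}$ and on $\mathring{\Y'}$. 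After a further shrinking, $X_t$ integrates to an isotopy $\phi_t$ of some neighbourhood $\mathcal{N}_1$ of $\Delta_{\mathring N}$ in $\mathring N \times \mathring N$, fixing both $\Delta_{\mathring N}$ and $\mathcal{N}_1 \cap \mathring{\Y'}$, with $\phi_1^*\omega_1 = \omega_0$. I then set $\mathcal{N} := \mathcal{N}_1 \cup \Y'$ and define $\mathcal{W}$ as $\mathcal{W}_0 \circ \phi_1$ on $\mathcal{N}_1$ and $\mathcal{W}_{\A'}$ on $\Y'$; these agree on the overlap $\mathcal{N}_1 \cap \mathring{\Y'}$ because $\phi_1 = \id$ and $\mathcal{W}_0 = \mathcal{W}_{\A'}$ there, so $\mathcal{W}$ is smooth. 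By construction $\mathcal{W}^* \di\lambda_{\can} = \omega_0$, so $\eta := (-\lambda)\oplus\lambda - \mathcal{W}^*\lambda_{\can}$ is closed on $\mathcal{N}$ with $\mathfrak{i}_{\Delta_N}^*\eta = 0$. Since $\mathcal{N}$ deformation-retracts onto $\Delta_N \cong N$, the class $[\eta] \in H^1(\mathcal{N}) \cong H^1(N)$ is represented by $\mathfrak{i}_{\Delta_N}^*\eta = 0$ and hence vanishes, yielding $\eta = \di K$ for some primitive $K$ which one normalizes so that $K|_{\Delta_N} = 0$. On the connected set $\Y'$, $K - K_{\A'}$ is a locally constant primitive of zero vanishing on $\Delta_\A \subset \Y'$, so $K|_{\Y'} = K_{\A'}$, giving (iv); (i)--(iii) are immediate from the construction, and (v) is then a restatement of \eqref{e:dkapy} applied to $K|_{\Y'} = K_{\A'}$.

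The main obstacle is reconciling Moser's trick with the degeneracy of $\omega_0$ at $\partial N \times \partial N$: the equation $\iota_{X_t}\omega_t = \tilde\eta$ cannot be inverted across the boundary. The crucial observation is that the explicit formulas force $\tilde\eta$ to vanish not merely on $\p\Y'$ but on \emph{all} of $\Y'$, so the Moser isotopy restricts to the identity on $\mathring{\Y'}$ and extends trivially to $\p\Y' = \Y' \cap (\partial N \times \partial N)$ by declaring $\mathcal{W} := \mathcal{W}_{\A'}$ there. A secondary technical point is ensuring that the pre-Moser $\mathcal{W}_0$ can be built compatibly with the prescribed data; the pointwise matching on $\Delta_\A$ is automatic from \eqref{eq:psiK_A}, so only the construction on $\Delta_N \setminus \Y'$ requires care, and there one may simply use any symplectic Weinstein tube for $(-\di\lambda)\oplus\di\lambda$ extending $\mathcal{W}_{\A'}$.
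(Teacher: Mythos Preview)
Your proposal is correct and follows essentially the same Weinstein--Moser strategy as the paper: build an initial tubular map agreeing with $\WW_{\A'}$ on $\Y'$ and matching the symplectic forms along $\Delta_N$, then run a Moser isotopy that is trivial on $\Y'$ and on the diagonal. The two minor differences are that the paper makes the construction of your $\WW_0$ explicit by extending the compatible almost complex structure $J_{\mathring\Y}$ from $\mathring\Y'$ to $\mathring N\times\mathring N$ and using the exponential map of the associated metric, and that the paper produces $K$ directly during the Moser step (as $L_1+K_{\mathcal N}\circ\Phi_1$, tracking the primitive $\Lambda_t$) rather than obtaining it afterwards via your deformation-retract cohomological argument; both routes yield the same conclusion.
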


\begin{proof}
Let us denote by $(q,p)$ the points in $\ta^*\A\cong \A\times \R^2$, where $q=(\rho,\vartheta)$ and $p=(p_\rho,p_\vartheta)$. Let $g_\A$ and $g_{\ta^*\A}$ be the standard metrics on $\A$ and $\ta^*\A$:
\[
g_\A:=\di\rho^2+\di\vartheta^2,\qquad g_{\ta^*\A}:=\di\rho^2+\di\vartheta^2+\di p_\rho^2+\di p_\vartheta^2.
\]
Then, the metric $g_{\ta^*\A}$ is compatible with the canonical symplectic form $\di\lambda_{\op{can}}$. Namely,
\begin{equation*}
g_{\ta^*\A}=\di\lambda_{\op{can}}(J_{\ta^*\A}\,\cdot\,,\,\cdot\,),
\end{equation*}
where $J_{\ta^*\A}:\ta(\ta^*\A)\to\ta(\ta^*\A)$ is the standard complex structure given by
\[
J_{\ta^*\A}\p_\rho=\p_{p_\rho},\qquad J_{\ta^*\A}\p_\vartheta=\p_{p_\vartheta},\qquad J_{\ta^*\A}\p_{p_\rho}=-\p_{\rho},\qquad J_{\ta^*\A}\p_{p_\vartheta}=-\p_{\vartheta}.
\]
If $(q,0)\in\OO_\A$, then we have the horizontal and vertical embeddings
\[
\di_q{\mathfrak i}_{\OO_\A}:\ta_q\A\to \ta_{(q,0)}(\ta^*\A),\qquad \ta^*_q\A\to \ta_{(q,0)}(\ta^*\A),\ \ p\mapsto p^*,
\]
so that, if $\sharp:\ta^*_q\A\to \ta_q\A$ is the metric duality given by $g_\A$,  there holds
\[
p^*=J_{\ta^*\A}\cdot\di_q{\mathfrak i}_{\OO_\A}\cdot p^\sharp,\qquad \forall\, p\in\ta^*_q\A.
\] 
We now combine this formula with the fact that, for every $(q,p)\in \ta^*\A$, the ray $t\mapsto (q,tp)$, $t\in[0,1]$ is a geodesic for $g_{\ta^*\A}$ with initial velocity $p^*$. Thus, if $\exp^{\ta^*\A}$ denotes the exponential map of $g_{\ta^*\A}$, we arrive at
\begin{equation}\label{e:exploc}
(q,p)=\exp^{\ta^*\A}_{{\mathfrak i}_{\OO_{\A}}(q)}\Big(J_{\ta^*\A}\cdot \di_q {\mathfrak i}_{\OO_{\A}}\cdot p^\sharp\Big).
\end{equation}
We consider the pulled back objects $g_{\mathring \Y}:=\WW_{\mathring\A}^*g_{\ta^*\A}$ and $J_{\mathring\Y}:=\WW_{\mathring\A}^* J_{\ta^*\A}$. In particular, $\WW_{\mathring\A}$ is a local isometry between $g_{\mathring \Y}$ and $g_{\ta^*\A}$. Moreover, since ${\mathcal W}_{\mathring{\A}}^*(\di\lambda_{\op{can}})=((-\di\lambda_\A)\oplus\di\lambda_\A)$ by \eqref{eq:psiK_A}, we see that the structure $J_{\mathring \Y}$ is compatible with $(-\di\lambda)\oplus\di\lambda$, since $J_{\ta^*\A}$ is compatible with $\di\lambda_\mathrm{can}$ and $\lambda_\A=\mathfrak{i}_\A^*\lambda$. Namely,
\[
\big((-\di\lambda)\oplus\di\lambda\big)\big|_{\mathring \Y}=g_{\mathring\Y}(J_{\mathring\Y}\,\cdot\,,\,\cdot\,).
\]
Furthermore, using \eqref{e:exploc}, we can compute the pre-image of a point $(q,p)\in \mathcal W_{\A}(\mathring\Y)$ as
\begin{equation}\label{e:winverselocal}
\begin{aligned}
\WW^{-1}_{\mathring\A}(q,p)&=\WW^{-1}_{\mathring\A}\Big(\exp^{\ta^*\A}_{{\mathfrak i}_{\OO_{\A}}(q)}\big(J_{\ta^*\A}\cdot \di_q {\mathfrak i}_{\OO_{\A}}\cdot p^\sharp\big)\Big)\\
&=\exp^{\mathring\Y}_{{\mathfrak i}_{\Delta_\A}(q)}\Big(\di_{{\mathfrak i}_{\OO_\A}(q)}\WW^{-1}_{\mathring\A}\cdot J_{\ta^*\A} \cdot \di_q {\mathfrak i}_{\OO_\A}\cdot p^\sharp\Big)\\
&=\exp^{\mathring\Y}_{{\mathfrak i}_{\Delta_\A}(q)}\Big(J_{\mathring\Y}\cdot \di_{{\mathfrak i}_{\OO_\A}(q)}\WW^{-1}_{\mathring\A}\cdot \di_q {\mathfrak i}_{\OO_\A}\cdot p^\sharp)\Big)\\
&=\exp^{\mathring\Y}_{{\mathfrak i}_{\Delta_\A}(q)}\big(J_{\mathring\Y}\cdot \di_q {\mathfrak i}_{\Delta_\A}\cdot p^\sharp\big).
\end{aligned}
\end{equation}
The space of almost complex structures, which are compatible with the symplectic form $((-\di\lambda)\oplus \di\lambda)|_{\mathring N\times\mathring N}$, is contractible. Therefore, we can find an almost complex structure $J$ on $\mathring N\x \mathring N$, which is compatible with $((-\di\lambda)\oplus \di\lambda)|_{\mathring N\times\mathring N}$ and such that
\begin{equation}\label{e:Jmathringy}
J\big|_{\mathring\Y'}=J_{\mathring\Y}\big|_{\mathring\Y'}.
\end{equation}
We denote by $g$ the corresponding metric on $\mathring N\x \mathring N$, which satisfies
\begin{equation}\label{e:gmathringy}
g|_{\mathring\Y'}=g_{\mathring\Y}|_{\mathring\Y'}.
\end{equation}
We write $g_N:={\mathfrak i}_{\Delta_N}^*g$ for the restricted metric on $N$. We observe that $g_N|_{\mathring{\A}'}=g_\A|_{\mathring{\A}'}$, and therefore, we denote the metric duality given by $g_N$ also by $\sharp:\ta^*N\to\ta N$. Let us consider the set $\mathcal T_{1}$ made by all the points $(q,p)\in\ta^*\mathring N$ with the property that the $g$-geodesic starting at time $0$ from ${\mathfrak i}_{\Delta_N(q)}$ with direction $J\cdot\di_q{\mathfrak i}_{\Delta_N}\cdot p^\sharp$ is defined up to time $1$. We claim that
\begin{equation}\label{e:mathcalT}
\text{$\mathcal T_{1}$ is a fibre-wise star-shaped neighbourhood of $\OO_{\mathring N}$ and it contains $\mathcal W_{\A}(\mathring\Y')$}.
\end{equation}
The second assertion follows from equations \eqref{e:winverselocal} and \eqref{e:Jmathringy}, \eqref{e:gmathringy}. For the first one, we see from the homogeneity of the geodesic equation that $\mathcal T_{1}$ contains $\OO_{\mathring N}$, and it is fibre-wise star-shaped around $\OO_{\mathring N}$. Finally, since $\mathcal T_{1}\setminus \mathcal W_{\A}(\mathring\Y')$ is bounded away from $\p(\ta^*N)$, the set $\mathcal T_{1}$ is a neighbourhood of $\OO_{\mathring N}$. We define the map
\[
\Upsilon:\mathcal T_{1}\to N\times N,\qquad
\Upsilon(q,p):=\exp^{g}_{{\mathfrak i}_{\Delta_N}(q)}\Big(J\cdot \di_q {\mathfrak i}_{\Delta_N}\cdot p^\sharp\Big).
\]
It satisfies
\begin{equation}\label{e:upsilonrel}
\Upsilon|_{\mathcal W_{\A}(\mathring\Y')}=\WW_{\mathring \A'}^{-1},\qquad\Upsilon\circ  {\mathfrak i}_{\OO_N}={\mathfrak i}_{\Delta_N}.
\end{equation}
If $q\in\mathring N$, the differential of $\Upsilon$ at ${\mathfrak i}_{\OO_N}(q)$ in the direction $u=p^*+\di_{q}{\mathfrak i}_{\OO_N}\cdot v\in\ta_{{\mathfrak i}_{\OO_N}(q)}\ta^*N$, where $p\in\ta^*_qN$ and $v\in \ta_qN$, is given by
\[
\di_{{\mathfrak i}_{\OO_N}(q)}\Upsilon\cdot u= J\cdot \di_q{\mathfrak i}_{\Delta_N}\cdot p^\sharp+\di_q{\mathfrak i}_{\Delta_N}\cdot v.
\]
If we abbreviate $\Omega=(-\di\lambda)\oplus\di\lambda$, we claim that $(\Upsilon^*\Omega)_{{\mathfrak i}_{\OO_N}(q)}=(\di\lambda_\mathrm{can})_{{\mathfrak i}_{\OO_N}(q)}$, for all $q\in\mathring N$. For $u_1,u_2\in \ta_{{\mathfrak i}_{\OO_N}(q)}\ta^*N$, we compute
\begin{equation}\label{eq:equality_on_zero_section}
\begin{aligned}
\Upsilon^*\Omega(u_1,u_2)&=\Omega\big(J\cdot \di_q{\mathfrak i}_{\Delta_N}\cdot p_1^\sharp+\di_q{\mathfrak i}_{\Delta_N}\cdot v_1,J\cdot \di_q{\mathfrak i}_{\Delta_N}\cdot p_2^\sharp+\di_q{\mathfrak i}_{\Delta_N}\cdot v_2\big)\\
&=\Omega\big(J\cdot \di_q{\mathfrak i}_{\Delta_N}\cdot p_1^\sharp,\di_q{\mathfrak i}_{\Delta_N}\cdot v_2\big)-\Omega\big(J\cdot \di_q{\mathfrak i}_{\Delta_N}p_2^\sharp,\di_q{\mathfrak i}_{\Delta_N}\cdot v_1\big)\\
&=g\big(\di_q{\mathfrak i}_{\Delta_N}\cdot p_1^\sharp,\di_q{\mathfrak i}_{\Delta_N}\cdot v_2\big)-g\big(\di_q{\mathfrak i}_{\Delta_N}\cdot p_2^\sharp,\di_q{\mathfrak i}_{\Delta_N}\cdot v_1\big)\\
&=({\mathfrak i}_{\Delta_N}^*g)\big(p_1^\sharp, v_2\big)-({\mathfrak i}_{\Delta_N}^*g)\big(p_2^\sharp,v_1\big)\\
&=g_N\big(p_1^\sharp, v_2\big)-g_N\big(p_2^\sharp, v_1\big)\\
&=p_1(v_2)-p_2(v_1)\\
&=\di\lambda_\mathrm{can}(u_1,u_2),
\end{aligned}
\end{equation}
where in the second equality we used the fact that $\Delta_N$ is Lagrangian and that $J$ is a symplectic endomorphism. 

We move now the first steps in constructing the function $K:\mathcal N\to\R$. We abbreviate $\Lambda:=(\Upsilon^{-1})^*((-\lambda)\oplus\lambda)$. This is a one-form on $\mathcal T_{1}\subset \ta^*\mathring{N}$ and satisfies
\[
{\mathfrak i}_{\OO_N}^*\Lambda={\mathfrak i}_{\Delta_N}^*((-\lambda)\oplus\lambda)=-\lambda+\lambda=0.
\]
We consider any $K_{1}:\mathcal T_{1}\to N$ satisfying, for all $q\in\mathring N$,
\begin{equation}\label{eq:K_N}
K_{1}\circ {\mathfrak i}_{\OO_N}(q)=0,\qquad \di_{{\mathfrak i}_{\OO_N}(q)} K_{1}=\Lambda_{{\mathfrak i}_{\OO_N}(q)}.
\end{equation}
For example, we can set
\[
K_{1}(q,p):=\int_0^1\Lambda_{(q,tp)}(p^*)\di t.
\]
The first property in \eqref{eq:K_N} is immediate and it implies that
\[
\di_{{\mathfrak i}_{\OO_{N}}(q)}K_1\cdot\di_{q}{\mathfrak i}_{\OO_{N}}\cdot v=0=\Lambda_{{\mathfrak i}_{\OO_N}(q)}\big(\di_{q}{\mathfrak i}_{\OO_{N}}\cdot v\big).
\]
Thus, we just need to check the second property on vertical tangent vectors $p^*\in\ta_{(q,0)}(\ta^*N)$:
\begin{align*}
\di_{{\mathfrak i}_{\OO_N}(q)}K_1\cdot p^*=\lim_{s\to 0}\frac{K_1(q,sp)-K_1(q,0)}{s}=\lim_{s\to 0}\frac{1}{s}\int_0^1\Lambda_{(q,tsp)}(sp^*)\di t&=\lim_{s\to 0}\int_0^1\Lambda_{(q,tsp)}(p^*)\di t\\
&=\Lambda_{{\mathfrak i}_{\OO_N}(q)}(p^*).
\end{align*}

At this point, we transfer the attention on $N\times N$. First, we can shrink $\mathcal T_{1}$ in such a way that \eqref{e:mathcalT} still holds and that $\Upsilon$ is a diffeomorphism onto its image $\Upsilon(\mathcal T_{1})$. We define the open neighbourhood $\NN$ of $\Delta_N$ by
\[
\NN:=\Upsilon(\mathcal T_{1})\cup \Y'
\]
and the map $\WW_1:\NN\to\ta^*N$ obtained by gluing:
\begin{equation}\label{e:extensionw}
\WW_1|_{\Upsilon(\mathcal T_{1})}=\Upsilon^{-1},\qquad \WW_1|_{\Y'}=\WW_{\A'}.
\end{equation}
Such a map is well-defined because of \eqref{e:upsilonrel} and satisfies $\WW_1\circ {\mathfrak i}_{\Delta_N}={\mathfrak i}_{\OO_N}$. Let $\chi: \NN\to[0,1]$ be a cut-off function which is equal to $0$ in a neighbourhood of $\Y'$ and equal to $1$ on $\NN\setminus\Y$. We set
\[
K_\NN:\NN\to\R,\qquad K_{\mathcal N}:=\chi\cdot (K_1\circ \WW_1)+(1-\chi)\cdot K_\A.
\]
We readily see that
\begin{equation}\label{e:kappan}
K_\NN\circ {\mathfrak i}_{\Delta_N}=0,\qquad K_\NN|_{\Y'}=K_\A |_{\Y'}.
\end{equation}
Furthermore, for all $q\in \mathring N$, there holds
\begin{align*}
\di_{{\mathfrak i}_{\Delta_N}(q)} K_\NN&=\chi\cdot \WW_1^*(\di_{{\mathfrak i}_{\OO_N}(q)}K_{{1}})+(1-\chi)\cdot\di_{{\mathfrak i}_{\Delta_N}(q)} K_\A\\
&=\chi\cdot\WW_1^*(\Lambda_{{\mathfrak i}_{\OO_N}(q)})+(1-\chi)\cdot\big((-\lambda)\oplus\lambda\big)_{{\mathfrak i}_{\Delta_N}(q)}\\
&=\chi\cdot\big((-\lambda)\oplus\lambda\big)_{{\mathfrak i}_{\Delta_N}(q)}+(1-\chi)\cdot\big((-\lambda)\oplus\lambda\big)_{{\mathfrak i}_{\Delta_N}(q)},\\
&=\big((-\lambda)\oplus\lambda\big)_{{\mathfrak i}_{\Delta_N}(q)},
\end{align*}
where we used $K_1\circ \WW_1\circ\mathfrak i_{\Delta_N}(q)=0=K_\A\circ\mathfrak i_{\Delta_N}(q)$ in the first equality. while
the second equality followed from \eqref{e:dka} and \eqref{eq:K_N}. Since $(\lambda_{\mathrm{can}})_{{\mathfrak i}_{\OO_N}(q)}=0$, we deduce
\begin{equation}\label{e:exactat0}
(\WW_1^*\lambda_{\mathrm{can}})_{{\mathfrak i}_{\Delta_N}(q)}=\big((-\lambda)\oplus\lambda\big)_{{\mathfrak i}_{\Delta_N}(q)}-\di_{{\mathfrak i}_{\Delta_N}(q)}K_\NN.
\end{equation}
The rest of the proof follows Moser's argument. We set 
\[
\Lambda_t:=t\big(\WW_1^*\lambda_\mathrm{can}+\di K_\NN\big)+(1-t)\big((-\lambda)\oplus\lambda\big),\quad t\in[0,1].
\] 
By \eqref{eq:psiK_A}, \eqref{e:extensionw}, and \eqref{e:kappan}, we have
\begin{equation}\label{e:lambday}
\Lambda_t= (-\lambda)\oplus\lambda\ \ \text{on}\  \Y'.
\end{equation}
Moreover, for all $q\in \mathring N$, by \eqref{eq:equality_on_zero_section} and \eqref{e:exactat0}, we have
\begin{equation}\label{e:atzero}
(\di\Lambda_t)_{{\mathfrak i}_{\Delta_N}(q)}=\big((-\di\lambda)\oplus\di\lambda\big)_{{\mathfrak i}_{\Delta_N}(q)},\qquad (\Lambda_t)_{{\mathfrak i}_{\Delta_N}(q)}=\big((-\lambda)\oplus\lambda\big)_{{\mathfrak i}_{\Delta_N}(q)}.
\end{equation}
In particular $\di\Lambda_t$ is non-degenerate on $\Delta_{\mathring{N}}$. Therefore, up to shrinking the neighbourhood away from $\Y'$, we can assume that $\di\Lambda_t$ is non-degenerate on $\mathring\NN$. Let $X_t$ be a time-dependent vector field and $L_t$ a time-dependent function on $\mathring\NN$ defined by 
\[
\iota_{X_t}\di\Lambda_t=-\frac{\di \Lambda_t}{\di t},\qquad L_t:=-\int_0^t\Lambda_{t'}(X_{t'})\circ \Phi_{t'}\,\di t',
\]
where $\Phi_{t}$ is the flow of $X_t$. By \eqref{e:lambday}, we see that $X_t$ and $L_t$ vanish on $\mathring\Y'$ and we can extend them trivially to the whole $\NN$. Relations \eqref{e:atzero} imply that $X_t$ and $L_t$ vanish on $\Delta_{\mathring N}$. In particular, $\Phi_t$ is the identity map on $\Delta_{N}$, and up to shrinking the neighbourhood $\NN$ away from $\Y'$, we can suppose that $\Phi_t$ is defined up to time $1$. For all $t\in[0,1]$ we have
\[
\frac{\di}{\di t}\Big(\Phi_t^*\Lambda_t+\di L_t\Big)=\Phi_t^*\Big(\iota_{X_t}\di\Lambda_t+\di\big(\Lambda_t(X_t)\big)+\frac{\di\Lambda_t}{\di t}\Big)+\di\Big(\frac{\di L_t}{\di t}\Big)=0.
\]
Together with $\Phi_0^*\Lambda_0+\di L_0=\Lambda_0$, this implies $\Phi_1^*\Lambda_1+\di L_1=\Lambda_0$. Hence,
\[
\Phi_1^*\WW_1^*\lambda_\mathrm{can}=(-\lambda)\oplus\lambda-\di\big(L_1+K_{\mathcal N}\circ\Phi_1\big),
\]
and properties (i) and (ii) in the statement follow with  
\[
\WW:= \WW_1\circ \Phi_1,\qquad K:=L_1+K_{\mathcal N}\circ \Phi_1.
\]
Properties (iii) and (iv) hold as well, since they are satisfied by $\WW_1$ and $K_{\mathcal N}$ and we have shown that $\Phi_1|_{\Delta_N}=\id$, $\Phi_1|_{\Y'}=\id$ and $L_1|_{\Delta_N}=0$, $L_1|_{\Y'}=0$. Property (v) follows from (iv) and equation \eqref{e:dkapy}.
\end{proof}

\subsection{Exact diffeomorphisms $C^1$-close to the identity}

Let $\mathbb E$ denote the set of all exact diffeomorphisms $\varphi:N\to N$, namely $\varphi^*\lambda-\lambda$ is an exact one-form. We endow from now on $\mathbb E$ with the uniform $C^1$-topology, whose associated distance function we denote by $\dist_{C^1}$. For  $\epsilon>0$, we consider the open ball around $\id_N$ of radius $\epsilon$ 
\[
\mathbb E(\epsilon):=\big\{\varphi\in\mathbb E\ \big|\ \dist_{C^1}(\varphi,\id_N)<\epsilon\big\}.
\]

If $\varphi\in\mathbb E$, we write \label{def:gammaphi}$\Gamma_\varphi:N\to N\times N$ for its graph $\Gamma_\varphi(q)=(q,\varphi(q))$, and we have $\Gamma_\varphi(\p N)\subset \p N\times\p N$. There is \label{def:epsilon*}$\epsilon_*>0$ such that all $\varphi\in\mathbb E(\epsilon_*)$ enjoy the following properties:
\begin{enumerate}[(a)]
\item $\Gamma_\varphi(N)\subset\NN$.
\item If $\pi_N:\ta^*N\to N$ is the foot-point projection, then the map 
\begin{equation*}\label{e:defnu}
\nu_\varphi:N\to N,\qquad \nu_\varphi:=\pi_N\circ \WW\circ \Gamma_{\varphi}
\end{equation*}
is a diffeomorphism. Indeed, $\nu_\varphi$ is $C^1$-close to $\id_N$, if the same is true for $\varphi$. Henceforth, we write $\nu$ instead of $\nu_\varphi$ when the map $\varphi$ is clear from the context.
\item We have the inclusions
\[
\varphi\big(\A''\big)\subset\A',\qquad \nu^{-1}(\A'')\subset\A',
\]
where $\A'':=[0,a/4)\times S^1$.
\end{enumerate}

If $\varphi\in\mathbb E(\epsilon_*)$, then we can write its restriction to $\A''$ as
\begin{equation*}
\varphi(r,\theta)=\big(R_\varphi(r,\theta),\Theta_\varphi(r,\theta)\big).
\end{equation*}
By \eqref{e:expwa}, the restriction of $\nu$ to $\A''$ reads
\begin{equation}\label{e:nurtheta}
\nu(r,\theta)=\big(R_\varphi(r,\theta),\theta\big),
\end{equation}
which implies that
\begin{equation*}
\nu_\varphi|_{\p N}=\id_{\p N}.
\end{equation*}
Let $\mathfrak i_{\p N}:\p N\to N$ be the inclusion and observe that $\Gamma_\varphi\circ\mathfrak i_{\p N}$ takes values in $\p \Y'$. Therefore, taking the pull-back by $\Gamma_\varphi\circ\mathfrak i_{\p N}$ in Proposition \ref{prop:neighborhood_map}.(v), we get
\[
\di\big(K\circ\Gamma_\varphi\circ\mathfrak i_{\p N}\big)=\mathfrak i_{\p N}^*\big(\varphi^*\lambda-\lambda\big).
\]
With this relation we can single out a special primitive of $\varphi^*\lambda-\lambda$ called the action of $\varphi\in\mathbb E(\epsilon_*)$. It is the unique $C^1$-function $\sigma:N\to \R$ such that
\begin{equation}\label{e:normalization_sigma}
(i)\ \ \varphi^*\lambda-\lambda=\di \sigma,\qquad (ii)\ \ \sigma|_{\p N}=K\circ \Gamma_\varphi|_{\p N}.
\end{equation}

\begin{rmk}
We observe that the normalisation of $\sigma$ at the boundary coincides with the one considered in \eqref{e:lambdaxy} and Theorem \ref{t:final3dim}. Indeed, we have the explicit formulas $\lambda=-k\di\theta$ on $\p N$ and $K(0,\theta,0,\Theta)=k(\theta-\Theta)$ on $\p \Y'$, and for all $\theta_0\in S^1\cong \p N$, there holds
\[
K\circ \Gamma_\varphi(\theta_0)=-k\big(\Theta_\varphi(\theta_0)-\theta_0\big)=-k\int_{\theta_0}^{\varphi(\theta_0)}\di\theta=\int_{\theta_0}^{\varphi(\theta_0)}\lambda.
\]
\end{rmk}
We describe the tangent space of $\mathbb E(\epsilon_*)$. To this purpose we introduce a space of functions.
\begin{dfn}\label{d:normc2+}
We write $\mathbb V$ for the vector space of all smooth functions $f:N\to\R$ such that both $f$ and $\di f$ vanish at $\p N$. We endow this space with the pre-Banach norm $\Vert\cdot\Vert_{\mathbb V}$ defined by
\begin{equation*}
\Vert f\Vert_{\mathbb V}:=\Vert f\Vert_{C^2}+\Vert \tfrac1r\di f|_{\A}\Vert_{C^1},\qquad \forall\, f\in\mathbb V.
\end{equation*}
Choosing the restriction to a smaller annulus in the second term above yields an equivalent norm on $\mathbb V$. For all $\delta>0$, we denote by $\mathbb V(\delta)$ the open ball of radius $\delta$ in $(\mathbb V,\Vert\cdot\Vert_{\mathbb V})$. 
\end{dfn}
Let $\varphi$ denote some element in $\mathbb E(\epsilon_*)$ with action $\sigma$. First, we take any differentiable path $t\mapsto \varphi_t$ with values in $\mathbb E(\epsilon_* )$ such that $\varphi=\varphi_0$, and write $t\mapsto \sigma_t$ for the corresponding path of actions with $\sigma=\sigma_0$. Let $X_t$ be the $C^1$-vector field on $N$ uniquely defined by
\begin{equation}\label{e:varphiX}
\frac{\di\varphi_t}{\di t}=X_t\circ\varphi_t.
\end{equation}
The associated Hamiltonian function is defined by
\begin{equation}\label{e:defhamiltonian2}
H_t:N\to \R,\qquad H_t:= \frac{\di\sigma_t}{\di t}\circ\varphi_t^{-1}-\lambda(X_t).
\end{equation}
Differentiating $\varphi_t^*\lambda=\lambda+\di\sigma_t$ with respect to $t$, we get
\begin{equation*}
\iota_{X_t}\di\lambda=\di H_t.
\end{equation*}
From this last equation and the fact that $\di\lambda=R\di R\wedge \di \Theta$ vanishes at $\p N$, we see that $\di H_t$ vanishes at $\p N$. Hence, if we write $X_t=X_t^R\p_R+X_t^\Theta\p_\Theta$ on the annulus $\A$, we find
\begin{equation}\label{e:defhamiltonianbound}
X_t^R=\tfrac1R \p_\Theta H_t,\qquad X_t^\Theta=-\tfrac1R\p_R H_t.
\end{equation}
We also observe that $H_t=0$ at the boundary $\p N$ since $\frac{\di\sigma_t}{\di t}=\lambda(X_t)\circ\varphi_t$ there. Indeed, from \eqref{e:normalization_sigma} and Proposition \ref{prop:neighborhood_map}.(v), we compute at $\p N$
\[
\frac{\di\sigma_t}{\di t}=\di_{\Gamma_{\varphi_t}}K\cdot(0\oplus X_t)=\big((-\lambda)\oplus\lambda\big)(0\oplus X_t)\big|_{\Gamma_{\varphi_t}}=\lambda(X_t)\circ\varphi_t.
\]
Therefore, we see that $H_t$ belongs to $\mathbb V$ and $\Vert H_t\Vert_{\mathbb V}$ is equivalent to $\Vert X_t\Vert_{C^1}$.

Conversely, let $H\in\mathbb V$ and take any path $t\mapsto H_t$ with values in $\mathbb V$ and such that $H_0=H$. We claim that there is a uniquely defined path $t\mapsto X_t$ of $C^1$-vector fields with $\iota_{X_t}\di\lambda=\di H_t$. The vector fields are well defined away from $\p N$, since $\di\lambda$ is symplectic there. On $\A$, instead, they are well defined because of \eqref{e:defhamiltonianbound}. Let $t\mapsto \varphi_t$ be the path of diffeomorphisms obtained integrating $X_t$ with the condition $\varphi_0=\varphi$. Differentiating with respect to $t$, we get 
\[
\frac{\di}{\di t}\big(\varphi_t^*\lambda\big)=\varphi_t^*\big(\iota_{X_t}\di\lambda+\di(\lambda(X_t))\big)=\di\big((H_t+\lambda(X_t))\circ\varphi_t\big)
\]
so that all the maps $\varphi_t$ are exact with some action $\sigma_t$. Relation \eqref{e:defhamiltonian2} is also satisfied since $H_t$ and $\frac{\di\sigma_t}{\di t}\circ\varphi_t^{-1}-\lambda(X_t)$ have the same differential and both vanish at $\p N$. We sum up the previous discussion in a lemma.
\begin{lem}\label{l:tev}
There is an isomorphism between the pre-Banach spaces 
\[
\big(\ta_\varphi \mathbb E(\epsilon_*),\Vert \cdot\Vert_{C^1}\big)\longrightarrow(\mathbb V,\Vert\cdot\Vert_{\mathbb V})
\] 
given by the map $X_0\mapsto H_0$, where $X_0$ and $H_0$ are defined in \eqref{e:varphiX} and \eqref{e:defhamiltonian2}.\qed
\end{lem}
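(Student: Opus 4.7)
The preceding discussion already exhibits both directions of the correspondence, so my plan is to organise those constructions into a proof of a linear bijection with a norm bound in each direction.

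For the forward direction, given $X_0\in \ta_\varphi\mathbb E(\epsilon_*)$ I would pick any smooth path $t\mapsto \varphi_t\in\mathbb E(\epsilon_*)$ realising $X_0$, with associated actions $\sigma_t$ normalised by \eqref{e:normalization_sigma}, and define $H_0$ via \eqref{e:defhamiltonian2}. The identity $\iota_{X_0}\di\lambda=\di H_0$ holds by Moser-type differentiation of $\varphi_t^*\lambda-\lambda=\di\sigma_t$. Since $\di\lambda=R\,\di R\wedge \di\Theta$ on $\mathbb A$ vanishes at $\p N$, the same is true of $\di H_0$, so $\di H_0|_{\p N}=0$; together with the boundary identity $\tfrac{\di\sigma_t}{\di t}=\lambda(X_t)\circ\varphi_t$ on $\p N$ (derived in the preceding paragraph from Proposition \ref{prop:neighborhood_map}(v)), this gives $H_0|_{\p N}=0$. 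Hence $H_0\in\mathbb V$. The norm bound $\|H_0\|_{\mathbb V}\lesssim \|X_0\|_{C^1}$ follows because on the collar $\mathbb A$ the formulae \eqref{e:defhamiltonianbound} read $\tfrac{1}{R}\p_\Theta H_0=X_0^R$ and $-\tfrac{1}{R}\p_R H_0=X_0^\Theta$, whereas on the compact region $N\setminus\A''$ the form $\di\lambda$ is uniformly non-degenerate.

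For the inverse, given $H_0\in\mathbb V$ I would solve $\iota_{X_0}\di\lambda=\di H_0$ pointwise. On $\mathring N\setminus\A''$ non-degeneracy of $\di\lambda$ furnishes a unique $X_0$ of the same regularity as $\di H_0$; on $\A$, formulae \eqref{e:defhamiltonianbound} produce a unique candidate, well-defined and $C^1$ up to $\p N$ precisely because the norm $\|\cdot\|_{\mathbb V}$ controls $\tfrac1R\di H_0$ in $C^1(\A)$. The two definitions agree on the overlap $\mathring{\A}\setminus\A''$. Then, as already carried out in the text, integrating an extension $t\mapsto X_t$ (coming from any path $t\mapsto H_t$ through $H_0$) yields a path $\varphi_t$ of exact diffeomorphisms through $\varphi$ whose tangent at $t=0$ is $X_0$, so $X_0\in\ta_\varphi\mathbb E(\epsilon_*)$. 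The converse bound $\|X_0\|_{C^1}\lesssim\|H_0\|_{\mathbb V}$ is immediate from \eqref{e:defhamiltonianbound} on $\mathbb A$ and from the uniform invertibility of $\di\lambda$ on $N\setminus\A''$.

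The two assignments are visibly inverse to each other: the map $X_0\mapsto H_0$ is uniquely characterised (up to an additive constant, which is pinned down by $H_0|_{\p N}=0$) by $\iota_{X_0}\di\lambda=\di H_0$, and this is precisely the equation the inverse map solves. Linearity of both assignments is clear from the defining formulae, and the two norm bounds established above yield the equivalence of pre-Banach norms.

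The only subtle point, and the one I would treat most carefully, is the boundary regularity in the inverse direction: $\di\lambda$ is degenerate along $\p N$, so one cannot define $X_0$ there by naive inversion of $\di\lambda$. The definition of the norm $\Vert\cdot\Vert_\V$ in Definition \ref{d:normc2+} is tailored exactly to this issue, since controlling $\tfrac1r\di H$ in $C^1(\A)$ is what makes the ratios $\tfrac1R\p_\Theta H$ and $\tfrac1R\p_R H$ continuously extend to $\p N$ and depend continuously on $H\in\mathbb V$. This is the reason for working with $\mathbb V$ rather than the more naive space of $C^2$-functions vanishing to first order on $\p N$.
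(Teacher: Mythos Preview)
Your proposal is correct and follows essentially the same approach as the paper: indeed, the paper gives no separate proof of this lemma (it is stated with \qed), and the argument is precisely the discussion immediately preceding it, which you have faithfully reorganised into a formal proof. Your added emphasis on why the $\Vert\cdot\Vert_{\mathbb V}$-norm is needed for boundary regularity of $X_0$ makes explicit what the paper leaves implicit in the single sentence ``$\Vert H_t\Vert_{\mathbb V}$ is equivalent to $\Vert X_t\Vert_{C^1}$''.
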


\subsection{Generating functions}
In this subsection, we describe how to build the correspondence between $C^1$-small exact diffeomorphisms and generating functions in our setting. For a classical treatment, we refer the reader to \cite[Chapter 9]{MS98}. Let $\varphi$ be an exact diffeomorphism in $\mathbb E(\epsilon_*)$. There exists a one-form $\eta:N\to \ta^*N$ such that 
\[
\WW\circ\Gamma_\varphi=\eta\circ\nu.
\]
Since $\lambda_{\mathrm{can}}$ has the tautological property $\eta^*\lambda_{\mathrm{can}}=\eta$, we have
\begin{equation}\label{e:nueta}
\begin{aligned}
\nu^*\eta=\nu^*\eta^*\lambda_{\mathrm{can}}=\Gamma_\varphi^*\WW^*\lambda_{\mathrm{can}}=\Gamma_\varphi^*\big((-\lambda)\oplus\lambda-\di K\big)&=\varphi^*\lambda-\lambda-\di(K\circ\Gamma_\varphi)\\
&=\di(\sigma-K\circ\Gamma_\varphi).
\end{aligned}
\end{equation}
If we denote the generating function of $\varphi\in\mathbb E(\epsilon_*)$ by
\begin{equation}\label{e:G}
G_\varphi:N\rightarrow \R,\qquad G_\varphi:=(\sigma-K\circ\Gamma_\varphi)\circ\nu_\varphi^{-1},
\end{equation}
we have the equality
\begin{equation}\label{eq:dG}
\WW\circ\Gamma_\varphi=\di G_\varphi\circ\nu.
\end{equation}
Henceforth, we will simply write $G$ instead of $G_\varphi$ when the map $\varphi$ is clear from the context. 

We write the restriction of $\nu^{-1}$ to $\A''$ as $\nu^{-1}(\rho,\vartheta)=\big(r_\varphi(\rho,\vartheta),\vartheta\big)$, so that, for every $\theta=\vartheta$, the functions $R_\varphi(\cdot,\theta)$ and $r_\varphi(\cdot,\vartheta)$ are inverse of each other. Moreover, since $r_\varphi(0,\vartheta)=0$, by Taylor's theorem with integral remainder, there exists a function $s_\varphi:\A''\to \R$ such that
\begin{equation*}
r_\varphi=\rho (1+s_\varphi).
\end{equation*}
By \eqref{e:nurtheta}, \eqref{eq:dG} and \eqref{e:expwa}, we have
\begin{equation}\label{e:gatbound}
\left\{\begin{aligned}
\p_\rho G(\rho,\vartheta)&=\rho\big(\vartheta-\Theta_\varphi(r_\varphi(\rho,\vartheta),\vartheta)\big),\\[1ex] 
\p_\vartheta G(\rho,\vartheta)&=\tfrac12\big(\rho^2-r_\varphi^2(\rho,\vartheta)\big)=-\rho^2\big(\tfrac12s_\varphi^2(\rho,\vartheta)+s_\varphi(\rho,\vartheta)\big).
\end{aligned}\right.
\end{equation}

\begin{prp}\label{p:fixg}
If $G:N\to \R$ is the generating function of $\varphi\in\mathbb E(\epsilon_*)$, there holds
\begin{align*}
\mathring N\cap\Fix(\varphi)=\mathring N\cap\Crit G.
\end{align*}
Moreover, if $z\in \mathring N\cap\Fix(\varphi)$, then $\nu(z)=z$ and $\sigma(z)=G(z)$.
\end{prp}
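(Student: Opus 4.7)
The whole proof is packaged in the defining identity \eqref{eq:dG}, namely
\[
\WW\circ\Gamma_\varphi=\di G\circ\nu,
\]
combined with the three facts from Proposition \ref{prop:neighborhood_map}: $\WW\circ {\mathfrak i}_{\Delta_N}={\mathfrak i}_{\OO_N}$ (iii), $K\circ {\mathfrak i}_{\Delta_N}=0$ (iv), and $\WW|_{\mathring{\NN}}$ is a diffeomorphism onto $\mathring{\mathcal T}$ (i). The first direction $\Fix(\varphi)\cap\mathring N\subset\Crit G\cap\mathring N$ will be essentially immediate: if $z\in\mathring N$ with $\varphi(z)=z$, then $\Gamma_\varphi(z)={\mathfrak i}_{\Delta_N}(z)$, so applying $\WW$ gives $\di G(\nu(z))={\mathfrak i}_{\OO_N}(z)=(z,0)$. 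Projecting to the base via $\pi_N$ yields $\nu(z)=z$, after which the fibre component of the same equation forces $(\di G)_z=0$.

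For the reverse inclusion, let $q\in\mathring N$ with $(\di G)_q=0$ and set $p:=\nu^{-1}(q)$. To invoke injectivity of $\WW$ I need both $\Gamma_\varphi(p)$ and ${\mathfrak i}_{\Delta_N}(q)$ to lie in $\mathring\NN$; the latter holds since $q\in\mathring N$, and for the former I will use that $\nu$ restricts to a diffeomorphism of $\mathring N$ (which follows from $\nu|_{\p N}=\id_{\p N}$ together with the fact that $\nu$ is a global diffeomorphism of $N$), so $p\in\mathring N$, and that any diffeomorphism of $N$ preserves $\mathring N$, so also $\varphi(p)\in\mathring N$. Then
\[
\WW\bigl(\Gamma_\varphi(p)\bigr)=\di G(\nu(p))=(q,(\di G)_q)=(q,0)={\mathfrak i}_{\OO_N}(q)=\WW\bigl({\mathfrak i}_{\Delta_N}(q)\bigr),
\]
and the injectivity of $\WW$ on $\mathring\NN$ implies $\Gamma_\varphi(p)={\mathfrak i}_{\Delta_N}(q)$, i.e.\ $p=q$ and $\varphi(q)=q$. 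In particular $\nu(q)=q$.

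For the final equality $\sigma(z)=G(z)$ on $\mathring N\cap\Fix(\varphi)$, I just insert $\nu(z)=z$ into the definition \eqref{e:G}:
\[
G(z)=G(\nu(z))=(\sigma-K\circ\Gamma_\varphi)(z)=\sigma(z)-K(z,z)=\sigma(z),
\]
where the last step uses $K\circ{\mathfrak i}_{\Delta_N}=0$ from Proposition \ref{prop:neighborhood_map}(iv).

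There is no genuinely hard step here; the only point that requires care is ensuring that both sides of the equation $\WW(\Gamma_\varphi(p))=\WW({\mathfrak i}_{\Delta_N}(q))$ are evaluated at points of $\mathring\NN$, so that the injectivity of $\WW|_{\mathring\NN}$ from Proposition \ref{prop:neighborhood_map}(i) applies. This is precisely the reason the statement is restricted to $\mathring N$: on $\p N$ the map $\WW$ is only a diffeomorphism in a weaker boundary sense, and critical points of $G$ on $\p N$ need not correspond to fixed points of $\varphi$.
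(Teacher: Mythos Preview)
Your proof is correct and follows essentially the same approach as the paper's own argument: both directions are obtained by combining the defining relation $\WW\circ\Gamma_\varphi=\di G\circ\nu$ with Proposition \ref{prop:neighborhood_map}(i),(iii),(iv), and the equality $\sigma(z)=G(z)$ is read off from the definition \eqref{e:G} using $\nu(z)=z$ and $K\circ\mathfrak i_{\Delta_N}=0$. Your treatment is in fact slightly more careful than the paper's, since you explicitly verify that $\Gamma_\varphi(p)$ and $\mathfrak i_{\Delta_N}(q)$ lie in $\mathring\NN$ before invoking the injectivity of $\WW|_{\mathring\NN}$.
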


\begin{proof}
Let $z$ be a point in $\mathring N$. We suppose first that $\varphi(z)=z$. Then, $\Gamma_\varphi(z)\in\Delta_N$, and by (iii) in Proposition \ref{prop:neighborhood_map}, we have $\WW(\Gamma_\varphi(z))=\mathfrak i_{\OO_N}(z)$, which implies that $\nu(z)=z$ and $\di_z G=0$. Moreover, by \eqref{e:G} and Proposition \ref{prop:neighborhood_map}.(iv), we have
\[
G(z)=\sigma(\nu^{-1}(z))-K\big(\Gamma_\varphi(\nu^{-1}(z))\big)=\sigma(z)-K\circ\mathfrak i_{\Delta_N}(z)=\sigma(z).
\]
Conversely, suppose that $z$ is a critical point $G$. Then, by \eqref{eq:dG}
\[
(z,z)=\mathfrak i_{\Delta_N}(z)=\WW^{-1}(\di G(z))=\Gamma_{\varphi}(\nu^{-1}(z))=(\nu^{-1}(z),\varphi(\nu^{-1}(z))),
\]
which implies $\nu^{-1}(z)=z$, and hence, $\varphi(z)=z$. 
\end{proof}

\begin{lem}\label{lem:normalization_G}
The generating function $G$ belongs to $\mathbb V$. Moreover, there holds
\begin{equation*}
\p^2_{\rho\rho}G|_{\p N}=\id_{\p N}-\Theta_\varphi\circ\mathfrak i_{\p N}.
\end{equation*}
In particular, for every $z\in\p N$, we have
\begin{equation*}
\varphi(z)=z\quad\iff\quad \p^2_{\rho\rho}G(z)=0,\quad\iff\quad\sigma(z)=0.
\end{equation*}
\end{lem}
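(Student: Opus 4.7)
The strategy is to read off everything from the explicit boundary formulas \eqref{e:gatbound} together with the normalization \eqref{e:normalization_sigma} of the action. Nothing in the statement is deep: the lemma just records consequences of the generating function construction near $\partial N$.

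First I would check the membership $G\in\mathbb V$. The vanishing $G|_{\p N}=0$ is immediate from the defining relation \eqref{e:G}: by \eqref{e:nurtheta} we have $\nu_\varphi|_{\p N}=\id_{\p N}$, and the normalization \eqref{e:normalization_sigma}(ii) gives $\sigma|_{\p N}=K\circ\Gamma_\varphi|_{\p N}$, so $G|_{\p N}=(\sigma-K\circ\Gamma_\varphi)|_{\p N}=0$. Next, $\di G$ vanishes on $\p N$ because, from \eqref{e:gatbound}, both $\p_\rho G$ and $\p_\vartheta G$ contain a factor of $\rho$ (indeed $\p_\vartheta G$ has a factor $\rho^2$). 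Finally, for the norm condition $\|\tfrac1r \di G|_\A\|_{C^1}<\infty$, the same formulas give on $\A''$
\[
\tfrac{1}{\rho}\p_\rho G(\rho,\vartheta)=\vartheta-\Theta_\varphi\bigl(r_\varphi(\rho,\vartheta),\vartheta\bigr),\qquad \tfrac{1}{\rho}\p_\vartheta G(\rho,\vartheta)=-\rho\bigl(\tfrac12 s_\varphi^2+s_\varphi\bigr),
\]
both of which are manifestly smooth on $\A''$ with finite $C^1$-norm (here I use that $\Theta_\varphi$, $r_\varphi=\rho(1+s_\varphi)$ and $s_\varphi$ are $C^1$ on $\A''$, which in turn follows from $\varphi\in\mathbb E(\epsilon_*)$). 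Outside $\A''$ smoothness of $G$ is automatic. Replacing the restriction to $\A$ by the equivalent restriction to $\A''$ gives $G\in\mathbb V$.

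Second, I would compute $\p^2_{\rho\rho}G|_{\p N}$ by differentiating the first line of \eqref{e:gatbound}:
\[
\p^2_{\rho\rho}G(\rho,\vartheta)=\bigl(\vartheta-\Theta_\varphi(r_\varphi,\vartheta)\bigr)-\rho\,\p_r\Theta_\varphi(r_\varphi,\vartheta)\,\p_\rho r_\varphi(\rho,\vartheta).
\]
Setting $\rho=0$ and using $r_\varphi(0,\vartheta)=0$ yields $\p^2_{\rho\rho}G(0,\vartheta)=\vartheta-\Theta_\varphi(0,\vartheta)$, which under the identification $\p N\cong S^1$ via $\vartheta$ is exactly $\id_{\p N}-\Theta_\varphi\circ\mathfrak i_{\p N}$. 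The $C^1$-closeness of $\varphi$ to the identity provided by $\varphi\in\mathbb E(\epsilon_*)$ makes $\vartheta-\Theta_\varphi(0,\vartheta)$ well-defined as an $\R$-valued quantity via the canonical local lift.

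Third, for the equivalences at a point $z=(0,\vartheta)\in\p N$: since $\varphi$ preserves $\p N$ we have $R_\varphi(0,\vartheta)=0$, so $\varphi(z)=z$ iff $\Theta_\varphi(0,\vartheta)=\vartheta$. Combined with the formula just derived, this is equivalent to $\p^2_{\rho\rho}G(z)=0$. For the remaining equivalence, use again the normalization \eqref{e:normalization_sigma}(ii) together with the explicit form \eqref{e:kaboundary} of $K_\A$: at $r=R=0$ we obtain $K_\A(0,\vartheta,0,\Theta_\varphi(0,\vartheta))=k(\vartheta-\Theta_\varphi(0,\vartheta))$, hence $\sigma(z)=k\,\p^2_{\rho\rho}G(z)$; since $k=\int_N\di\lambda>0$, the vanishing of $\sigma(z)$ is equivalent to the vanishing of $\p^2_{\rho\rho}G(z)$.

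The main (minor) point to watch is the interpretation of $\id_{\p N}-\Theta_\varphi\circ\mathfrak i_{\p N}$: $\Theta_\varphi$ is a priori $S^1$-valued, but the $C^1$-smallness inherent in $\mathbb E(\epsilon_*)$ ensures a canonical real lift, so the identities $\p^2_{\rho\rho}G=\id-\Theta_\varphi$ and $\sigma=k(\id-\Theta_\varphi)$ on $\p N$ hold as real functions. Beyond this there is no genuine obstacle; the lemma is a direct unpacking of \eqref{e:gatbound}, \eqref{e:kaboundary} and \eqref{e:normalization_sigma}.
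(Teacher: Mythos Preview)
Your proposal is correct and follows essentially the same route as the paper: vanishing of $G$ at the boundary via \eqref{e:normalization_sigma}(ii), vanishing of $\di G$ via \eqref{e:gatbound}, the formula for $\p^2_{\rho\rho}G|_{\p N}$ from the first line of \eqref{e:gatbound}, and the final equivalence via \eqref{e:kaboundary} and \eqref{e:normalization_sigma}. The only cosmetic difference is that the paper obtains $\p^2_{\rho\rho}G(0,\vartheta)$ by dividing $\p_\rho G$ by $\rho$ and letting $\rho\to0$ (using $\p_\rho G(0,\vartheta)=0$), whereas you differentiate directly; also, your verification of the finiteness of $\|\tfrac1\rho\di G\|_{C^1}$ is superfluous for membership in $\mathbb V$, since by Definition~\ref{d:normc2+} only the vanishing of $G$ and $\di G$ at $\p N$ is required.
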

\begin{proof}
The vanishing of $G$ at the boundary follows from (ii) in \eqref{e:normalization_sigma}. To prove the vanishing of the differential of $G$ at the boundary, we just substitute $\rho=0$ in \eqref{e:gatbound}. Moreover, dividing the first equation in \eqref{e:gatbound} by $\rho$ and taking the limit for $\rho$ going to $0$, we obtain the formula for $\p^2_{\rho\rho}G$, which also implies the first equivalence above. The second one follows from \eqref{e:kaboundary} and \eqref{e:normalization_sigma}.
\end{proof}
By the previous lemma we have a map
\begin{equation*}\label{e:imageG}
\mathcal G:\mathbb E(\epsilon_*)\to\mathbb V,\qquad \mathcal G(\varphi)=G_\varphi,
\end{equation*}
whose properties we will study. To this aim, we need a definition and two lemmas about functions on $\A$.
\begin{dfn}\label{d:c0+}
Fix a positive integer $m$. Let us denote by $\mathbb F$ the space of all smooth functions $\hat f:\A\to \R^m$ and by $\Vert\cdot\Vert_{\mathbb F}$ the norm on $\mathbb F$ defined by
\begin{equation*}
\Vert \hat f\Vert_{\mathbb F}:=\Vert \hat f\Vert_{C^0}+\Vert r\di \hat f\Vert_{C^0},\qquad \forall\, \hat f\in\mathbb F.
\end{equation*}
Let $\mathbb F_0\subset \mathbb F$ be the subspace of those functions $f:\A\to\R^m$ such that $f(0,\theta)=0$, for all $\theta\in S^1$. In this case, there exists a unique $\hat f\in\mathbb F$ such that
\begin{equation*}
f(r,\theta)=r\hat f(r,\theta),\qquad \forall\,(r,\theta)\in\A. 
\end{equation*}
\end{dfn} 
\begin{lem}\label{l:divi}
The following two statements hold.
\begin{enumerate}[(i)]
\item The map $(\mathbb F_0,\Vert\cdot\Vert_{C^1})\to(\mathbb F,\Vert\cdot\Vert_{\mathbb F})$, $f\mapsto \hat f$ is an isomorphism of pre-Banach spaces. 
\item Let $U$ be an open set of $\R^m$, and let $A:U\to \R^m$ be a $C^2$-function with $\Vert A\Vert_{C^2}<\infty$. If $\mathbb F_U$ is the set of all functions $\hat f\in\mathbb F$ such that the image of $\hat f$ is a relatively compact subset of $U$, then the following map is continuous:
\begin{equation*}
(\mathbb F_U,\Vert\cdot\Vert_{\mathbb F})\to (\mathbb F,\Vert \cdot\Vert_{\mathbb F}),\qquad \hat f\mapsto A\circ \hat f.
\end{equation*}
\end{enumerate}
\end{lem}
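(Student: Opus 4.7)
The plan is to treat the two parts essentially independently: part (i) is the core divisibility-by-$r$ statement, and part (ii) is a Moser-type continuity statement for the composition operator that only uses (i) indirectly.

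For part (i), the map $f \mapsto \hat f$ is clearly linear and bijective: given $\hat f\in \mathbb F$, the product $f(r,\theta)=r\hat f(r,\theta)$ lies in $\mathbb F_0$, and conversely for $f\in\mathbb F_0$ the Taylor expansion with integral remainder
\[
\hat f(r,\theta):=\int_0^1 \partial_r f(tr,\theta)\,\di t
\]
gives a smooth function on $\A$ satisfying $f=r\hat f$. I would prove the norm equivalence in both directions. In one direction, this explicit formula yields $\Vert\hat f\Vert_{C^0}\leq \Vert\p_r f\Vert_{C^0}\leq \Vert f\Vert_{C^1}$, while the identity $r\di\hat f=\di f-\hat f\di r$ (obtained by differentiating $f=r\hat f$) yields $\Vert r\di\hat f\Vert_{C^0}\leq \Vert f\Vert_{C^1}+\Vert\hat f\Vert_{C^0}\leq 2\Vert f\Vert_{C^1}$; combining gives $\Vert\hat f\Vert_{\mathbb F}\leq 3\Vert f\Vert_{C^1}$. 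In the other direction, $\Vert f\Vert_{C^0}\leq a\Vert\hat f\Vert_{C^0}$ since $r\in[0,a)$, and again $\Vert\di f\Vert_{C^0}\leq \Vert\hat f\Vert_{C^0}+\Vert r\di\hat f\Vert_{C^0}\leq\Vert\hat f\Vert_{\mathbb F}$, so $\Vert f\Vert_{C^1}\leq (a+1)\Vert\hat f\Vert_{\mathbb F}$. This establishes (i).

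For part (ii), I would prove the slightly stronger statement that the map $\hat f\mapsto A\circ\hat f$ is Lipschitz on $\mathbb F_U$ (with a constant depending on $\Vert A\Vert_{C^2}$ and on the chosen relatively compact set). For the $C^0$-part, the mean value inequality gives
\[
\Vert A\circ\hat f-A\circ\hat g\Vert_{C^0}\leq \Vert \di A\Vert_{C^0}\Vert \hat f-\hat g\Vert_{C^0}.
\]
For the weighted-derivative part, I would use the chain rule identity
\[
r\di(A\circ\hat f)=(\di A\circ\hat f)\cdot(r\di\hat f),
\]
which makes sense precisely because $r\di\hat f$ is the bounded quantity controlled by $\mathbb F$. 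Subtracting the analogous expression for $\hat g$ and splitting as
\[
(\di A\circ \hat f)(r\di\hat f-r\di\hat g)+\bigl((\di A\circ\hat f)-(\di A\circ\hat g)\bigr)(r\di\hat g),
\]
the first summand is bounded by $\Vert\di A\Vert_{C^0}\Vert r\di\hat f-r\di\hat g\Vert_{C^0}$, and the second by $\Vert\di^2 A\Vert_{C^0}\Vert\hat f-\hat g\Vert_{C^0}\Vert r\di\hat g\Vert_{C^0}$, both of which are absorbed into a constant times $\Vert\hat f-\hat g\Vert_{\mathbb F}$.

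I do not expect any real obstacle here: the only subtlety is remembering that $\mathbb F$ is a pre-Banach space (no completeness is claimed), so everything is a bare norm estimate; and the role of the $r$-weight in (ii) is exactly to tame the factor of $\di\hat f$ that arises when differentiating the composition. Both assertions reduce to elementary manipulations once one writes down $\di f=\hat f\di r+r\di\hat f$ and the chain rule.
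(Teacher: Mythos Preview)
Your proof is correct and follows essentially the same approach as the paper: for (i) you use the same Taylor integral formula and the identity $\di f=\hat f\,\di r+r\,\di\hat f$ to control the norms in both directions, and for (ii) you use the same chain-rule splitting of $r\,\di(A\circ\hat f)-r\,\di(A\circ\hat g)$ into a term controlled by $\Vert r\,\di\hat f-r\,\di\hat g\Vert_{C^0}$ and one controlled by $\Vert\hat f-\hat g\Vert_{C^0}\Vert r\,\di\hat g\Vert_{C^0}$. One small remark: your Lipschitz claim in (ii) is only local, since the constant depends on $\Vert r\,\di\hat g\Vert_{C^0}$, which is not uniformly bounded on $\mathbb F_U$; but this is exactly what the paper proves and is all that is needed for continuity.
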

\begin{proof}
By Taylor's theorem with integral remainder, the function $\hat f$ is defined as
\begin{equation}\label{eq:hat_f}
\hat f(r,\theta)=\int_0^1\p_r f(ur,\theta)\,\di u.
\end{equation}
Moreover, differentiating the identity $f=r\hat f$, we deduce that
\begin{equation}\label{eq:dH}
\di f=r\di \hat f+\hat f\di r.
\end{equation}
We see from \eqref{eq:hat_f} that the $C^0$-norm of $\hat f$ is controlled by the $C^1$-norm of $f$. Consequently, from \eqref{eq:dH}, we conclude that the $C^0$-norm of $r\di \hat f=\di f-\hat f\di r$ is also controlled by the $C^1$-norm of $f$. On the other hand, we deduce from \eqref{eq:dH} that the $C^0$-norm of $\di f$ is controlled by the $C^0$-norm of $r\di \hat f$ and $\hat f$. As $f$ vanishes at $r=0$, the $C^0$-norm of $f$ is controlled, as well.

Finally, we consider a map $A:U\to \R^m$ as in the statement. Let $\hat f_0\in\mathbb F_U$ be fixed and $\hat f\in \mathbb F_U$ such that $\hat f_0+r(\hat f-\hat f_0)\in\mathbb F_U$, for all $r\in[0,1]$. This happens if $\hat f$ is $C^0$-close to $\hat f_0$ since the images of $\hat f_0$ and $\hat f$ are relatively compact in $U$, by assumption. Then, we can estimate with the help of the mean value theorem:
\begin{align*}
\Vert A\circ \hat f-A\circ \hat f_0\Vert_{C^0}&\leq \Vert A\Vert_{C^1}\Vert \hat f-\hat f_0\Vert_{C^0};\\[1ex]
\big\Vert r\di(A\circ \hat f-A\circ \hat f_0)\big\Vert_{C^0}&=\big\Vert (r\di_{\hat f}A\cdot \di \hat f-r\di_{\hat f}A\cdot\di \hat f_0)+(r\di_{\hat f}A\cdot \di \hat f_0-r\di_{\hat f_0}A\cdot \di \hat f_0)\big\Vert_{C^0}\\
&\leq \big\Vert \di_{\hat f} A\cdot r\di (\hat f-\hat f_0)\big\Vert_{C^0}+\big\Vert (\di_{\hat f} A-\di_{\hat f_0}A)\cdot r\di \hat f_0\big\Vert_{C^0}\\
&\leq \Vert A\Vert_{C^1}\Vert r\di (\hat f-\hat f_0)\Vert_{C^0}+\Vert A\Vert_{C^2}\Vert \hat f-\hat f_0\Vert_{C^0}\Vert r\di \hat f_0\Vert_{C^0},
\end{align*}
from which the continuity of the map $\hat f\mapsto A\circ \hat f$ at $\hat f_0$ follows.
\end{proof}

\begin{lem}\label{lem:functions_g_h}
Let $f:\A\to \R$ be a function such that, for all $\vartheta\in S^1$, we have $f(0,\vartheta)=0$, $\di_{(0,\vartheta)}f=0$. Then, there exist functions $f_\rho, f_\vartheta:\A\to \R$ such that 
\[
\p_\rho f=\rho f_\rho,\quad \p_\vartheta f=\rho^2 f_\vartheta.
\]
Moreover, there exists a constant $C>0$ (independent of $f$) such that
\begin{equation*}
\frac1C\left\Vert\frac{1}{\rho}\di f\right\Vert_{C^1}\leq \Vert f_\rho\Vert_{C^1}+\Vert f_\vartheta\Vert_{\mathbb F}\leq C\left\Vert\frac{1}{\rho}\di f\right\Vert_{C^1}.
\end{equation*}
\end{lem}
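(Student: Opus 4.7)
The plan is to construct $f_\rho$ and $f_\vartheta$ by iterating the division-by-$\rho$ argument underlying Lemma \ref{l:divi}(i), and then to read off the claimed bilateral estimate from the identity $\tfrac{1}{\rho}\di f = f_\rho\,\di\rho + \rho f_\vartheta\,\di\vartheta$.

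First I would unpack the hypothesis $\di_{(0,\vartheta)}f=0$: it asserts that both $\p_\rho f$ and $\p_\vartheta f$ vanish identically along $\{\rho=0\}$, so each lies in the space $\mathbb F_0$ of Definition \ref{d:c0+} (taken with $m=1$). A first application of Lemma \ref{l:divi}(i) to $\p_\rho f$ produces a smooth function $f_\rho:\A\to\R$ with $\p_\rho f=\rho f_\rho$, and applying the same lemma to $\p_\vartheta f$ yields a smooth $g:\A\to\R$ with $\p_\vartheta f=\rho g$.

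To upgrade $g$ into $f_\vartheta$ I need one further division, and for this I would check that $g$ itself lies in $\mathbb F_0$. Differentiating $\p_\vartheta f=\rho g$ in $\rho$ and evaluating at $\rho=0$ gives
\[
g(0,\vartheta)=\p_\rho\p_\vartheta f(0,\vartheta)=\p_\vartheta(\p_\rho f)(0,\vartheta),
\]
which vanishes because $\p_\rho f$ is identically zero along $\{\rho=0\}$. A second appeal to Lemma \ref{l:divi}(i) then delivers $f_\vartheta:\A\to\R$ with $g=\rho f_\vartheta$, i.e.\ $\p_\vartheta f=\rho^2 f_\vartheta$.

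For the norm equivalence the argument is essentially bookkeeping. By construction,
\[
\frac{1}{\rho}\di f \;=\; f_\rho\,\di\rho + (\rho f_\vartheta)\,\di\vartheta,
\]
so its $C^1$-norm is comparable, with constants depending only on $a$, to $\Vert f_\rho\Vert_{C^1}+\Vert \rho f_\vartheta\Vert_{C^1}$. The function $\rho f_\vartheta$ belongs to $\mathbb F_0$, and the pre-Banach isomorphism of Lemma \ref{l:divi}(i) identifies $\Vert \rho f_\vartheta\Vert_{C^1}$ with $\Vert f_\vartheta\Vert_{\mathbb F}$ up to a universal constant. Chaining both comparisons in both directions yields the claimed bilateral estimate.

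I do not anticipate a genuine obstacle: the whole proof reduces to two applications of a Hadamard-style division lemma plus a single coefficient comparison. The only point worth checking is that $f_\rho$, $g$, and $f_\vartheta$ remain smooth up to and including $\rho=0$, but this follows at once from the integral representations produced by the argument in Lemma \ref{l:divi}(i) and differentiation under the integral sign.
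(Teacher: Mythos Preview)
Your proof is correct and follows essentially the same approach as the paper: the paper factors $f=\rho^2\hat{\hat f}$ via Taylor's theorem and then differentiates to obtain $f_\rho=2\hat{\hat f}+\rho\p_\rho\hat{\hat f}$ and $f_\vartheta=\p_\vartheta\hat{\hat f}$, whereas you apply the division lemma directly to $\p_\rho f$ and (twice) to $\p_\vartheta f$, which produces the same functions. The norm-equivalence argument via $\tfrac{1}{\rho}\di f=f_\rho\,\di\rho+\rho f_\vartheta\,\di\vartheta$ and Lemma~\ref{l:divi}(i) is identical to the paper's.
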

\begin{proof}
By Taylor's theorem with integral remainder, for all $(\rho,\vartheta)\in\A$, we can write
\[
f(\rho,\vartheta)=\rho^2 \hat{\hat f}(\rho,\vartheta),
\]
for a function $\hat{\hat f}:\A\to \R$, so that $f_\rho:=2\hat{\hat f}+\rho\p_\rho \hat{\hat f}$, $f_\vartheta:=\p_{\vartheta}\hat{\hat f}$ yield the desired functions. In order to prove the equivalence of the norms, we observe that $\tfrac{1}{\rho}\di f=f_\rho\di \rho+\rho f_\vartheta\di\vartheta$. Thus, $\tfrac1\rho\di f$ is $C^1$-small if and only if $f_\rho$ is $C^1$-small and $\rho f_\vartheta$ is $C^1$-small. The conclusion now follows from Lemma \ref{l:divi}.(i).
\end{proof}
\begin{prp}\label{p:contg}
The map $\mathcal G:\mathbb E(\epsilon_*)\to \mathbb V$ is continuous from the $C^1$-topology to the topology induced by $\Vert\cdot\Vert_{\mathbb V}$.
\end{prp}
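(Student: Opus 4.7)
The plan is to decompose $\|G_\varphi\|_{\mathbb V} = \|G_\varphi\|_{C^2} + \|\tfrac{1}{r}\di G_\varphi|_{\A}\|_{C^1}$ and verify continuity of each summand separately as a function of $\varphi \in \mathbb E(\epsilon_*)$ in the $C^1$-topology.

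For the $C^2$-part, the starting point is to rearrange identity \eqref{eq:dG} as $\di G_\varphi = \WW \circ \Gamma_\varphi \circ \nu_\varphi^{-1}$. Here $\WW$ is fixed and smooth by Proposition \ref{prop:neighborhood_map}, and the assignment $\varphi \mapsto \Gamma_\varphi$ is trivially $C^1 \to C^1$-continuous. Since $\nu_\varphi$ is a $C^1$-small perturbation of $\id_N$ for $\varphi \in \mathbb E(\epsilon_*)$, the standard inversion estimate for $C^1$-near-identity diffeomorphisms provides the $C^1 \to C^1$-continuity of $\varphi \mapsto \nu_\varphi^{-1}$. Composing these three $C^1$-continuous ingredients gives $C^1 \to C^1$-continuity of $\varphi \mapsto \di G_\varphi$ in the space of one-forms on $N$. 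Combined with the boundary normalisation $G_\varphi|_{\p N} = 0$ from Lemma \ref{lem:normalization_G}, this upgrades to $C^2$-continuity of $\varphi \mapsto G_\varphi$.

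For the weighted part on the collar, Lemma \ref{lem:functions_g_h} applied to $G_\varphi \in \mathbb V$ yields factorisations $\p_\rho G_\varphi = \rho f_\rho^\varphi$ and $\p_\vartheta G_\varphi = \rho^2 f_\vartheta^\varphi$, together with an equivalence between $\|\tfrac{1}{\rho}\di G_\varphi\|_{C^1}$ and $\|f_\rho^\varphi\|_{C^1} + \|f_\vartheta^\varphi\|_{\mathbb F}$. Comparing with \eqref{e:gatbound} identifies
\[
f_\rho^\varphi(\rho,\vartheta) = \vartheta - \Theta_\varphi(r_\varphi(\rho,\vartheta), \vartheta), \qquad f_\vartheta^\varphi = -\tfrac12 s_\varphi^2 - s_\varphi,
\]
where $r_\varphi(\rho,\vartheta) = \rho(1 + s_\varphi(\rho,\vartheta))$ is the fibrewise inverse of $R_\varphi(\,\cdot\,,\vartheta)$. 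The task thus reduces to showing $C^1 \to C^1$-continuity of $\varphi \mapsto f_\rho^\varphi$ and $C^1 \to \|\cdot\|_{\mathbb F}$-continuity of $\varphi \mapsto f_\vartheta^\varphi$.

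The main obstacle is the $\|\cdot\|_{\mathbb F}$-continuity, which hinges on the following key observation. Since $R_\varphi(0,\theta) = 0$, the fibrewise inverse $r_\varphi$ satisfies $r_\varphi(0,\vartheta) = 0$, so $r_\varphi - \rho$ lies in $\mathbb F_0$ and its ``hat'' in the sense of Lemma \ref{l:divi}.(i) is precisely $s_\varphi$. Classical inverse-function-theorem continuity, applied fibrewise in $r$ using that $\p_r R_\varphi$ is uniformly close to $1$ on $\mathbb E(\epsilon_*)$, yields $\varphi \mapsto r_\varphi - \rho$ continuous from $C^1$ to $C^1$, and the isomorphism of pre-Banach spaces in Lemma \ref{l:divi}.(i) transfers this to continuity of $\varphi \mapsto s_\varphi$ into $(\mathbb F, \|\cdot\|_{\mathbb F})$. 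Invoking Lemma \ref{l:divi}.(ii) with $A(s) := -\tfrac12 s^2 - s$ then delivers the desired continuity of $f_\vartheta^\varphi$ in $\|\cdot\|_{\mathbb F}$. The $C^1$-continuity of $f_\rho^\varphi$ is comparatively routine: $\Theta_\varphi$ is tautologically $C^1$-continuous in $\varphi$, the map $(\rho,\vartheta) \mapsto (r_\varphi(\rho,\vartheta),\vartheta)$ is $C^1$-continuous in $\varphi$ by the same inverse-function argument, and the composition of $C^1$-continuous maps is $C^1$-continuous. Assembling these pieces proves the proposition.
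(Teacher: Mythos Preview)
Your proof is correct and follows essentially the same route as the paper: both split into the $C^2$-part via $\di G_\varphi=\WW\circ\Gamma_\varphi\circ\nu_\varphi^{-1}$ and the weighted collar part via the explicit formulas \eqref{e:gatbound}, then handle $f_\vartheta^\varphi=-\tfrac12 s_\varphi^2-s_\varphi$ using Lemma~\ref{l:divi}.(i) for $\varphi\mapsto s_\varphi$ followed by Lemma~\ref{l:divi}.(ii) with $A(x)=-\tfrac12 x^2-x$, and $f_\rho^\varphi$ by $C^1$-continuity of composition. The only cosmetic difference is that you invoke Lemma~\ref{lem:functions_g_h} explicitly to set up the norm equivalence, whereas the paper appeals directly to \eqref{e:gatbound} and Lemma~\ref{l:divi}.(i).
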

\begin{proof}
Since we can write $\di G=\WW\circ\Gamma_\varphi\circ\nu^{-1}$, we readily see that the map $\mathcal G$ is continuous from the $C^1$-topology to the topology induced by the $C^2$-norm. The lemma follows if we can establish the continuity from the $C^1$-topology to the topology induced by the semi-norm $\Vert\tfrac 1\rho \di (\,\cdot\,)|_{\A''}\Vert_{C^1}$. If $\pi_{S^1}:\A''\to S^1$ is the standard projection, then, using equations \eqref{e:gatbound}, this amounts to showing that the map
\[
\varphi\mapsto\pi_{S^1}-\Theta_\varphi\circ\nu_\varphi^{-1}
\]
is continuous from the $C^1$-topology to the $C^1$-topology, and further employing Lemma \ref{l:divi}.(i), that the map
\[
\varphi\mapsto -\tfrac12 s_\varphi^2-s_\varphi
\]
is continuous from the $C^1$-topology to the $\|\cdot\|_{\mathbb F}$-topology. The former map is continuous since $(f_1,f_2)\mapsto f_1\circ f_2$ is continuous from the product $C^1$-topology into the $C^1$-topology and
\[
\varphi\mapsto \Theta_\varphi,\qquad \varphi\mapsto \nu_\varphi^{-1}=(r_\varphi,\pi_{S^1})
\]
are continuous in the $C^1$-topology. The latter map is continuous since
\begin{enumerate}[(a)]
\item the map $\varphi\mapsto s_\varphi=\tfrac1\rho (r_\varphi-\rho)$ is continuous from the $C^1$-topology to the $\|\cdot\|_{\mathbb F}$-topology by Lemma \ref{l:divi}.(i);
\item the map $\hat f\mapsto A\circ \hat f$ with $A(x)=-\tfrac12 x^2-x$ is continuous from the $\|\cdot\|_{\mathbb F}$-topology to the $\|\cdot\|_{\mathbb F}$-topology by Lemma \ref{l:divi}.(ii). 
\end{enumerate}
Putting everything together, we have shown that $\mathcal G$ is continuous.
\end{proof}

It is well known that the map $\mathcal W$ translates the standard Hamiltonian-Jacobi equation for exact Lagrangian graphs in $\ta^*N$ to the Hamilton-Jacobi equation for $C^1$-small exact diffeomorphisms. Namely, for every differentiable path $t\mapsto\varphi_t\subset \mathbb E(\epsilon_*)$ with $\nu_t:=\nu_{\varphi_t}$ and generated by some $t\mapsto H_t$, the corresponding path $t\mapsto G_t:=\mathcal G(\varphi_t)\subset\mathbb V$ has a smooth pointwise derivative $t\mapsto \tfrac{\di G_t}{\di t}\subset\mathbb V$ which satisfies the Hamilton-Jacobi equation:
\begin{equation}\label{e:hj}
\frac{\di G_t}{\di t}\circ \nu_t=H_t\circ\varphi_t.
\end{equation}
By continuity it is enough to show \eqref{e:hj} on the interior $\mathring N$ where $d\lambda$ is symplectic. We define the extended Hamiltonian $\widetilde H_t:\mathring N\times \mathring N\to\R$ by $\widetilde H_t(q,Q):=H_t(Q)$. It generates $\tilde\varphi_t:=\id\times \varphi_t$ on $\mathring N\times \mathring N$ which is Hamiltonian with respect to $(-\di\lambda)\oplus\di\lambda$. By definition, $\Gamma_{\varphi_t}=\tilde{\varphi}_t\circ \mathfrak i_{\Delta_{\mathring N}}$. Thus, if we write $\psi_t:\mathcal T\to \ta^*{\mathring N}$ for the Hamiltonian diffeomorphisms defined on a neighbourhood of $\mathcal O_{\mathring N}\subset\ta^*{\mathring N}$ generated by $\widetilde H_t\circ \mathcal W^{-1}$, we get $\di G_t\circ \nu_t=\psi_t\circ\mathfrak i_{\mathcal O_{\mathring N}}$ by \eqref{eq:dG}. Hence, $G_t$ solves the classical Hamilton-Jacobi equation with respect to $\widetilde H_t\circ \mathcal W^{-1}$ \cite[Section 46D]{Arn}, i.e.~$\frac{\di G_t}{\di t}=\widetilde H_t\circ \mathcal W^{-1}(\di G_t)$. 
From the definition of $\widetilde H_t$ and identity \eqref{eq:dG}, we obtain \eqref{e:hj}.

\begin{rmk}
If we endow $\mathbb E(\epsilon_*)$ with the $C^2$-topology (instead of the coarser $C^1$-topology), then the map $\mathcal G$ becomes of class $C^1$, and for all $\varphi\in \mathbb E(\epsilon_*)$ and $H\in\mathbb V\cong\ta_\varphi\mathbb E(\epsilon_*)$, we can rephrase the equation in the statement of the proposition as
\[
\di_\varphi\mathcal G\cdot H=H\circ(\varphi\circ\nu^{-1}).
\]
\end{rmk}
\begin{prp}\label{prp:bijectivity_of_Xi}
There are $\delta_{*},\epsilon_{**}>0$ and a continuous map $\mathcal E:\mathbb V(\delta_{*})\to \mathbb E(\epsilon_*)$ such that
\begin{enumerate}[(i)]
	\item we have the inclusion $\mathcal G(\mathbb E(\epsilon_{**}))\subset\mathbb V(\delta_{*})$;
	\item the map $\mathcal E$ is the inverse of $\mathcal G$, namely,
	\begin{equation*}
\bullet\ \ \mathcal G\big(\mathcal E(G)\big)=G,\quad \forall\, G\in \mathbb V(\delta_{*}),\qquad \bullet\ \ \mathcal E(\mathcal G(\varphi))=\varphi,\quad\forall\, \varphi\in\mathbb E(\epsilon_{**}).	
	\end{equation*}
\end{enumerate}
\end{prp}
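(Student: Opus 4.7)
The plan is to construct, for every $G \in \mathbb{V}(\delta_*)$ with $\delta_*$ sufficiently small, an exact diffeomorphism $\varphi = \mathcal{E}(G)$ whose graph $\Gamma_\varphi$ satisfies the defining identity \eqref{eq:dG}, namely $\mathcal{W} \circ \Gamma_\varphi = \di G \circ \nu_\varphi$. Geometrically, $\varphi$ is the diffeomorphism whose graph in $\bigl(N \times N, (-\di\lambda) \oplus \di\lambda\bigr)$, transported through the Weinstein chart $\mathcal{W}$ of Proposition \ref{prop:neighborhood_map}, coincides with the exact Lagrangian section $\di G$ of $\ta^*N$.

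First I would check that, for $\delta_*$ small, the image $\di G(N)$ is contained in $\mathcal{T}$. On the interior this is immediate, since $\mathring{\mathcal{T}}$ is an open neighbourhood of $\mathcal{O}_{\mathring N}$ and $\di G$ is $C^0$-small. Near the boundary, the explicit description \eqref{e:imagewa} of $\mathcal{W}_{\mathbb{A}'}(\mathcal{Y}')$ translates the containment into the pointwise bounds $\p_\rho G \in (-\tfrac12\rho, \tfrac12\rho)$ and $\p_\vartheta G \leq \tfrac12\rho^2$, with the lower bound on $\p_\vartheta G$ harmless for small $\rho$. Writing $\p_\rho G = \rho G_\rho$ and $\p_\vartheta G = \rho^2 G_\vartheta$ as in Lemma \ref{lem:functions_g_h}, these amount to $|G_\rho|$ and $|G_\vartheta|$ being bounded by small absolute constants, which is ensured by $\Vert G \Vert_{\mathbb V} < \delta_*$.

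Next I would define $\widetilde\Gamma_G : N \to N \times N$ as $\mathcal{W}^{-1} \circ \di G$ on the interior, extended to the collar via the explicit inverse \eqref{e:expinvwa}:
\[
\widetilde\Gamma_G(\rho,\vartheta) = \Bigl(\bigl(\rho\sqrt{1-2G_\vartheta(\rho,\vartheta)},\,\vartheta\bigr),\ \bigl(\rho,\,\vartheta - G_\rho(\rho,\vartheta)\bigr)\Bigr).
\]
Lemmas \ref{l:divi} and \ref{lem:functions_g_h} guarantee that $\widetilde\Gamma_G$ is $C^1$ and $C^1$-close to $\mathfrak{i}_{\Delta_N}$. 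Setting $\nu_G^{-1} := \pi_1 \circ \widetilde\Gamma_G$ yields a $C^1$-small perturbation of $\id_N$ and hence a diffeomorphism of $N$; I then declare $\mathcal{E}(G) := \varphi$ by $\varphi \circ \nu_G := \pi_2 \circ \widetilde\Gamma_G$, which is equivalent to $\Gamma_\varphi \circ \nu_G^{-1} = \widetilde\Gamma_G$ and thus to the relation \eqref{eq:dG}.

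To conclude, I would pull back $\mathcal{W}^*\lambda_\mathrm{can} = (-\lambda)\oplus\lambda - \di K$ along $\Gamma_\varphi$, exactly as in \eqref{e:nueta}, obtaining $\varphi^*\lambda - \lambda = \di\bigl(K\circ\Gamma_\varphi + G\circ\nu_G\bigr)$; this shows that $\varphi$ is exact with action $\sigma = K\circ\Gamma_\varphi + G\circ\nu_G$, and a direct check using $\nu_G|_{\p N} = \id_{\p N}$ and $G|_{\p N} = 0$ confirms the boundary normalisation \eqref{e:normalization_sigma}, placing $\varphi$ in $\mathbb{E}(\epsilon_*)$. The identity $\mathcal{G}(\mathcal{E}(G)) = G$ then follows at once from \eqref{e:G}, since $G_\varphi = (\sigma - K\circ\Gamma_\varphi)\circ\nu_G^{-1} = G$. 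For item (i) and the reverse identity $\mathcal{E}(\mathcal{G}(\varphi)) = \varphi$, the continuity of $\mathcal{G}$ from Proposition \ref{p:contg} lets me pick $\epsilon_{**} \leq \epsilon_*$ with $\mathcal{G}(\mathbb{E}(\epsilon_{**})) \subset \mathbb{V}(\delta_*)$, and the uniqueness of $\Gamma_\varphi$ solving \eqref{eq:dG} forces $\mathcal{E}(\mathcal{G}(\varphi)) = \varphi$. Continuity of $\mathcal{E}$ follows by tracking each step of the construction, since $G \mapsto (G_\rho, G_\vartheta)$ is continuous from $\Vert\cdot\Vert_{\mathbb V}$ to $\Vert\cdot\Vert_{C^1}\times\Vert\cdot\Vert_{\mathbb F}$ by Lemma \ref{lem:functions_g_h}, and the remaining operations (square root, composition, inversion) are continuous in these topologies by Lemma \ref{l:divi}. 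The main obstacle throughout is the boundary: since $\mathcal{W}$ degenerates at $\p N$, the extension of $\mathcal{W}^{-1}$ must be carried out by hand via the explicit collar formula, and the precise tailoring of the $\mathbb{V}$-norm---in particular the term $\Vert\tfrac{1}{r}\di G|_{\mathbb A}\Vert_{C^1}$---is what makes this extension a genuine $C^1$ map and simultaneously secures the image condition $\di G(N) \subset \mathcal{T}$.
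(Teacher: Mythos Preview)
Your proposal is correct and follows essentially the same route as the paper: construct $\varphi_G$ by composing $\mathcal{W}^{-1}$ with $\di G$ on the interior, extend across $\p N$ via the explicit collar formula \eqref{e:expinvwa}, and use Lemmas \ref{l:divi}--\ref{lem:functions_g_h} together with the $\mathbb{V}$-norm to secure both the image condition $\di G(N)\subset\mathcal T$ and the $C^1$-continuity of $\mathcal E$. One small slip: the definition $\varphi\circ\nu_G:=\pi_2\circ\widetilde\Gamma_G$ should read $\varphi\circ\nu_G^{-1}$, as your own restatement $\Gamma_\varphi\circ\nu_G^{-1}=\widetilde\Gamma_G$ confirms.
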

\begin{proof}
Let $\delta_*$ be a positive number. We first show that if $G\in\mathbb V(\delta_*)$, then $\di G$ takes values into $\mathcal T=\WW(\NN)$, provided $\delta_*$ is small enough. Since $\mathcal T$ is a neighbourhood of the zero section away from the boundary of $N$, we see that $\di G(N\setminus \A'')$ is contained in $\mathcal T$ if $\delta_*$ is small. On the other hand, since $\mathcal T\supset \mathcal W(\mathbb Y')$ from Proposition \ref{prop:neighborhood_map}.(i), we just need to show that $\di G(\A'')\subset\mathcal W(\mathbb Y')\cap (\ta^*\A'')$. Recall from \eqref{e:imagewa} the description
\[
\mathcal W(\mathbb Y')\cap (\ta^*\A'')=\Big\{(\rho,\vartheta,p_\rho,p_\vartheta)\in\ta^*\A''\ \ \Big|\ \ p_\rho\in \big(-\tfrac12\rho,\tfrac12\rho\big),\ \ p_\vartheta\in\big(\tfrac12\big(\rho^2-\tfrac{a^2}{4}\big),\tfrac12\rho^2\big]\Bigg\},
\]
so that the implication
\[
0\leq\rho<\tfrac a4\quad\Longrightarrow\quad \tfrac12\big(\rho^2-\tfrac{a^2}{4}\big)>-\tfrac32\rho^2,
\]
yields the implication
\[
(\rho,\vartheta,p_\rho,p_\vartheta)\in\mathcal W(\mathbb Y')\cap (\ta^*\A'')\quad\Longrightarrow\quad p_\vartheta\in(-\tfrac32\rho^2,\tfrac12\rho^2].
\]
By Lemma \ref{lem:functions_g_h}, we have the expressions $\p_\rho G=\rho G_\rho$ and $\p_\vartheta G=\rho^2 G_\vartheta$. Therefore, in order to have $\di G(\A'')\subset \mathcal W(\mathbb Y') \cap(\ta^*\A'')$, we just need $\Vert G_\rho\Vert_{C^0(\A'')}<\tfrac12$ and $\Vert G_\vartheta\Vert_{C^0(\A'')}<\tfrac12$, which are true if $\delta_*$ is small, thanks to the inequality in Lemma \ref{lem:functions_g_h} and the definition of $\Vert\cdot\Vert_{\mathbb V}$.

Since $\di G(\mathring N)\subset \mathring{\mathcal T}$, we can consider the map
\begin{equation}\label{e:checkmu}
\mathring\mu:\mathring N\to \mathring N,\quad\mathring\mu:=\pi_1\circ\WW^{-1}\circ \di G|_{\mathring N}, 
\end{equation}
where $\pi_1:N\x N\to N$ is the projection on the first factor. On the annulus $\A''$, we consider, furthermore, the map
\[
\mu_{\A''}:\A''\to\A',\qquad \mu_{\A''}(\rho,\vartheta)=\big(\rho\sqrt{1-2G_\vartheta(\rho,\vartheta)},\vartheta\big).
\]
Thanks to \eqref{e:expinvwa}, $\mathring \mu$ and $\mu_{\A''}$ glue together and yield a map $\mu_G:N\to N$. We claim that $G\mapsto \mu_G$ is continuous from the topology induced by $\Vert\cdot\Vert_{\mathbb V}$ to the $C^1$-topology. We argue separately for $\mathring \mu|_{N\setminus\A''}$ and $\mu_{\A''}$. For the former map, the continuity is clear from the expression \eqref{e:checkmu} and the fact that $\Vert G\Vert_{C^2}\leq \Vert G\Vert_{\mathbb V}$. For the latter map, the continuity is clear in the second factor, and we only have to deal with the continuity of $G\mapsto \rho\sqrt{1-2G_\vartheta}$. By Lemma \ref{l:divi}, this happens if and only if $G\mapsto \sqrt{1-2G_\vartheta}$ is continuous from the $\Vert\cdot\Vert_{\mathbb V}$-topology to the $\|\cdot\|_{\mathbb F}$-topology. The latter map is the composition of $G\mapsto G_\vartheta$ with $f\mapsto A\circ f$, where $A:(-\tfrac12,+\tfrac12)\to(0,\infty)$ is defined by $A(x)=\sqrt{1-2x}$. The map $G\mapsto G_\varphi$ is continuous from the $\Vert\cdot\Vert_{\mathbb V}$-topology to the $\|\cdot\|_{\mathbb F}$-topology by Lemma \ref{lem:functions_g_h}. The map $f\mapsto A\circ f$ is continuous from the $\|\cdot\|_{\mathbb F}$-topology to the $\|\cdot\|_{\mathbb F}$-topology by Lemma \ref{l:divi}.(ii). The claim is established.

Thus, taking $\delta_*$ small enough, we can assume that $\mu_G:N\to N$ is so $C^1$-close to the identity that is a diffeomorphism and we write $\nu_G:N\to N$ for its inverse, which satisfies
\begin{equation*}
\nu_G(r,\theta)=\big(R_G(r,\theta),\theta\big),\quad\forall\, (r,\theta)\in \A'',
\end{equation*}
for some function $R_G:\A''\to [0,a/2)$. The map $G\mapsto \nu_G$ is continuous in the $C^1$-topology.

We now construct a diffeomorphism $\varphi_G:N\to N$. Let $\pi_2:N\x N\to N$ be the projection on the second factor and set
\begin{equation}\label{e:ginj2}
\mathring\varphi:\mathring N\to\mathring N,\quad \mathring \varphi:=\pi_2\circ\WW^{-1}\circ \di G\circ\nu_G|_{\mathring N}.
\end{equation}
On the annulus $\A''$, we set
\[
\varphi_{\A''}:\A''\to\A',\qquad \varphi_{\A''}(r,\theta)=\big(R_G(r,\theta),\theta-G_\rho(R_G(r,\theta),\theta)\big).
\]
Thanks to \eqref{e:expinvwa}, the maps $\mathring \varphi$ and $\varphi_{\A''}$ glue together to yield $\varphi_G:N\to N$. We claim that $\varphi$ is exact. Indeed, from \eqref{e:checkmu} and \eqref{e:ginj2}, we get $\WW\circ\Gamma_{\mathring\varphi}=\di G\circ\mathring\nu$. Since $\nu_G$ and $\varphi_G$ are continuous up to the boundary, we deduce
\begin{equation}\label{e:varphiG}
\WW\circ\Gamma_{\varphi_G}=\di G\circ\nu_G.
\end{equation}
Repeating the computation as in \eqref{e:nueta}, it follows that $\varphi_G$ is exact with action
\begin{equation}\label{e:sigmavarphiG}
\sigma_{\varphi_G}:=G\circ\nu_G+K\circ\Gamma_{\varphi_G}.
\end{equation}
Therefore, we have a map $\mathcal E:\mathbb V(\delta_*)\to\mathbb E$ defined by $\mathcal E(G)=\varphi_G$. We claim that this map is continuous. As before, we argue separately for $\mathring \varphi|_{N\setminus\A''}$ and $\varphi_{\A''}$. For the former map, the continuity follows since we have a control on the $C^2$-norm of $G$. For the latter map, it follows from the continuity of $G\mapsto R_G$ from the $\Vert\cdot\Vert_{\mathbb V}$-topology to the $C^1$-topology, which we have already established, the continuity of $G\mapsto G_\rho$ from the $\Vert\cdot\Vert_{\mathbb V}$-topology to the $C^1$-topology, which follows from Lemma \ref{lem:functions_g_h}, and the continuity of $(f_0,f_1)\mapsto f_0\circ f_1$ from the product $C^1$-topology into the $C^1$-topology. The claim is established. In particular, up to shrinking $\delta_*$, we can assume that $\mathcal E(\mathbb V(\delta_*))\subset\mathbb E(\epsilon_*)$. On the other hand, the existence of $\epsilon_{**}>0$ with the property that $\mathcal G(\mathbb E(\epsilon_{**}))\subset \mathbb V(\delta_*)$ is a consequence of the continuity of $\mathcal G$.

Next, we verify that $\mathcal G(\varphi_G)=G$. First, recalling that $\pi_N:\ta^*N\to N$, we see that\vspace{-5pt}
\[
\nu_{\varphi_G}\stackrel{\eqref{eq:dG}}{=}\pi_N\circ(\WW\circ\Gamma_{\varphi_G})\stackrel{\eqref{e:varphiG}}{=}\pi_N\circ(\di G\circ\nu_G)=(\pi_N\circ\di G)\circ\nu_G=\nu_G.
\]
Therefore, comparing \eqref{e:sigmavarphiG} with \eqref{e:G}, we get $\mathcal G(\varphi_G)=G_{\varphi_G}=G$.

Finally, let $\varphi\in\mathbb E(\epsilon_{**})$. We show that $\varphi=\mathcal E(G_\varphi)$. First, we get
\[
\nu_{\varphi}^{-1}|_{\mathring N}\stackrel{\eqref{eq:dG}}{=}\pi_1\circ \WW^{-1}\circ\di G_{\varphi}|_{\mathring N}\stackrel{\eqref{e:varphiG}}{=}\pi_1\circ \Gamma_{\varphi_{G_\varphi}}\circ \nu^{-1}_{G_\varphi}|_{\mathring N}=\nu_{G_\varphi}^{-1}|_{\mathring N}.
\]
By continuity, this implies $\nu_\varphi=\nu_{G_\varphi}$, and we arrive at\vspace{-5pt}
\[
\varphi|_{\mathring N}\stackrel{\eqref{eq:dG}}{=}\pi_2\circ \WW^{-1}\circ\di G_\varphi\circ\nu_\varphi|_{\mathring N}=\pi_2\circ \WW^{-1}\circ\di G_\varphi\circ\nu_{G_\varphi}|_{\mathring N}\stackrel{\eqref{e:varphiG}}{=}\pi_2\circ\Gamma_{\varphi_{G_\varphi}}|_{\mathring N}=\varphi_{G_\varphi}|_{\mathring N}.
\]
By continuity again, $\varphi=\varphi_{G_\varphi}=\mathcal E(G_\varphi)$ as required, and the proof is completed.
\end{proof}

\subsection{Quasi-autonomous diffeomorphisms}\label{sec:calabi}

In this subsection, we complete the proof of Theorem \ref{t:main} using arguments inspired by \cite[Remark 2.8]{ABHS15}.
We begin with the following well-known lemma whose proof can be found in \cite[Lemma 10.27]{MS98} and \cite[Proposition 2.6 \& 2.7]{ABHS15}.
\begin{lem}\label{l:sigmaqa}
Let $\varphi\in\mathbb E(\epsilon_*)$ be an exact diffeomorphism and let $\sigma:N\to\R$ denote its action. Suppose that there exists a differentiable path $t\mapsto\varphi_t$ in $\mathbb E(\epsilon_*)$ with $\varphi_0=\id_N$ and $\varphi_1=\varphi$. We write by $t\mapsto H_t\in\mathbb V$ the Hamiltonian associated with the path. There holds
\begin{equation*}
\sigma(q)=\int_0^1\Big[H_t+\lambda(X_t)\Big](\varphi_t(q))\,\di t=\int_0^1\big(t\mapsto \varphi_t(q)\big)^*\lambda+\int_0^1 H_t(\varphi_t(q))\,\di t, \quad\forall\, q\in N.
\end{equation*}
As a consequence, we have
\begin{equation*}
\int_N\sigma\,\di\lambda=2\int_0^1\Big(\int_NH_t\di \lambda\Big)\di t.\tag*{\qed}
\end{equation*}
\end{lem}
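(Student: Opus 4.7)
I would start by associating to the path $t\mapsto\varphi_t$ the corresponding path $t\mapsto\sigma_t$ of actions normalised as in \eqref{e:normalization_sigma}, so that $\sigma=\sigma_1$. By the very definition \eqref{e:defhamiltonian2} of the Hamiltonian, one has the pointwise identity
\[
\frac{\di\sigma_t}{\di t}=\big(H_t+\lambda(X_t)\big)\circ\varphi_t\quad\text{on }N,
\]
and integrating this in $t$ from $0$ to $1$ will recover $\sigma$, provided $\sigma_0$ vanishes identically. To check this, I would note that $\di\sigma_0=\varphi_0^*\lambda-\lambda=0$ forces $\sigma_0$ to be constant on the connected surface $N$, while the boundary normalisation together with $\Gamma_{\id_N}(q)=\mathfrak i_{\Delta_N}(q)$ and Proposition \ref{prop:neighborhood_map}.(iv) gives $\sigma_0|_{\p N}=K\circ\mathfrak i_{\Delta_N}|_{\p N}=0$, so $\sigma_0\equiv 0$. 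The alternative form of the first displayed identity will follow immediately from recognising $\lambda_{\varphi_t(q)}\big(X_t(\varphi_t(q))\big)=\lambda\big(\tfrac{\di}{\di t}\varphi_t(q)\big)$ as the integrand for pulling $\lambda$ back along the curve $t\mapsto\varphi_t(q)$.

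For the volume identity, I would apply Fubini and then use $\varphi_t^*\di\lambda=\di\lambda$, an immediate consequence of the exactness of $\varphi_t$, to eliminate the composition with $\varphi_t$ under each integral, reducing the task to proving
\[
\int_N\sigma\,\di\lambda=\int_0^1\int_N\big(H_t+\lambda(X_t)\big)\di\lambda\,\di t\qquad\text{together with}\qquad \int_N\lambda(X_t)\,\di\lambda=\int_N H_t\,\di\lambda.
\]
The second of these equalities is the core step, and I would handle it by integration by parts. On $\mathring N$, the two-dimensional identity $\lambda(X_t)\,\di\lambda=\lambda\wedge\iota_{X_t}\di\lambda$ combined with the Hamilton equation $\iota_{X_t}\di\lambda=\di H_t$ gives $\lambda(X_t)\,\di\lambda=\lambda\wedge\di H_t=H_t\,\di\lambda-\di(H_t\lambda)$, an equality that extends to $N$ by continuity; Stokes' theorem then produces the claim because the boundary term $\int_{\p N}H_t\lambda$ vanishes, $H_t\in\mathbb V$ being zero on $\p N$ by Definition \ref{d:normc2+}.

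The computations themselves are routine; the main conceptual point is the boundary bookkeeping. Without $\sigma_0\equiv 0$, the time-integral would determine $\sigma$ only up to an additive constant, and without $H_t|_{\p N}=0$, the Stokes step would leave an extra boundary term, breaking the identity. Both conditions are hard-wired into the normalisation of the action and the definition of $\mathbb V$, so the checks are quick, but they are the substantive ingredient that makes the lemma work; the rest is formal manipulation.
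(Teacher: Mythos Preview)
Your argument is correct. Note that the paper does not actually give a proof of this lemma: it declares the result well known, cites \cite[Lemma 10.27]{MS98} and \cite[Proposition 2.6 \& 2.7]{ABHS15}, and ends with a \qed. Your proof is precisely the standard one underlying those references: integrate the defining relation \eqref{e:defhamiltonian2} to get $\sigma-\sigma_0$ as a time-integral, kill $\sigma_0$ via the boundary normalisation, and for the Calabi identity use $\varphi_t^*\di\lambda=\di\lambda$ together with the two-dimensional identity $\lambda(X_t)\,\di\lambda=\lambda\wedge\di H_t$ and Stokes with $H_t|_{\p N}=0$. There is nothing to add.
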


We recall that, according to \cite{BP94}, a Hamiltonian path $t\mapsto H_t\in\mathbb V$, parametrised in some interval $I$, is called quasi-autonomous if there exist a minimiser $q_{\min}\in N$ and a maximiser $q_{\max}\in N$ independent of time, i.e. 
\[
\min_{N} H_t=H_t(q_{\min}),\qquad\max_{N} H_t=H_t(q_{\max}), \qquad\forall\, t\in I.
\]
A diffeomorphism $\varphi\in\mathbb E(\epsilon_*)$ is called quasi-autonomous, if there exists a differentiable path $t\mapsto\varphi_t\in\mathbb E(\epsilon_*)$ parametrised in $[0,1]$ with $\varphi_0=\id_N$, $\varphi_1=\varphi$, whose associated Hamiltonian $t\mapsto H_t\in\mathbb V$ is quasi-autonomous.

\begin{lem}\label{l:qa}
Let $\varphi\in\mathbb E(\epsilon_*)$ be quasi-autonomous with associated Hamiltonian $t\mapsto H_t$. The following implications hold:
\[
\begin{aligned}
\exists\, t_-\in[0,1],\ H_{t_-}(q_{\min})<0,&\qquad\Longrightarrow\qquad q_{\min}\in \Fix(\varphi)\cap \mathring N,\ \sigma(q_{\min})<0,\\
\exists\, t_+\in[0,1],\ H_{t_+}(q_{\max})<0,&\qquad\Longrightarrow\qquad q_{\max}\in \Fix(\varphi)\cap \mathring N,\ \sigma(q_{\max})<0.\\
\end{aligned}
\]
\end{lem}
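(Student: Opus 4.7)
\medskip
\noindent\textbf{Proof proposal.} The plan rests on three ingredients established earlier in the paper: the identity $H_t|_{\partial N}\equiv 0$ for any element $H_t$ of a Hamiltonian path in $\mathbb V$; the fact that at an interior critical point of $H_t$ the generating vector field vanishes, because $\iota_{X_t}\di\lambda=\di H_t$ and $\di\lambda$ is symplectic on $\mathring N$; and the integral representation of the action from Lemma \ref{l:sigmaqa}. I would first settle the first implication, for which all three ingredients are used substantively; the second implication, as literally worded, will turn out to hold vacuously.

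For the first implication, let $t\mapsto\varphi_t\in\mathbb E(\epsilon_*)$ be a path realising the quasi-autonomy of $\varphi$, with Hamiltonian $t\mapsto H_t\in\mathbb V$ and generating vector field $X_t$. Since $q_{\min}$ globally minimises $H_t$ for every $t$ and $H_t$ vanishes on $\partial N$, one has $H_t(q_{\min})\leq 0$ for all $t$. The hypothesis $H_{t_-}(q_{\min})<0$ is therefore incompatible with $q_{\min}\in\partial N$, so $q_{\min}\in\mathring N$. Being an interior global minimum of $H_t$, one has $\di_{q_{\min}}H_t=0$ for every $t$; since $\di\lambda$ is symplectic on $\mathring N$, the relation $\iota_{X_t}\di\lambda=\di H_t$ forces $X_t(q_{\min})=0$. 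Consequently $t\mapsto\varphi_t(q_{\min})$ is the constant curve $q_{\min}$, proving $\varphi(q_{\min})=q_{\min}$. Plugging $X_t(q_{\min})=0$ and $\varphi_t(q_{\min})=q_{\min}$ into Lemma \ref{l:sigmaqa} collapses the action integral to
\[
\sigma(q_{\min})=\int_0^1 H_t(q_{\min})\,\di t.
\]
The integrand is nonpositive for every $t$, and strictly negative at $t=t_-$; by continuity of $t\mapsto H_t(q_{\min})$ it is strictly negative on an open subinterval of $[0,1]$, so the integral is strictly negative, as claimed.

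For the second implication exactly as worded, I would observe that the symmetric argument is not even needed: since $q_{\max}$ is a global maximiser of each $H_t$ and $H_t$ vanishes on $\partial N$, we have $H_t(q_{\max})\geq 0$ for every $t\in[0,1]$. In particular the hypothesis $H_{t_+}(q_{\max})<0$ is never satisfied, so the implication is vacuously true and nothing further is required.

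The only genuinely substantive step is the passage from a one-instant strict inequality to a strict inequality of the time-integrated action, which requires continuity of $t\mapsto H_t(q_{\min})$; this is guaranteed by the differentiability of $t\mapsto\varphi_t$ together with the isomorphism $\ta_{\varphi_t}\mathbb E(\epsilon_*)\cong(\mathbb V,\Vert\cdot\Vert_{\mathbb V})$ of Lemma \ref{l:tev}. The remaining steps reduce to using the boundary-vanishing of $H_t$ and the symplecticity of $\di\lambda$ in $\mathring N$, both of which are immediate. I expect no serious obstacle beyond bookkeeping; the main conceptual point is that the boundary condition $H_t|_{\partial N}=0$ both locates $q_{\min}$ in the interior (for the first implication) and, symmetrically, forces $H_t(q_{\max})\geq 0$ (rendering the second implication vacuous as stated).
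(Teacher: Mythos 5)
Your proof of the first implication is exactly the paper's argument: the boundary condition $H_{t_-}|_{\p N}=0$ places $q_{\min}$ in $\mathring N$, the vanishing of $\di_{q_{\min}}H_t$ together with the symplecticity of $\di\lambda$ on $\mathring N$ gives $X_t(q_{\min})=0$ and hence $\varphi_t(q_{\min})=q_{\min}$, and Lemma \ref{l:sigmaqa} then collapses to $\sigma(q_{\min})=\int_0^1 H_t(q_{\min})\,\di t<0$. That part is correct and complete.

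Your treatment of the second implication is logically valid for the statement as literally printed, and your observation that $H_t(q_{\max})\geq 0$ always holds is correct --- but what you have actually detected is a sign typo in the lemma, not a genuinely vacuous clause. The intended second implication is the mirror statement
\[
\exists\, t_+\in[0,1],\ H_{t_+}(q_{\max})>0\quad\Longrightarrow\quad q_{\max}\in\Fix(\varphi)\cap\mathring N,\ \ \sigma(q_{\max})>0,
\]
which is what the second implication in \eqref{e:imply} (and hence the diastolic half of Theorem \ref{t:main}) requires: there one needs an interior fixed point with \emph{positive} action. The paper proves only the first implication and declares the second ``analogous,'' which only makes sense for the corrected signs. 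Since your argument for $q_{\min}$ transfers verbatim to $q_{\max}$ with all inequalities reversed ($H_t(q_{\max})\geq 0$ for all $t$, strict positivity on a subinterval around $t_+$, hence $\sigma(q_{\max})=\int_0^1 H_t(q_{\max})\,\di t>0$), you should state and prove that corrected version rather than dismiss the clause as vacuous; otherwise the downstream Corollary \ref{cor:implications} (which itself carries the same typo, $\sigma(q_+)<0$ in place of $\sigma(q_+)>0$) would be left without support.
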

\begin{proof}
We show only the first implication. Since $H_{t_-}(q_{\min})<0$ and $H_{t_-}|_{\p N}=0$, we deduce that $q_{\min}\in \mathring N$. Moreover, since $q_{\min}$ minimises $H_t$ for all $t\in[0,1]$, we see that $\di_{q_{\min}}H_t=0$. Since $\di\lambda$ is symplectic on $\mathring N$, by $\iota_{X_t}\di\lambda=\di H_t$, we conclude that $X_t(q_{\min})=0$, which implies that $\varphi_t(q_{\min})=q_{\min}$. We estimate the action of $q_{\min}$ using Lemma \ref{l:sigmaqa} and remembering that, for all $t\in[0,1]$, there holds $H_t(q_{\min})\leq 0$, since $H_t$ vanishes on the boundary:
\[
\sigma(q_{\min})=\int_0^1\big[H_t+\lambda(X_t)\big](\varphi_t(q_{\min}))\di t=\int_0^1 H_t(q_{\min})\di t<0.\qedhere
\]
\end{proof}
\vspace{3pt}
\begin{prp}\label{p:qa}
Every $\varphi\in\mathbb E(\epsilon_{**})$ is quasi-autonomous.
\end{prp}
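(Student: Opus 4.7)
The plan is to exploit the star-shaped structure of $\mathbb V(\delta_*)$: given $\varphi\in\mathbb E(\epsilon_{**})$, I transport the problem to the generating function $G:=\mathcal G(\varphi)\in\mathbb V(\delta_*)$ and lift the radial segment $\{tG\}_{t\in[0,1]}$ in $\mathbb V(\delta_*)$ back to a path $\varphi_t:=\mathcal E(tG)$ in $\mathbb E(\epsilon_*)$. By Proposition \ref{prp:bijectivity_of_Xi}, this gives a well-defined path with $\varphi_1=\varphi$ and $\varphi_0=\id_N$ (since $\mathcal G(\id_N)=0$ and $\mathcal E$ inverts $\mathcal G$). The explicit construction of $\mathcal E$ via $\WW^{-1}$, $\di G$ and $\nu_G$, together with the polynomial dependence of $tG$ on $t$, shows that $(t,q)\mapsto\varphi_t(q)$ is jointly smooth, and hence this is a differentiable path of exact diffeomorphisms to which the Hamilton-Jacobi equation \eqref{e:hj} applies.

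The key observation is that, since $\frac{\di}{\di t}(tG)=G$ is independent of $t$, the Hamilton-Jacobi equation \eqref{e:hj} reads
\[
H_t\circ\varphi_t=G\circ\nu_t,\qquad\text{i.e.}\qquad H_t=G\circ\psi_t,
\]
where $\psi_t:=\nu_t\circ\varphi_t^{-1}$ is a diffeomorphism of $N$ that preserves $\partial N$ (both $\varphi_t$ and $\nu_t$ send $\partial N$ to $\partial N$, the latter because $\nu_t|_{\partial N}=\id_{\partial N}$). Thus $H_t$ is, at each time, a reparametrisation of the single function $G$, and in particular $\min_N H_t=\min_N G$ and $\max_N H_t=\max_N G$ are both independent of $t$.

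It remains to produce $t$-independent extremisers. Since $G|_{\partial N}=0$, we have $\min_N G\leq 0$. If $\min_N G<0$, the minimum is attained at some interior point $q_{\min}\in\mathring N$, which is automatically a critical point of $G$, hence of $tG$ for every $t\in[0,1]$ (trivially so at $t=0$). Proposition \ref{p:fixg} then gives $\varphi_t(q_{\min})=q_{\min}$ and $\nu_t(q_{\min})=q_{\min}$, so $\psi_t(q_{\min})=q_{\min}$ and
\[
H_t(q_{\min})=G(q_{\min})=\min_N G=\min_N H_t,\qquad\forall\, t\in[0,1].
\]
If instead $\min_N G=0$, then $G\geq 0$, so $H_t\geq 0$ everywhere; since $H_t\in\mathbb V$ vanishes on $\partial N$, any boundary point is a $t$-independent minimiser. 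The argument for $q_{\max}$ is symmetric, using that $\max_N G\geq 0$: if strict, a critical point of $G$ achieving the maximum works via Proposition \ref{p:fixg}; if zero, any boundary point works.

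I expect the main technical obstacle to be the justification that the path $t\mapsto\varphi_t$ is differentiable in a sense strong enough to yield a Hamiltonian $H_t$ belonging to $\mathbb V$ and to make the Hamilton-Jacobi equation \eqref{e:hj} legitimate. Given that \eqref{e:hj} is already established for differentiable paths and that all building blocks of $\mathcal E$ in Proposition \ref{prp:bijectivity_of_Xi} are smooth when the generating function is smooth, this should reduce to checking joint smoothness of $(t,q)\mapsto\varphi_t(q)$, but it is the one place where the soft topological properties of $\mathcal E$ recorded earlier do not immediately suffice and must be upgraded.
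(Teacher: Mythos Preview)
Your proof is correct and follows essentially the same route as the paper: define $\varphi_t:=\mathcal E(tG)$, apply the Hamilton--Jacobi equation \eqref{e:hj} with $\tfrac{\di}{\di t}(tG)=G$ to get $H_t=G\circ(\nu_t\circ\varphi_t^{-1})$, and then use Proposition \ref{p:fixg} at interior extremisers of $G$ (or the boundary when the extremum is zero) to produce $t$-independent extremisers of $H_t$. The paper's only cosmetic difference is that it splits cases by whether $q_{\min}\in\partial N$ or $q_{\min}\in\mathring N$ rather than by the sign of $\min G$, and it does not comment on the differentiability issue you flag at the end---your caution there is appropriate, though the paper simply takes joint smoothness of $(t,q)\mapsto\mathcal E(tG)(q)$ for granted from the explicit formulas in Proposition \ref{prp:bijectivity_of_Xi}.
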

\begin{proof}
By Proposition \ref{prp:bijectivity_of_Xi}, the generating function $G$ of $\varphi$ belongs to $\mathbb V(\delta_*)$. Thus, for all $t\in[0,1]$, the function $tG$ belongs to $\mathbb V(\delta_*)$, and again by Proposition \ref{prp:bijectivity_of_Xi}, we can consider the path $t\mapsto \varphi_t:=\mathcal E(tG)\in\mathbb E(\epsilon_*)$. Let $t\mapsto H_t$ be the associated Hamiltonian. By \eqref{e:hj}, we deduce
\begin{equation}\label{e:hj2}
G=\frac{\di}{\di t}(tG)=H_t\circ(\varphi_t\circ\nu_t^{-1}),\quad\forall\, t\in[0,1],
\end{equation}
which implies
\begin{equation}\label{e:hj3}
\min H_t=\min G,\qquad \max H_t=\max G,\qquad\forall\, t\in[0,1].
\end{equation}
Let $q_{\min}$ and $q_{\max}$ be the minimiser and the maximiser of $G$, respectively. We claim that
\begin{equation}\label{e:hj4}
G(q_{\min})=H_t(q_{\min}),\qquad G(q_{\max})=H_t(q_{\max}),\qquad\forall\, t\in[0,1].
\end{equation}
We give only the argument for $q_{\min}$. If $q_{\min}\in \p N$, we have $G(q_{\min})=0=H_t(q_{\min})$, as $G$ and $H_t$ belong to $\mathbb V$. If $q_{\min}\in\mathring{N}$, then $q_{\min}\in\Crit G$. We deduce that $\varphi_t(q_{\min})=q_{\min}=\nu_t(q_{\min})$, as $\varphi_t$ and $\nu_t$ act as the identity on $\mathring N\cap\Crit (tG)\supset\mathring N\cap\Crit G$ by Proposition \ref{p:fixg}. The equality $G(q_{\min})=H_t(q_{\min})$ follows then from \eqref{e:hj2}. Now that the claim is established, relations \eqref{e:hj3} and \eqref{e:hj4} imply that $t\mapsto H_t$ is quasi-autonomous.
\end{proof}
We are now ready to prove implications \eqref{e:imply} in Corollary \ref{c:neccond}, which are the last missing piece to establish the Main Theorem \ref{t:main}.
\begin{cor}\label{cor:implications}
Let $\varphi\in\mathbb E(\epsilon_{**})$ be an exact diffeomorphism with action $\sigma:N\to \R$. If $\varphi\neq\id_N$, the following implications hold:
\begin{align*}
\bullet&\quad \int_N\sigma\,\di\lambda\leq0\quad\Longrightarrow\quad \exists\, q_{-}\in\Fix(\varphi)\cap\mathring N\;\text{ with }\; \sigma(q_-)<0,\\
\bullet&\quad \int_N\sigma\,\di\lambda\geq0\quad\Longrightarrow\quad \exists\, q_{+}\in\Fix(\varphi)\cap\mathring N\;\text{ with }\; \sigma(q_+)<0.
\end{align*}
\end{cor}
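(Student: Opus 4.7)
The plan is to combine Proposition \ref{p:qa}, which asserts that $\varphi\in \mathbb E(\epsilon_{**})$ is quasi-autonomous via the path $t\mapsto \varphi_t:=\mathcal E(tG)$ with generating function $G=\mathcal G(\varphi)$, with the volume formula of Lemma \ref{l:sigmaqa} and the fixed-point/action estimate of Lemma \ref{l:qa}. From the proof of Proposition \ref{p:qa}, the associated Hamiltonian path $t\mapsto H_t\in\mathbb V$ satisfies $H_t(q_{\min})=\min_N G$ and $H_t(q_{\max})=\max_N G$ for all $t\in[0,1]$, where $q_{\min}$ and $q_{\max}$ denote a minimiser and a maximiser of $G$. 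The two implications will be proven by symmetric arguments, so I focus on the first.

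Assume $\int_N \sigma\,\di\lambda\leq 0$ and $\varphi\neq\id_N$. The last identity in Lemma \ref{l:sigmaqa} gives
\[
2\int_0^1\!\!\int_N H_t\,\di\lambda\,\di t \;=\;\int_N\sigma\,\di\lambda\;\leq\; 0.
\]
The key step is to show that $\min_N G<0$. Suppose by contradiction that $\min_N G\geq 0$. Then $H_t\geq H_t(q_{\min})=\min_N G\geq 0$ pointwise on $N$ for every $t\in[0,1]$. Since $\di\lambda$ is a positive area form on $\mathring N$ and $\p N$ has zero $\di\lambda$-measure, the inner integral is non-negative for each $t$, forcing $\int_N H_t\,\di\lambda=0$ for almost every $t\in[0,1]$. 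Because $H_t\geq 0$ is continuous on $N$, this implies $H_t\equiv 0$ for a.e.\ $t$. The associated vector field $X_t$ determined by $\iota_{X_t}\di\lambda=\di H_t$ (and its boundary extension via \eqref{e:defhamiltonianbound}) therefore vanishes for a.e.\ $t$, so integrating $\dot\varphi_t=X_t\circ\varphi_t$ from $\varphi_0=\id_N$ yields $\varphi=\varphi_1=\id_N$, a contradiction.

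Hence $\min_N G<0$, and consequently $H_t(q_{\min})=\min_N G<0$ for every $t\in[0,1]$. Taking any $t_-\in[0,1]$, the first implication of Lemma \ref{l:qa} yields $q_{\min}\in\mathrm{Fix}(\varphi)\cap\mathring N$ with $\sigma(q_{\min})<0$; this is the point $q_-$ claimed by the corollary. The second implication is obtained by running the same argument with reversed signs: from $\int_N\sigma\,\di\lambda\geq 0$ and $\varphi\neq\id_N$, one concludes $\max_N G>0$ (otherwise $H_t\leq 0$ everywhere and the integral identity forces $H_t\equiv 0$, hence $\varphi=\id_N$); then the second implication of Lemma \ref{l:qa} (read with the obvious sign change, consistent with the introduction) provides $q_+:=q_{\max}\in\mathrm{Fix}(\varphi)\cap\mathring N$ with $\sigma(q_+)>0$.

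The only delicate point is the passage ``$H_t\equiv 0$ for a.e.\ $t$ implies $\varphi=\id_N$'', which relies on the fact that the path $t\mapsto \mathcal E(tG)$ is in fact regular enough that the Hamilton--Jacobi identity \eqref{e:hj2} determines $H_t$ unambiguously from $G$ via composition with the diffeomorphism $\varphi_t\circ\nu_t^{-1}$; everything else is a direct application of the results established earlier in Section \ref{sec:generating}.
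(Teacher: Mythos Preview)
Your proof is correct and follows essentially the same route as the paper: both combine Proposition~\ref{p:qa}, Lemma~\ref{l:sigmaqa}, and Lemma~\ref{l:qa}, and derive a contradiction from the assumption that $H_t(q_{\min})\geq 0$ for all $t$. The only difference is in how the contradiction is reached: the paper argues directly that $\varphi\neq\id_N$ forces $H_s(w)>0$ at some $(s,w)$, which together with $H_t\geq 0$ makes $\int_0^1\!\int_N H_t\,\di\lambda\,\di t>0$; you instead start from the integral inequality, deduce $H_t\equiv 0$ (a.e., hence everywhere by continuity in $t$), and conclude $\varphi=\id_N$. These are contrapositive variants of the same argument, and your concern about the ``a.e.'' passage is unnecessary since $t\mapsto H_t$ is continuous. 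Your use of the explicit identity $H_t(q_{\min})=\min_N G$ from the proof of Proposition~\ref{p:qa} is also fine, though the paper only invokes quasi-autonomy abstractly.
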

\begin{proof}
The implications follow with $q_-=q_{\min}$, $q_+=q_{\max}$. We show only the former, the latter being analogous. Suppose that the integral of $\sigma$ is non-positive. By Proposition \ref{p:qa}, $\varphi$ is quasi-autonomous, namely, there exists a quasi-autonomous $t\mapsto H_t$ generating $t\mapsto \varphi_t$ with $\varphi_0=\id_N$ and $\varphi_1=\varphi$. By Lemma \ref{l:qa}, the corollary is established, if we show that there exists $t_-\in[0,1]$ such that $H_{t_-}(q_{\min})<0$. Indeed, assume by contradiction that $H_t(q_{\min})\geq 0$, for all $t\in[0,1]$. This means that $H_t\geq 0$. Furthermore, as $\varphi\neq\id_N$, there exists $(s,w)\in[0,1]\times N$ with $H_{s}(w)>0$, which, by Lemma \ref{l:sigmaqa}, implies
\[
0<\int_0^1\Big(\int_N H_t\Big)\di t=\frac12\int_N\sigma\di\lambda.
\]
From this contradiction we conclude the existence of a $t_-$ as above.  
\end{proof}

\bibliographystyle{amsalpha}
\bibliography{systolic_bib}
\end{document}